\numberwithin{equation}{section}
\newcommand{\sig}{\sigma}
\newcommand{\ra}{\rightarrow}
\newcommand{\ah}{\alpha}
\newcommand{\ben}{\begin{enumerate}}
\newcommand{\een}{\end{enumerate}}
\newcommand{\beq}{\begin{equation}}
\newcommand{\eeq}{\end{equation}}
\newcommand{\bs}{\begin{slide}}
\newcommand{\es}{\end{slide}}
\newcommand{\bc}{\begin{center}}
\newcommand{\ec}{\end{center}}
\newcommand{\lam}{\lambda}
\newcommand{\Lam}{\Lambda}
\newcommand{\Del}{\Delta}
\newcommand{\Gam}{\Gamma}
\newcommand{\gam}{\gamma}
\newtheorem{theorem}{Theorem}[section]
\newtheorem{lemma}[theorem]{Lemma}
\newtheorem{corollary}[theorem]{Corollary}
\theoremstyle{definition}
\newtheorem{define}{Definition}
\newtheorem{remark}[theorem]{Remark}
\def\E{\mathbb E}
\def\reals{\mathbb{R}}
\def\P{\Pr} 
\newcommand\COND{\ensuremath{\mathbf{COND}}}
\newcommand\CONDLG{\ensuremath{\mathbf{COND}(\Lambda,\Gamma)}}
\newcommand\xCOND{\mid\COND}
\newcommand\Var{{\operatorname{Var}}}
\newcommand\ii{\mathrm{i}}
\newcommand{\sumin}{\sum_{i=1}^n}
\newcommand\tL{{\widetilde L}}
\newcommand\hL{\widehat L}
\newcommand\dd{\,\mathrm{d}}
\newcommand\floor[1]{\lfloor#1\rfloor}
\newcommand\bigpar[1]{\bigl(#1\bigr)}
\newcommand\Bigpar[1]{\Bigl(#1\Bigr)}
\newcommand\biggpar[1]{\biggl(#1\biggr)}
\newcommand\lrpar[1]{\left(#1\right)}
\newcommand\bigsqpar[1]{\bigl[#1\bigr]}
\newcommand\Bigsqpar[1]{\Bigl[#1\Bigr]}
\newcommand\lrsqpar[1]{\left[#1\right]}
\newcommand\Bigabs[1]{\Bigl|#1\Bigr|}
\newcommand\qw{^{-1}}
\newcommand\qww{^{-2}}
\newcommand\qq{^{1/2}}
\newcommand\qqw{^{-1/2}}
\newcommand\xxx{^{(x)}}
\newcommand\bS{\overline S}
\newcommand\bSx{\bS\xxx}
\newcommand\chL{\check L}
\newcommand\inton{\int_0^n}
\newcommand\ono{(0,n)}
\newcommand\on{[0,n]}
\newcommand\OFP{(\gO,\cF,P)}
\newcommand\wick[1]{{:}#1{:}}
\newcommand\HH{H^{\wick2}}
\newcommand\xB{T}
\newcommand\BB{Q}
\newcommand\Too{T_\infty}
\newcommand\gLx{\gL^*}
\newcommand\set[1]{\ensuremath{\{#1\}}}
\newcommand\bigset[1]{\ensuremath{\bigl\{#1\bigr\}}}
\newcommand\ceil[1]{\lceil#1\rceil}
\newcommand\punkt{.\spacefactor=1000}
\newcommand\iid{i.i.d\punkt}
\newcommand{\sumko}{\sum_{k=0}^\infty}
\newcommand{\sumk}{\sum_{k=1}^\infty}
\newcommand{\prodk}{\prod_{k=1}^\infty}
\newcommand\bbR{\mathbb R}
\newcommand\bbZ{\mathbb Z}
\newcommand\ga{\alpha}
\newcommand\gb{\beta}
\newcommand\gd{\delta}
\newcommand\gD{\Delta}
\newcommand\gG{\Gamma}
\newcommand\kk{\kappa}
\newcommand\gl{\lambda}
\newcommand\gL{\Lambda}
\newcommand\go{\omega}
\newcommand\gO{\Omega}
\newcommand\gs{\sigma}
\newcommand\gss{\sigma^2}
\newcommand\eps{\varepsilon}
\newcommand{\tend}{\longrightarrow}
\newcommand\dto{\overset{\mathrm{d}}{\tend}}
\newcommand\eqd{\overset{\mathrm{d}}{=}}
\newcommand{\refT}[1]{Theorem~\ref{#1}}
\newcommand{\refC}[1]{Corollary~\ref{#1}}
\newcommand{\refL}[1]{Lemma~\ref{#1}}
\newcommand{\refR}[1]{Remark~\ref{#1}}
\newcommand{\refS}[1]{\S\ref{#1}}
\newcommand{\refApp}[1]{Appendix~\ref{#1}}
\newcommand\oi{\ensuremath{[0,1]}}
\newcommand\intoi{\int_0^1}
\newcommand\Bigparfrac[2]{\Bigpar{\frac{#1}{#2}}}
\newcommand\lhs{left-hand side}
\newcommand\rhs{right-hand side}
\newcommand\ooo{[0,\infty)}
\newcommand\ntoo{\ensuremath{{n\to\infty}}}
\newcommand\intoo{\int_0^\infty}
\newcommand\xfrac[2]{#1/#2}
\newcommand\xij{_{ij}}
\newcommand\eg{e.g\punkt}
\newcommand\norm[1]{\|#1\|}
\newcommand\innprod[1]{\langle#1\rangle}
\newenvironment{romenumerate}[1][-10pt]{
\addtolength{\leftmargini}{#1}\begin{enumerate}
 }{\end{enumerate}}
 \newcommand\ttoo{\ensuremath{{t\to\infty}}}
 \newcommand\etta{\boldsymbol1} 
 \newtheorem{example}[theorem]{Example}
\newcommand\Cov{\operatorname{Cov}}
\newcommand\Bin{\operatorname{Bin}}
 \newcommand\nnn[1]{n_{#1}} 
 \newcommand\cA{\mathcal{A}} 
 \newcommand\cD{\mathcal{D}} 
 \newcommand\cE{\mathcal{E}} 
 \newcommand\cF{\mathcal F}
 \newcommand\WL{\hat L} 
 \newcommand\XV{V} 
 \newcommand\YW{W} 
 \newcommand\Ga{G_{\ga}} 
 \newcommand\Gao{H_{\ga}} 
 \newcommand\Br{Br} 
 \newcommand\gDx{\hat\gD_k} 
 \newcommand\gDp[1]{\gD(#1)} 
 \newcommand\gDnt{\gD_{\floor{2nt}}} 
 \newcommand\taue{\tau_{\eps}} 
 \newcommand\PrB{\widehat{\Pr}} 
 \newcommand\PrBx[1]{\PrB\bigsqpar{#1}} 
 \newcommand\PrBX[1]{\PrB\Bigsqpar{#1}}
 \newcommand\Prz[1]{\Pr[#1]}  
 \newcommand\PrX[1]{\Pr\Bigsqpar{#1}} 
 \newcommand\Doo{\cD[0,\infty)} 
 \newcommand\as{a.s.\spacefactor=1000} 
 \newcommand\pdf{probability density function} 
 \newcommand\ArXiv[1]{\url{ArXiv:#1}}
 \newcommand\cg{\theta} 
\begin{document}

\title{Preferential Attachment when Stable}
\author{Svante Janson}
\address{Department of Mathematics, Uppsala University, PO Box 480,
SE-751~06 Uppsala, Sweden}
\email{svante.janson@math.uu.se}

\author{Subhabrata Sen}
\address{Microsoft Research NE and Department of Mathematics, Massachusetts Institute of Technology, 
77 Massachusetts Avenue, Cambridge, MA- 02139, USA.}
\email{ssen90@mit.edu}
\author{Joel Spencer}
\address{ Department of Computer Science and Department of Mathematics, Courant Institute, New York University, 
Room 829, 251 Mercer St.
New York, NY 10012, U.S.A.}
\email{spencer@cims.nyu.edu}
\date{May 27, 2018}

\thanks{SJ was partly supported by the Knut and Alice Wallenberg Foundation}

\subjclass[2010]{Primary: 60F10, 60F17, Secondary:60C05}
\keywords{urn model, large deviations}

\begin{abstract}
We study an urn process with two urns, initialized with a ball each. Balls are added sequentially, the urn being chosen independently with probability proportional to the $\alpha^{th}$ power $(\alpha >1)$ of the existing number of balls. We study the (rare) event that the urn compositions are balanced after the addition of $2n-2$ new balls.  We derive precise asymptotics of the probability of this event by embedding the process in continuous time. Quite surprisingly, a fine control on this probability may be leveraged to 
derive a lower tail Large Deviation Principle (LDP) for $L = \sum_{i=1}^{n} \frac{S_i^2}{i^2}$, where $\{S_n : n \geq 0\}$ is a simple symmetric random walk started at zero. We provide an alternate proof of the LDP via coupling to Brownian motion, and subsequent derivation of the LDP for a continuous time analogue of $L$. Finally, we turn our attention back to the urn process conditioned to be balanced, and provide a functional limit law describing the trajectory of the urn process.  
\end{abstract}

\maketitle


\section{Model and Summary of Results}\label{sec:model}

Consider two urns, each of which initially has one ball.  Balls come
sequentially.  When there are $i$ balls in the first urn and $j$ balls
in the second urn the next ball is placed in the first urn with probability
$i^{\ah}/(i^{\ah}+j^{\ah})$ and into the second urn with probability
$j^{\ah}/(i^{\ah}+j^{\ah})$.  Here $\ah$ is a constant.  In this work
we shall consider only $\ah > 1$, though we note that the case $\ah=1$
is the classic P\'olya Urn Model; see Remark \ref{Ralpha}.  
Equivalently, we define a Markov Chain
$\{(X_n,Y_n): n \geq 2\}$
with states $\mathbb{N}\times \mathbb{N}$ and intial state
$(X_2,Y_2)=(1,1)$, and 
transition probabilities 
\begin{align}
\P[(X_{n+1} , Y_{n+1} ) = (i+1, j) \mid (X_n, Y_n) = (i,j) ] 
&= \frac{i^{\alpha}}{i^{\alpha}+ j^{\alpha}},
\label{mc1}
\\
\P[ (X_{n+1}, Y_{n+1}) = (i, j+1) \mid (X_n, Y_n) = (i,j) ] 
&= \frac{j^{\alpha}}{i^{\alpha}+ j^{\alpha}}.
\label{mc2}
\end{align}
(We have chosen the notation such that $X_n+Y_n=n$; we thus start with
$n=2$.)

In this model the rich get richer.  It is highly unstable, as demonstrated
by Theorems \ref{t2} and \ref{t3} below.  We examine the (rare) event that
the urn populations remain stable.  For definiteness we concentrate on the
event that state $(n,n)$ is reached, i.e., $(X_{2n},Y_{2n})=(n,n)$,
which we denote $BINGO(n,n)$.  Continuous time, as described in 
\S\ref{sec:cont_time}, is a powerful method which yields ``Book Proofs" of
asymptotic stability probabilities.  

In an apparently unrelated direction, 
let $\xi_1,\ldots,\xi_n=\pm 1$, uniformly and independently, and set
$S_t = \sum_{i=1}^t \xi_i$.  That is, $S_t$ is the position of the
standard simple random walk at time $t$.  We set
\beq\label{1a}
L = L_n = \sum_{i=1}^n \frac{S_i^2}{i^2}. \eeq
A sequence  $\xi_1,\ldots,\xi_{2n-2}=\pm 1$ corresponds to a path
from $(1,1)$, with $\pm 1$ representing horizontal and vertical
moves respectively.  
(This is the case $\ah=0$ of the process defined above, when all steps are
independent.) 
$L$ is then a key statistic for measuring
how far the path strays from the main diagonal.  
As $\E[S_i^2]=i$, 
\begin{equation}
  \label{eq:EL}
  \E[L_n]=\sum_{i=1}^n i^{-1} = \ln n + O(1),
\end{equation}
and, as will be seen later, $L_n$ is typically about $\ln n$.
Our main 
concern is with the lower tail of the distribution of $L_n$, and we prove
the following.
(See \refT{TU} for a corresponding result for the upper tail.)
\begin{theorem}\label{t1}
For any fixed $c \in (0,1)$,
\beq\label{2a} \Pr[L_n \leq c\ln n] = e^{-(K(c)+o(1))\ln n}  \eeq
with
\beq\label{3a} K(c) =\frac{(1-c)^2}{8c}  .
\eeq
\end{theorem}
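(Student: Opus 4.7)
The plan is to use a Radon--Nikodym identity between the urn law $\P_\alpha$ and the independent-step law $\P_0$ (the $\alpha=0$ case) on walks from $(1,1)$ of length $2n-2$. Combined with the sharp asymptotic for $\P_\alpha[BINGO(n,n)]$ supplied by the continuous-time embedding, this will give the log-moment-generating function of $L_n$, from which G\"artner--Ellis plus a Legendre transform will produce $K(c)=(1-c)^2/(8c)$.

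\textbf{Step 1 (density identity).} With $\xi_j\in\{\pm1\}$ the walk increments and $a_j:=S_{j-1}/(j+1)$, the $j$-th urn step chooses urn~$1$ with probability $p_j=(1+a_j)^\alpha/[(1+a_j)^\alpha+(1-a_j)^\alpha]$ under $\P_\alpha$ versus $1/2$ under $\P_0$, so $d\P_\alpha/d\P_0=\prod_j(1+\xi_j(2p_j-1))$. Taylor expansion gives $2p_j-1=\alpha a_j+O(a_j^3)$, whence
\begin{equation*}
\log\frac{d\P_\alpha}{d\P_0}=\alpha\sum_{j=1}^{2n-2}\xi_j a_j-\frac{\alpha^2}{2}\sum_{j=1}^{2n-2}a_j^2+R_n,\qquad R_n=\Op(1).
\end{equation*}
Using $2\xi_j S_{j-1}=S_j^2-S_{j-1}^2-1$ and Abel summation, the first sum telescopes; on $BINGO(n,n)$ the boundary term $S_{2n-2}^2/(2n-1)$ vanishes, leaving
\begin{equation*}
\log\frac{d\P_\alpha}{d\P_0}\bigg|_{BINGO(n,n)}=-\frac{\alpha(\alpha-1)}{2}L_{2n-2}-\frac{\alpha}{2}\ln n+\Op(1).
\end{equation*}

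\textbf{Step 2 (log-MGF of $L_n$).} Taking $\P_0$-expectation of the display, using the continuous-time asymptotic $\P_\alpha[BINGO(n,n)]\asymp n^{-\alpha}$ together with the local CLT $\P_0[BINGO(n,n)]\sim(\pi n)^{-1/2}$, and setting $\lambda:=\alpha(\alpha-1)/2$, one gets
\begin{equation*}
\E_0\bigl[e^{-\lambda L_{2n-2}}\mid BINGO(n,n)\bigr]=n^{(1-\alpha)/2+o(1)}.
\end{equation*}
Two technical reductions then convert this into the unconditional MGF of $L_n$: (i) the tail $L_{2n-2}-L_n=\sum_{i=n+1}^{2n-2}S_i^2/i^2$ has mean $O(1)$ and concentrates on $BINGO$, so $L_{2n-2}$ may be replaced by $L_n$; (ii) conditioning on $BINGO$ contributes at most an $n^{o(1)}$ factor to the MGF, verified by splitting on $S_n$ and applying the local CLT $\P_0[S_{2n-2}=0\mid S_n]\sim\sqrt{2/(\pi n)}\,e^{-S_n^2/(2n)}$. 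As $\alpha$ ranges over $(1,\infty)$, $\lambda$ ranges over $(0,\infty)$, and one obtains
\begin{equation*}
\Lambda(\lambda):=\lim_{n\to\infty}\frac{1}{\ln n}\log\E_0[e^{-\lambda L_n}]=\frac{1-\sqrt{1+8\lambda}}{4},\qquad\lambda\ge0.
\end{equation*}

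\textbf{Step 3 and main obstacle.} Since $\Lambda$ is smooth and strictly concave on $[0,\infty)$, the G\"artner--Ellis theorem gives a lower-tail LDP for $L_n/\ln n$ with rate function $K(c)=\sup_{\lambda\ge0}(-\lambda c-\Lambda(\lambda))$. The first-order condition $c=1/\sqrt{1+8\lambda}$ gives $\lambda^\ast=(1-c^2)/(8c^2)$, which substitutes back to $K(c)=(1-c)^2/(8c)$, proving~\eqref{2a}. The technical heart of the argument is Step~2(ii): both $BINGO(n,n)$ and $\{L_n\le c\ln n\}$ are polynomially rare events that involve essentially different time windows of the walk, so a careful local-CLT comparison is required to ensure the conditioning bias leaks into the $(S_1,\dots,S_n)$-marginal only at an $n^{o(1)}$ scale. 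Controlling the Taylor remainder $R_n$ uniformly on $\{L_n\le c\ln n\}$ is a far milder issue and is automatic because smallness of $L_n$ forces the $a_j$ to be small.
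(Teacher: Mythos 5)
Your proposal follows essentially the same route as the paper's first (combinatorial) proof: your Radon--Nikodym density $d\P_\alpha/d\P_0$ is exactly the paper's $BASE\cdot FIT$ decomposition of $\Pr[\mathbf{P}]$ in \eqref{12b}--\eqref{13b}, your Abel-summation identity is the paper's $\ln FIT\sim-\lam L^{main}$ computation culminating in \eqref{18b}, and your Step~3 (G\"artner--Ellis with the rate function $K(c)=(1-c)^2/8c$) is \eqref{LDP0new}--\eqref{gLxnew}. The log-MGF formula you derive checks out against \refT{ta1}: with $\lam=\ah(\ah-1)/2$ one has $\sqrt{1+8\lam}=2\ah-1$, giving $\Lam(-\lam)=(1-\ah)/2$, and the Legendre transform recovers $K$.

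However, two of your ``technical'' steps conceal the real work, and as stated they would not go through.

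First, your claim that ``controlling the Taylor remainder $R_n$ uniformly on $\{L_n\le c\ln n\}$ \ldots is automatic because smallness of $L_n$ forces the $a_j$ to be small'' is not correct. The constraint $L_n\le c\ln n$ bounds the weighted \emph{sum} $\sum S_i^2/i^2$, not individual ratios, and for small $j$ the quantity $a_j=S_{j-1}/(j+1)$ is $\Theta(1)$ with constant probability (e.g.\ $|a_3|=|S_2|/4\in\{0,1/2\}$). Moreover, you are computing an expectation under $\P_0$, where no such conditioning is in place. This is precisely why the paper introduces the cutoff $A=\lfloor\ln^{10}n\rfloor$ in \eqref{defA}, works with $L^{main}$ and $BINGO(A,A;n,n)$ rather than with $L$ and $BINGO(n,n)$, uses the normal/weird dichotomy $|S_i|\lessgtr i^{0.99}$ for $i\ge A$ to justify the Taylor step uniformly, and handles $L^{init}$ separately via \eqref{20b}--\eqref{22b}. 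Without this cutoff the Taylor expansion underlying \eqref{10.5a} breaks down on a set of non-negligible $\P_0$-measure.

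Second, your Step~2(ii) — removing the $BINGO$ conditioning at the MGF level via a local CLT — is where the paper spends most of its effort, and it goes about it in a genuinely different order: it applies G\"artner--Ellis to the \emph{conditional} MGF (\refT{ta1}) to get the conditional LDP (\refT{t4}, \refT{e23}), and only then moves from conditional to unconditional at the probability level. The lower bound \eqref{2alower} comes from summing over conditioning slices $\COND(\Lam,\Gam)$ (\refT{e23near}, using \refC{Cbingo0}). The upper bound requires an explicit chain of path-reflection couplings (Theorems~\ref{firstcoupling}--\ref{couplingthm3}), which show that conditioning on being near the diagonal can only make $L$ stochastically smaller. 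Your local-CLT heuristic $\P_0[S_{2n-2}=0\mid S_n]\sim\sqrt{2/\pi n}\,e^{-S_n^2/2n}$ indicates the tilting by $BINGO$ is ``Gaussian in $S_n$,'' but to conclude the MGFs differ by $n^{o(1)}$ you still need to bound the contribution of $|S_n|\gg\sqrt n$ to $\E_0[e^{-\lam L_n}]$, which is not a free consequence of $L_n\ge S_n^2/n^2$. The paper's coupling route avoids this quantitative comparison entirely, which is likely why it was chosen.
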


The variable $L_n$ has an intriguing behavior.  
Parametrizing $S_i=\sqrt{i}N_i$,
the $N_i$ are asymptotically (in $i$) standard Gaussian and $L=\sum N_i^2/i$.
The harmonic series suggests a logarithmic scaling, $t=\ln i$.
Note that under this scaling we have strong correlation when $t,t'$ are
close which fades as the distance increases.  
That is, $S_i,S_{i\lam}$ are closely
correlated when $\lam$ is close to one and have positive asymptotic
correlation for any fixed $\lam$, but that correlation approaches zero
as $\lam$ approaches infinity.  

We give two very different arguments for Theorem \ref{t1}.  In
\S \ref{sec:basic},\ref{sec:lower},\ref{sec:upper} we employ the 
Markov process $(X_n,Y_n)$ and the
continuous time argument for it in \refS{sec:cont_time} to derive the
Laplace transform 
of $L$ and from that deduce the large deviation Theorem \ref{t1}.
In \S \ref{sec:brownianapprox}--\ref{sec:brownian_analysis}, we provide a more traditional proof, which turns out to be quite challenging. We couple the random walk to a standard Brownian motion via the celebrated KMT coupling, and establish that it suffices to derive the corresponding LDP for a Brownian analogue of $L$. Subsequently, we derive the LDP by the general theory for quadratic functionals of Brownian motion. 

Theorem \ref{t1} establishes a rigorous lower tail Large Deviation Principle
(LDP)  for a quadratic functional of the Simple Random Walk. Large
deviations for non-linear functions of $\{\pm 1\}$ variables has been an
active research area in recent years. In a breakthrough paper,  Chatterjee
and Dembo \cite{chatterjee2016ldp} initiated a systematic study of LDPs of
non-linear functionals of $\{\pm 1\}$ variables. The theory was subsequently
extended by Eldan \cite{eldan}, and has been applied to numerous problems in
probability and combinatorics (see
e.g. \cite{bglz2017,bgsz2018,eldangross2017a, eldangross2017b, lz2017}). We
emphasize that Theorem \ref{t1} does not follow using the general theory
established in these prior works, and that our approaches are entirely
different.

In \refS{section:conditionallaw} we return to the
Markov process $(X_k,Y_k)$ defined above, conditioned on the rare event
$BINGO(n,n)$,
and 
examine the typical path from $(1,1)$
to $(n,n)$.  
We define $\gD_k:=X_k-Y_k$, so that
\begin{equation}\label{XYgD}
  (X_k,Y_k)=\Bigpar{\frac{k+\gD_k}2,\frac{k-\gD_k}2},
\qquad k\ge2,
\end{equation}
and define, for completeness, $\gD_0=\gD_1:=0$.
(For typographical reasons,
we sometimes write $\gD(k)$.) 
Note that the event $BINGO(n,n)$ can be written $\gD_{2n}=0$.
We provide a functional limit law which
shows that  conditioned on $BINGO(n,n)$,
$\gD_k$ is typically of order $\sqrt n$ for $2<k<2n$, and that suitably
rescaled, $(\gD_k)_2^{2n}$ converges to a distorted Brownian bridge.
\begin{theorem}\label{Toi}
  Let $\Ga(t)$, $t\in\oi$, be the continuous Gaussian process with mean $0$
  and covariance function
  \begin{equation}\label{toi1}
    \Cov\bigpar{\Ga(s),\Ga(t)}
=\frac{2}{2\ga-1} s^\ga \bigpar{t^{1-\ga}-t^\ga}
,\qquad 0\le s\le t\le 1.
  \end{equation}
Then, 
as \ntoo, 
conditioned on $BINGO(n,n)$ (i.e.,  $\gD_{2n}=0$), 
  \begin{equation}\label{toi2}
    n\qqw \gD_{\floor{2nt}} 
\dto 
\Ga(t),
\qquad t\in\oi,
  \end{equation}
in $\cD\oi$ with the Skorohod topology. 
\end{theorem}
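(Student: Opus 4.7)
My plan is to work in the continuous-time embedding of \refS{sec:cont_time}, where the urn process is realized from two independent pure-birth chains with rate $i^\ga$ in state~$i$; there $BINGO(n,n)$ translates into an interleaving condition on the jump times, and the conditional Laplace-transform machinery developed in earlier sections becomes available. I would prove (a) convergence of finite-dimensional distributions to those of $\Ga$, and (b) tightness in $\cD\oi$.

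For the finite-dimensional distributions, I would first record the drift-diffusion expansions $\E[\gD_{k+1}-\gD_k\mid\gD_k]=\ga\gD_k/k+O\bigpar{(\gD_k/k)^3}$ and $\Var(\gD_{k+1}-\gD_k\mid\gD_k)=1-O\bigpar{(\gD_k/k)^2}$, valid while $|\gD_k|\ll k$. These suggest the formal linear SDE $dZ_t=(\ga/t)Z_t\,dt+\sqrt 2\,dB_t$, whose solution satisfies $Z_t=(t/s)^\ga Z_s+\sqrt 2\int_s^t (t/u)^\ga\,dB_u$ for $0<s\le t$. Conditioning on $Z_1=0$ via Doob's $h$-transform produces a Gaussian bridge whose covariance, computed as the limit of $K(s,t)-K(s,1)K(1,t)/K(1,1)$ with a vanishing regularizer of the initial condition, matches \eqref{toi1} on the nose. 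To make this rigorous I would express the conditional finite-dimensional density of $\bigpar{n\qqw\gD_{\floor{2nt_j}}}_{j=1}^m$ as a ratio of unconditional densities, with denominator $\Pr[BINGO(n,n)]$ known from the continuous-time analysis, and apply a multidimensional local CLT to the $\pm1$ increments along the prescribed timeline.

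For tightness in $\cD\oi$, I would use a standard fourth-moment increment criterion. The essential preliminary step is an a priori confinement bound: since the unconditioned process is highly unstable (cf.\ \refT{t2} and \refT{t3}), the estimates must be carried out under the conditional law. Using the Laplace-transform estimates of \refS{sec:basic}--\refS{sec:upper}, I would first establish the crude conditional bound $\max_{k\le 2n}|\gD_k|=\Op(n\qq)$; then on the event $\{\max_k|\gD_k|\le An\qq\}$ (for large constant $A$) the local drift is $O(A/\sqrt n)$ and the one-step increments are essentially symmetric, so moment bounds on increments of $n\qqw\gD_{\floor{2n\cdot}}$ become routine.

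The main obstacle will be executing the conditional local CLT uniformly, because the Radon--Nikodym density between the conditional and unconditional laws is very large on typical unconditional paths, so ratios of densities on the good event of moderate excursions must be controlled with care. The endpoint behaviour also requires attention: the drift $\ga/t$ is singular at $t=0^+$, while the bridge condition $\Ga(1)=0$ is active at $t=1$. Both are naturally handled by working directly with the $h$-transformed (bridge) process and by exploiting the explicit Gaussian form of the target covariance~\eqref{toi1}, which has the product structure $f(s)g(t)$ characteristic of a Gaussian Markov bridge.
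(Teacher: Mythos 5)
Your heuristic picture is correct and in places gives a genuinely different derivation of the target law: the drift computation $\E[\gD_{k+1}-\gD_k\mid\gD_k]\approx\ga\gD_k/k$, the diffusion limit $dZ_t=(\ga/t)Z_t\,dt+\sqrt2\,dB_t$, and the $h$-transform bridge covariance (with a vanishing lower cutoff to regularize the singular drift at $0$) do reproduce \eqref{toi1}. The paper instead verifies \eqref{toi1} directly against the representations \eqref{G-Br} and \eqref{G-B1}--\eqref{G-B2}, so your SDE route is a welcome alternative way to ``discover'' the answer. However, as a plan for a rigorous proof, there are two real gaps.

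First, the finite-dimensional step. By the Markov property, the conditional joint law factors into a product of $BINGO(\,\cdot\,;\,\cdot\,)$ probabilities (this is exactly \eqref{b3} in the paper), so what you need is precisely a uniform local CLT for the transition probabilities of the $\gD$-chain. Your proposal to ``apply a multidimensional local CLT to the $\pm1$ increments along the prescribed timeline'' does not supply this: the increments of $\gD_k$ are $\pm1$ with state-dependent probabilities, and the only way to reduce to i.i.d.\ $\pm1$ increments is through the $BASE\cdot FIT$ change of measure, whose density is essentially $e^{-\lam L}$ and therefore exponentially large/small on the paths you need to compare. Controlling that ratio uniformly in the arguments of the local CLT is exactly the ``main obstacle'' you flag but do not resolve. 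The paper resolves it by a different mechanism entirely: the continuous-time embedding linearizes the problem, so that $BINGO(a,b;c,d)$ becomes a statement about a sum of \emph{independent} exponentials and the local CLT (\refT{Tbingo}, \refC{Cbingo0}, \refC{Cbingo2}) is proved by characteristic-function estimates. You should either invoke that machinery or supply an argument that bounds the change-of-measure ratio; as written, the step is missing.

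Second, the tightness step. You propose to get the a priori confinement $\max_{k\le 2n}|\gD_k|=\Op(n^{1/2})$ from the Laplace-transform estimates for $L$ of \refS{sec:basic}--\refS{sec:upper}. That does not work at the required scale: those estimates control the weighted $\ell^2$ quantity $L=\sum S_i^2/i^2$, not $\max_k|\gD_k|$. The natural route --- bound $\Pr_{PA}[\max|\gD_k|>M\sqrt n\mid BINGO]$ by $\E_{COND}[FIT]^{-1}\,\Pr_{COND}[\max|\gD_k|>M\sqrt n]$ and use $FIT\le1$ --- pays a factor $n^{(\ga-1)/2+o(1)}$ from $\E_{COND}[FIT]^{-1}$, which forces $M\gtrsim\sqrt{\ln n}$ and only gives confinement at scale $\sqrt{n\ln n}$. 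The paper obtains confinement at the right scale $\sqrt n$ by a stopping-time decomposition (\refL{LSB}): after the first time $|\gD_k|\ge K\sqrt n$, the conditional probability of returning to the diagonal is shown to carry the Gaussian factor $e^{-cK^2}$ via the local CLT (\refC{Cbingo2} and \refL{LD}), not via $L$. Similar local-CLT-driven arguments (\refL{LSB0}, \refL{LSB1}) are what control the two singular endpoints. Finally, a plain fourth-moment increment criterion needs care for $\cD$-valued processes with jumps at the grid scale $1/n$: when $|u-t|<1/n$ the increment is a single jump and the moment bound degenerates; the paper sidesteps this with a modified Aldous criterion (\refL{LT}) restricted to $\gd\le u-t\le2\gd$. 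Your plan should address this range restriction explicitly.

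In short: correct limiting object and a nice alternative heuristic derivation, but the two load-bearing technical steps --- the uniform local CLT for the conditioned chain and the $O_p(\sqrt n)$ confinement --- are not supplied, and the specific tool you propose for confinement (the Laplace transform of $L$) does not yield the required bound at the right scale.
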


$\Ga(t)$ can be constructed from a standard Brownian bridge $\Br(t)$ as
\begin{equation}\label{G-Br}
  \Ga(t):=(\ga-1/2)\qqw t^{1-\ga}\Br\bigpar{t^{2\ga-1}},
\end{equation}
with $\Ga(0):=0$.
Related constructions from a  Brownian motion  are given in
\eqref{G-B1}--\eqref{G-B2}.

We give also a version of this theorem for $k=o(n)$.
Now a Brownian motion $B(t)$ appears instead of a Brownian bridge.

\begin{theorem}  \label{Too}
  Let $m_n\to\infty$ be real numbers with $m_n=o(n)$.
Then, 
as \ntoo, 
conditioned on $BINGO(n,n)$ (i.e.,  $\gD_{2n}=0$), 
  \begin{equation}\label{too}
    m_n\qqw \gD_{\floor{m_nt}} 
\dto 
\Gao(t):=(2\ga-1)\qqw t^{1-\ga} B\bigpar{t^{2\ga-1}},
\qquad t\in\ooo,
  \end{equation}
in $\Doo$ with the Skorohod topology. 
\end{theorem}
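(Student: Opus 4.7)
My strategy parallels \refT{Toi}, but adapted to the local scale $k=O(m_n)$ with $m_n=o(n)$. On this scale the conditioning on $BINGO(n,n)$ is asymptotically invisible at leading order, and the Brownian bridge in \refT{Toi} is replaced by a Brownian motion; the algebraic match is already visible at the level of covariance.

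\textit{Step 1 (identifying the limit as the small-scale limit of $\Ga$).} A direct computation shows that $\Gao$ defined in \eqref{too} is a centred Gaussian process on $\ooo$ with covariance
\begin{equation*}
\Cov\bigpar{\Gao(s),\Gao(t)} = \frac{(s\wedge t)^\ga (s\vee t)^{1-\ga}}{2\ga-1},
\end{equation*}
extending continuously to $t=0$ because $t^{1-\ga} B(t^{2\ga-1}) = O(t\qq \sqrt{\log\log(1/t)})$ as $t\downarrow 0$ by the Brownian LIL. Crucially, this covariance is the small-scale asymptote of \eqref{toi1}: substituting $s=\epsilon u$, $t=\epsilon v$ (with $u\le v$ fixed) into \eqref{toi1} and setting $\epsilon:=m_n/(2n)\to 0$ gives
\begin{equation*}
(2\epsilon)\qw \Cov\bigpar{\Ga(\epsilon u),\Ga(\epsilon v)}
= \frac{1}{2\ga-1}\Bigpar{u^\ga v^{1-\ga} - \epsilon^{2\ga-1} u^\ga v^\ga}
\longrightarrow \frac{u^\ga v^{1-\ga}}{2\ga-1}.
\end{equation*}
The prefactor $(2\epsilon)\qw = n/m_n$ is exactly the square of the ratio of the scalings $m_n\qqw$ in \eqref{too} and $n\qqw$ in \eqref{toi2}, so Theorems \refand{Toi}{Too} share one underlying covariance computation.

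\textit{Step 2 (finite-dimensional convergence).} Fix $0<t_1<\cdots<t_j$. We prove that, conditional on $BINGO(n,n)$,
\begin{equation*}
m_n\qqw \bigpar{\gD_{\floor{m_nt_1}},\dots,\gD_{\floor{m_nt_j}}} \dto \bigpar{\Gao(t_1),\dots,\Gao(t_j)}.
\end{equation*}
This is obtained by revisiting the joint Laplace transform (or characteristic function) computation driving \refT{Toi}: that argument yields an asymptotic expansion, valid under the conditional law, for the joint Laplace transform of $(n\qqw\gD_{\floor{2ns_i}})_i$, with a Gaussian limit of covariance \eqref{toi1}. Reading the same expansion at $s_i=m_nt_i/(2n)\to 0$, with test variables rescaled by $(n/m_n)\qq$, isolates the leading term identified in Step 1 and yields the Gaussian vector $(\Gao(t_i))_i$.

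\textit{Step 3 (tightness).} We establish tightness of $(m_n\qqw\gD_{\floor{m_n\cdot}})_n$ in $\cD[0,T]$ for each fixed $T>0$. Since $\gD_k$ has $\pm 1$ increments, the jumps of the rescaled process are of size $m_n\qqw\to 0$, and tightness reduces to a Kolmogorov-type moment bound on the conditional increments, e.g.\
\begin{equation*}
\E\Bigsqpar{(\gD_j-\gD_i)^4 \mid BINGO(n,n)} = O\bigpar{(j-i)^2},\qquad i<j\le m_nT,
\end{equation*}
which is obtained from the same Laplace transform machinery applied to the marginal of $\gD_j-\gD_i$.

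\textit{Step 4 (extension to $\Doo$).} Convergence in $\cD[0,T]$ for every $T>0$, with consistent limits given by the restrictions of $\Gao$, implies convergence in $\Doo$ with the Skorohod topology by a standard projective-limit argument.

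\textit{Main obstacle.} Step 2 is the substantive work: one must re-execute the Laplace transform analysis of \refT{Toi} in the ``zoomed-in'' regime $k=O(m_n)\ll n$, tracking error terms that were absorbed at the $O(n)$ scale to confirm they remain subleading after rescaling by $(n/m_n)\qq$. The covariance match established in Step 1 shows that this is not a coincidence but a genuine correspondence of covariance structures, after which Steps 3 and 4 are fairly routine.
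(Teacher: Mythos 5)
Your Step 1 is correct and is a nice sanity check on the limit process, and the overall plan---re-execute the Theorem~\ref{Toi} argument at the local scale $k=O(m_n)\ll n$---is right; the paper's own proof is exactly such a replay, substituting \refC{Cbingo0} for \refC{Cbingo2} in the estimate of the last $BINGO$ leg and setting $T_i:=t_i^{1-2\ga}$.

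However, Step 2 mischaracterizes the finite-dimensional convergence proof. \refL{Lfd} (and hence the present proof) does not manipulate a joint Laplace transform under the conditional law; it computes the joint probability mass function directly, via a Markov decomposition of $\{\gD(n_1)=k_1,\dots,\gD(n_m)=k_m\}\cap BINGO(n,n)$ into a product of independent $BINGO$ events $\cA_1,\dots,\cA_{m+1}$, and then substitutes the local CLT asymptotics of \refT{t3} and \refT{Tbingo} into each factor. Characteristic functions appear only inside the proofs of those $BINGO$ estimates, not at the level of the conditional $\gD$ process.

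The genuine gap is Step 3. You propose a Kolmogorov criterion via a conditional fourth-moment bound $\E[(\gD_j-\gD_i)^4\mid BINGO(n,n)]=O((j-i)^2)$, uniformly for $i<j\le m_nT$, and claim it follows from ``the same Laplace transform machinery.'' It does not: \refT{Tbingo} and its corollaries require a multiplicative gap $B\ge\cg A$ between the two time indices and give nothing as $j/i\to1$. This is precisely the obstruction the paper flags at the start of \refS{SS:tightness}, and it is what forced the authors to state and prove \refL{LT}, a variant of Aldous's criterion needing increment control only for $\gd\le u-t\le2\gd$, together with the stochastic-boundedness and small-time Lemmas~\ref{LSB} and~\ref{LSB0}. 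Moreover, $\gD_k$ conditioned on $BINGO(n,n)$ is not a martingale---it has a nontrivial, index-dependent drift---so the fourth-moment bound is not an off-the-shelf estimate and would need its own careful analysis, which you do not supply. You label Step 2 the main obstacle and call Step 3 ``fairly routine''; the paper's experience (see the explicit discussion before \refL{LT}: ``our proof of tightness is rather complicated, and uses several lemmas'') points the other way: finite-dimensional convergence here is a two-line substitution in \refL{Lfd}, while tightness is where the technical content lives.
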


In particular, taking $m_n$ integers and $t=1$, it follows that for any 
integers $m=m_n\to\infty$ with $m=o(n)$, conditioned on $BINGO(n,n)$,
  \begin{equation}\label{toox}
    m\qqw \gD_{m}
\dto N\Bigpar{0,\frac{1}{2\ga-1}}.
  \end{equation}
For $m<n$ with $m=\Theta(n)$, we obtain from \refT{Toi} a similar result
with a correction factor 
for the variance. Thus, $\gD_m$ is typically of order $\sqrt m$ for
$m<n$.

Under a suitable logarithmic scaling, we have in the limit a stationary
Ornstein--Uhlenbeck process, defined by \eqref{tou2} below.

\begin{theorem}\label{thm:oulimit}
Fix any sequence $t_n$ such that $t_n \to \infty$ and $\log n - \log t_n \to
\infty$. 
Then we have, as $n \to \infty$, conditional on $BINGO(n,n)$, 
\begin{equation}\label{tou1}
 e^{-(s+t_n)/2}\Delta\bigpar{\floor{e^{s+ t_n}} }
\dto Z(s),
\qquad -\infty < s <\infty,
\end{equation}
in $\mathcal{D}(-\infty, \infty)$ with the Skorokhod topology, 
where $Z(s)$ is a centered Gaussian process  with covariance function
\begin{equation}\label{tou2}
\E[Z(s) Z(t)] = \frac{1}{2\alpha-1} e^{- (\alpha - \frac{1}{2}) |s -t | },
\qquad s,t\in\bbR.  
\end{equation}
\end{theorem}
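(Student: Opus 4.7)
The plan is to derive \refT{thm:oulimit} as a direct corollary of \refT{Too} by means of a logarithmic change of time-scale, followed by identifying the resulting limit as the stationary Ornstein--Uhlenbeck process with the stated parameters.

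First, set $m_n := \floor{e^{t_n}}$. The assumption $t_n \to \infty$ gives $m_n \to \infty$, and the hypothesis on $t_n$ ensures $m_n = o(n)$, so \refT{Too} applies (conditionally on $BINGO(n,n)$) and yields
\begin{equation*}
m_n\qqw \gD_{\floor{m_n t}} \dto \Gao(t) = (2\ga-1)\qqw t^{1-\ga} B\bigpar{t^{2\ga-1}},
\qquad t\in\ooo,
\end{equation*}
in $\Doo$ with the Skorokhod topology.

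Next, perform the deterministic substitution $t = e^s$ and multiply through by $e^{-s/2}$. With $m_n = \floor{e^{t_n}}$, the indices $\floor{m_n e^s}$ and $\floor{e^{s+t_n}}$ differ by at most $1$ uniformly on compact $s$-intervals, which is negligible since $\gD$ changes by at most $1$ per step. Hence, up to a uniformly $o(1)$ error,
\begin{equation*}
e^{-(s+t_n)/2}\gD\bigpar{\floor{e^{s+t_n}}}
= e^{-s/2}\cdot m_n\qqw \gD_{\floor{m_n e^s}}.
\end{equation*}
The deterministic map $X \mapsto \bigpar{s\mapsto e^{-s/2} X(e^s)}$ sends $\cD(0,\infty)$ continuously into $\cD(-\infty,\infty)$ in the respective Skorokhod topologies: convergence in the latter space reduces to convergence on every compact subinterval of $\bbR$, and the exponential time change maps such a subinterval onto a compact subinterval of $(0,\infty)$ on which the transformation is a smooth time change together with a smooth multiplicative factor. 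The continuous mapping theorem then gives, conditional on $BINGO(n,n)$,
\begin{equation*}
e^{-(s+t_n)/2}\gD\bigpar{\floor{e^{s+t_n}}} \dto Z(s) := e^{-s/2}\Gao(e^s).
\end{equation*}

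It remains only to identify $Z$ with the stated process. Substituting the definition of $\Gao$ gives
\begin{equation*}
Z(s) = (2\ga-1)\qqw\, e^{-(\ga-1/2) s}\, B\bigpar{e^{(2\ga-1)s}},
\end{equation*}
which is manifestly a centered continuous Gaussian process. For $s\le t$, using $\E[B(u)B(v)]=\min(u,v)$,
\begin{equation*}
\E[Z(s)Z(t)] = \frac{1}{2\ga-1}\, e^{-(\ga-1/2)(s+t)}\, e^{(2\ga-1)s} = \frac{1}{2\ga-1}\, e^{-(\ga-1/2)(t-s)},
\end{equation*}
which by symmetry is exactly \eqref{tou2}; this is the standard construction of the stationary Ornstein--Uhlenbeck process as a time-changed Brownian motion. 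The main (and mild) technical obstacle is justifying continuous mapping on the Skorokhod space over the full real line, which is purely topological bookkeeping; all of the probabilistic substance is already contained in \refT{Too}.
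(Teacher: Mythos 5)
Your proof is correct and follows essentially the same route as the paper's: apply \refT{Too}, change variables $t=e^s$, multiply by $e^{-s/2}$, and compute the covariance from the Brownian representation. The only cosmetic difference is that the paper takes $m_n := e^{t_n}$ directly (which \refT{Too} permits, since $m_n$ may be real), thereby avoiding the small floor-discrepancy argument you supply.
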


\begin{remark}\label{Ralpha}
As said above, we consider in this paper only $\ga>1$, which is necessary
e.g.\ for \refT{t2}. However, it would be interesting to study also
$\ga\in\oi$,
when $(X_k,Y_k)$ behaves quite differently.
Note that $\ga=0$ yields $\gD_k$ as a simple random walk, and then it is
well-known that \eqref{toi2} holds with $G_0(t):=\sqrt2 \Br(t)$,
see e.g.\ \cite[Theorem 24.1]{Billingsley}. 
(The factor $\sqrt2$ is because of our choice of normalization.)
Furthermore, for $\ga=1$, when as said above $(X_k,Y_k)$ is the classical
P\'olya urn, it is well-known that the increments are exchangeable, and
thus, conditioned on $BINGO(n,n)$, all paths to $(n,n)$ have the same
probability. 
(This can be seen from \eqref{11a}--\eqref{13b} below, noting that for
$\ga=0$ or $\ga=1$, $FIT_i$ in \eqref{9a} is constant 1.)
I.e., conditioned on $BINGO(n,n)$, $\ga=1$ and $\ga=0$
coincide, and thus \eqref{toi2} holds for $\ga=1$ too,  with 
$G_1(t) =G_0(t)=\sqrt2 \Br(t)$. 
Note further that this agrees with \eqref{toi1} and \eqref{G-Br}
for $\ga=1$ (but not for
$\ga=0$). 
Similarly, \eqref{too} and \eqref{tou1}--\eqref{tou2} hold for $\ga=1$, with
$H_1(t)=B(t)$. 
It would be interesting to find an analogue of \refT{Toi} for
$0<\ga<1$. 
\end{remark}

\section{Continuous Time}
\label{sec:cont_time}
\noindent
We examine the Markov Chain defined in \S \ref{sec:model} \eqref{mc1}--\eqref{mc2} with
initial state $(1,1)$.

\begin{define}\label{defbingo}
$BINGO(i,j)$ denotes the event that state $(i,j)$ is reached.
\end{define}

This preferential attachment model is best attacked (see Remark \ref{rem1}) 
via  continuous
time.  Let $\XV_i,\YW_i$, $i\geq 1$, denote exponential distributions
with rate parameter $i^{\ah}$. That is, $\XV_i,\YW_i$ have \pdf{} 
$\lam e^{-\lam x}$
with $\lam=i^{\ah}$.  The $\XV_i,\YW_i$ are all chosen mutually independently. 
Begin the urn model, as before, with each urn having one ball.  Begin time
at zero.  When an urn has $i$ balls it waits time $\XV_i$ until it receives
its next ball. The forgetfulness property of the exponential distribution
(plus a little calculus) gives that when the bins have $i,j$ balls respectively
the probability that $\XV_i < \YW_j$ is 
$i^{\ah}/(i^{\ah}+j^{\ah})$ as desired.  
This leads to a remarkable theorem
(\cite{davis}, but see Remark \ref{rem1})
with what is surely a Proof from The Book.

\begin{theorem}\label{t2} With probability $1$ one of the bins gets all but
a finite number of the balls.
\end{theorem}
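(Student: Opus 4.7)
The plan is to use the continuous time embedding just introduced. The key observation is that, for $\alpha>1$, the explosion time of each urn is almost surely finite; independence and absolute continuity then force the two explosion times to differ, and the urn that explodes first receives all but finitely many of the balls in the discrete chain.

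Define
\[
T^X := \sum_{i=1}^\infty V_i,
\qquad
T^Y := \sum_{i=1}^\infty W_i.
\]
Since $V_i\sim\mathrm{Exp}(i^\alpha)$ has mean $i^{-\alpha}$, monotone convergence gives
\[
\E T^X = \sum_{i=1}^\infty i^{-\alpha} = \zeta(\alpha) < \infty,
\]
as $\alpha>1$, so $T^X<\infty$ a.s., and symmetrically $T^Y<\infty$ a.s. The families $(V_i)$ and $(W_j)$ are independent, hence so are $T^X$ and $T^Y$, and each has an absolutely continuous distribution (write $T^X = V_1 + (\text{independent remainder})$ and use convolution with an absolutely continuous factor). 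Consequently $\Pr[T^X = T^Y]=0$, and by symmetry it suffices to prove the conclusion on the event $\{T^X < T^Y\}$.

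On this event, the partial sums $W_1+\cdots+W_{k-1}$ strictly increase to $T^Y>T^X$, so
\[
N := \bigabs{\bigcpar{k\ge 2 : W_1+\cdots+W_{k-1} < T^X}}
\]
is a.s.\ finite. Every urn 1 ring $V_1+\cdots+V_{k-1}$ lies strictly below $T^X$, so the merged set of ring times contains infinitely many points in $[0,T^X)$, of which only $N$ come from urn 2. Sorting this set in increasing order produces $\tau_1<\tau_2<\cdots$ with $\tau_n\uparrow T^X$, and the discrete chain $(X_n,Y_n)$ is precisely what one obtains by taking the $n$-th ring to be the $n$-th discrete transition (memorylessness of the exponentials supplies the correct transition probabilities \eqref{mc1}--\eqref{mc2}). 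Since at most $N$ of the $\tau_n$ are urn 2 rings, $Y_n\le 1+N$ for every $n$, i.e., urn 2 receives only finitely many balls.

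The one delicate point is the bookkeeping around the accumulation at $T^X$: the urn 2 rings $W_1+\cdots+W_{k-1}$ with $k>N+1$ lie in $[T^X,T^Y)$ and are \emph{never} picked up by the coupled discrete chain. That this is consistent -- that no step of the infinite discrete chain is ``lost'' -- is immediate because urn 1 alone contributes infinitely many ring times in $[0,T^X)$, so the sorted sequence $(\tau_n)$ is already infinite and accounts for every $n\ge 2$. Apart from this bookkeeping the proof is indeed ``from The Book'': the convergence of $\sum i^{-\alpha}$ for $\alpha>1$ is the only analytic input.
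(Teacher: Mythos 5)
Your proof is correct and is essentially the same continuous-time argument as the paper's: both define the total explosion times $\sum_i V_i$ and $\sum_j W_j$, observe they are a.s.\ finite since $\sum i^{-\alpha}<\infty$ for $\alpha>1$, note that by absolute continuity (the paper says ``nonatomic'') they a.s.\ differ, and conclude that the urn whose explosion time comes first gets all but finitely many balls. Your write-up simply spells out more explicitly the bookkeeping about sorting ring times and why no discrete step is lost, which the paper compresses into one sentence.
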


\begin{proof}
Let $\XV=\sum_{i=1}^{\infty}\XV_i$,
$\YW=\sum_{i=1}^{\infty}\YW_i$.  As $\sum i^{-\ah}$ is finite
(here using that $\ah > 1$) both $X$ and $Y$ are finite a.s.  As
the distribution is nonatomic, $X\neq Y$ a.s.  Say $X < Y$.  Then
bin one receives all its ball before bin two does.  When bin
one has all its balls the process stops (a countable number of
balls have been placed) and bin two only has a finite number of
balls.  
\end{proof}

\begin{corollary}\label{cor1} 
\beq\label{4a} \lim_{M\ra\infty}\lim_{k\ra\infty}\sum_{i=M}^{k-M}\Pr[BINGO(i,k-i)]=0 \eeq
\end{corollary}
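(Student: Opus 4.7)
The plan is to rewrite the sum as the probability of a single event about the chain at time $k$, and then invoke \refT{t2} via a monotonicity argument.

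First I would observe that since $X_k+Y_k=k$ deterministically, the events $BINGO(i,k-i)$ for different values of $i\in\{1,\dots,k-1\}$ are pairwise disjoint, and $BINGO(i,k-i)=\{(X_k,Y_k)=(i,k-i)\}$. The sum over $M\le i\le k-M$ therefore collapses to
$$\sum_{i=M}^{k-M}\Pr[BINGO(i,k-i)]=\Pr[\min(X_k,Y_k)\ge M].$$

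Next I would note that each transition increments exactly one of $X_n,Y_n$ by $1$, so $\min(X_n,Y_n)$ is non-decreasing in $n$. Hence $\{\min(X_k,Y_k)\ge M\}$ is an increasing sequence of events in $k$, with union $\{L\ge M\}$, where $L:=\sup_n\min(X_n,Y_n)=\lim_n\min(X_n,Y_n)$. Continuity of measure then gives
$$\lim_{k\to\infty}\Pr[\min(X_k,Y_k)\ge M]=\Pr[L\ge M].$$

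Finally, \refT{t2} says that almost surely one of the two urns receives only finitely many balls in total, which forces $L<\infty$ a.s. Consequently $\Pr[L\ge M]\to 0$ as $M\to\infty$, yielding the claim. There is no substantial obstacle here; the corollary is essentially a reformulation of \refT{t2}, the only real step being the recognition that the given sum equals the probability that both urns contain at least $M$ balls at time $k$, after which monotonicity and the a.s.\ finiteness of $L$ finish the argument.
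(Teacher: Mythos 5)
Your proposal is correct and follows essentially the same route as the paper: both identify the sum as $\Pr[\min(X_k,Y_k)\ge M]$ via disjointness of the $BINGO(i,k-i)$, exploit monotonicity of this event in $k$ (the paper's "FENCE" argument), and then conclude from Theorem~\ref{t2} that the limiting probability as $M\to\infty$ is zero.
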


\begin{proof}
For $M$ fixed let $FENCE(k)$ denote the disjunction of the $BINGO(i,k-i)$
over $M\leq i\leq k-M$;
this is the event that at the time that there are $k$ balls in the urns,
there is at least $M$ balls in each urn.
As these $BINGO(i,k-i)$ are tautologically disjoint (a path can only
hit one state with a given sum of coefficients), $\Pr[FENCE(k)]$ is given by the sum in (\ref{4a}).
Tautologically $FENCE(k)$ implies $FENCE(k')$ for all $k'\geq k$ as once a path hits the fence at
$k$ it cannot escape the fence at $k'$.  Thus the disjunction of all $FENCE(k)$ has probability
$\lim_{k\ra\infty}$ of the sum.  But the disjunction is (again tautologically!) the event that
both bins eventually get at least $M$ balls.  From Theorem \ref{t2}, this has limiting value
(in $M$) of zero.  
\end{proof}

\begin{remark}\label{rem1} The use of continuous time appears to be due to Herman Rubin, as
attributed by Burgess Davis in \cite{davis}.  A thorough study of preferential attachment (in a
far more general setting) via continuous time was given in the Ph.D. thesis of Roberto Oliveira,
under the supervision of the senior author (JS).  Many of the results of Oliveira's thesis
are given in \cite{spenceroliveira}. 
Theorem \ref{t2} and Corollary \ref{cor1} provided the
orginal motivation for our current research. 
The senior author  searched for a combinatorial proof, appropriately counting paths with their respective
probabilities, for Corollary \ref{cor1}.  This in turn led to attempts to estimate
$BINGO(i,j)$ without using continuous time.  Somewhat surprisingly, one result is in
the opposite direction.  The estimates on $BINGO(i,j)$ given by continuous time have
given a quite roundabout argument for
the large deviation results for the random variable $L$ given by Theorem \ref{t1}.
\end{remark}

Continuous time gives us excellent asymptotics on $BINGO$.  We first provide the (tautological) bridge between
continuous time and $BINGO$.

\begin{theorem}\label{newj1}  Set
\beq\label{defdeltaeasy} 
\Delta := \sum_{s=1}^{i-1}\XV_i - \sum_{t=1}^{j-1}\YW_j  \eeq
Then $BINGO(i,j)$ occurs iff either $0 \le \Del < \YW_j$
or $0 \le -\Del < \XV_i$. 
\end{theorem}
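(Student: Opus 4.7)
The plan is to verify the identification by unpacking what $\Delta$ measures in the continuous-time embedding. Define $T^X_i := \sum_{s=1}^{i-1} \XV_s$ and $T^Y_j := \sum_{t=1}^{j-1} \YW_t$. Since urn~1 starts with one ball at time $0$ and waits an additional $\XV_k$ time units between having $k$ and $k+1$ balls, $T^X_i$ is precisely the (random) time at which urn~1 first contains $i$ balls, and symmetrically $T^Y_j$ is the time urn~2 first contains $j$ balls. Thus $\Delta = T^X_i - T^Y_j$ is the signed delay between those two arrivals. Moreover, urn~1 holds exactly $i$ balls throughout the half-open interval $I_1 := [T^X_i, T^X_i + \XV_i)$, and urn~2 holds exactly $j$ balls throughout $I_2 := [T^Y_j, T^Y_j + \YW_j)$.

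Next I would argue that $BINGO(i,j)$ is equivalent to $I_1 \cap I_2 \ne \emptyset$. The continuous-time construction produces a coupled version of the discrete chain $(X_n,Y_n)$: the jump times are a.s.\ distinct, and recording the joint urn-counts in the order the jumps occur yields a Markov chain with precisely the transition probabilities \eqref{mc1}--\eqref{mc2}, thanks to the memoryless property of exponentials together with the identity $\P[\XV_i < \YW_j] = i^{\ga}/(i^{\ga}+j^{\ga})$ already recorded in the paragraph defining $\XV_i,\YW_i$. Consequently, the state $(i,j)$ is hit by the discrete chain iff it is occupied by the continuous-time process at some instant, which happens iff $I_1$ and $I_2$ overlap.

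Finally I would split on the sign of $\Delta$. On $\{\Delta \ge 0\}$ the intervals overlap iff $T^X_i < T^Y_j + \YW_j$, i.e., $0 \le \Delta < \YW_j$; on $\{\Delta \le 0\}$ the symmetric condition is $0 \le -\Delta < \XV_i$. The boundary case $\Delta = 0$ occurs with probability zero since $T^X_i$ and $T^Y_j$ are independent with absolutely continuous distributions, so placing the equality in either case is harmless. There is no substantive obstacle here: as the authors indicate, this theorem is essentially a tautological bridge, and the only point requiring attention is the clean identification of $BINGO(i,j)$ with the overlap event, which is immediate once the discrete and continuous pictures are coupled as above.
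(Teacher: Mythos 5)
Your proof is correct and takes essentially the same route as the paper: identify $\Delta$ as the signed delay between the arrival of the $i$-th ball in urn one and the $j$-th in urn two, and observe that $BINGO(i,j)$ holds exactly when the (half-open) occupation intervals for states $i$ and $j$ overlap, which after a case split on the sign of $\Delta$ gives the stated conditions. The interval-overlap reformulation is a clean packaging of the paper's argument, and the extra care you take in spelling out the discrete/continuous coupling is already implicit in the paper's setup preceding Theorem~\ref{t2}.
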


\begin{proof}
$\Delta$ is the time difference between when urn one receives its $i$-th ball and
urn two receives its $j$-th ball.  Suppose $\Del \le 0$.  At time $T=\sum_{s=1}^{i-1}$ $\XV_i$ urn
one receives its $i$-th ball.  Urn two will receive its $j$-the ball at time $T-\Del$.  $BINGO(i,j)$
occurs when urn one has not yet received its $i+1$-st ball, which it does at time $T+\XV_i$.  This
occurs iff $\XV_i > -\Del$.  The case $\Del \ge 0$ is similar.  
\end{proof}

\begin{theorem}\label{t3}  There is a positive constant $\beta$, dependent only on $\ah$, so that
when $i,j\ra\infty$
\beq\label{5a}  \Pr[BINGO(i,j)] \sim \beta[i^{-\ah} + j^{-\ah}].  \eeq
In particular,
\beq\label{bingo}  \Pr[BINGO(n,n)] \sim 2\beta n^{-\ah} .  \eeq
\end{theorem}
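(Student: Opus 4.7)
The plan is to exploit the continuous-time description to turn $\Pr[BINGO(i,j)]$ into an expectation, from which the rate $i^{-\alpha}+j^{-\alpha}$ and the constant $\beta$ both emerge naturally. By \refT{newj1}, $BINGO(i,j)$ is the event $\Delta_{i,j}\in(-V_i,W_j)$, and since $\Delta_{i,j}=\sum_{s=1}^{i-1}V_s-\sum_{t=1}^{j-1}W_t$ uses none of the ``fresh'' variables $V_i\sim\mathrm{Exp}(i^\alpha)$ or $W_j\sim\mathrm{Exp}(j^\alpha)$, these three random variables are mutually independent. Conditioning on $\Delta_{i,j}$ and using $\Pr[V_i>x]=e^{-i^\alpha x}$ (and similarly for $W_j$), this yields
\[
\Pr[BINGO(i,j)]=\E\bigsqpar{\mathbf{1}_{\Delta_{i,j}>0}\,e^{-j^\alpha\Delta_{i,j}}}+\E\bigsqpar{\mathbf{1}_{\Delta_{i,j}\le0}\,e^{i^\alpha\Delta_{i,j}}}.
\]
Writing each expectation as an integral against the density $f_{i,j}$ of $\Delta_{i,j}$ and substituting $u=j^\alpha x$ resp.\ $v=-i^\alpha x$ produces
\[
\Pr[BINGO(i,j)]=j^{-\alpha}\!\int_0^\infty\! e^{-u}f_{i,j}(u/j^\alpha)\,du+i^{-\alpha}\!\int_0^\infty\! e^{-v}f_{i,j}(-v/i^\alpha)\,dv.
\]

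The proof of \refT{t2} shows that $X_\infty:=\sum_s V_s$ and $Y_\infty:=\sum_t W_t$ converge a.s.\ for $\alpha>1$, so $\Delta_{i,j}\to D:=X_\infty-Y_\infty$ almost surely as $i,j\to\infty$. The natural candidate is $\beta:=f_D(0)$, where $f_D$ denotes the density of $D$, and the key task is to show that the pre-factors in the displayed formula converge to $f_D(0)$. I would handle this through characteristic functions. Indeed, $|\phi_{i,j}(t)|=\prod_{s=1}^{i-1}s^\alpha/\sqrt{s^{2\alpha}+t^2}\cdot\prod_{u=1}^{j-1}u^\alpha/\sqrt{u^{2\alpha}+t^2}$ is monotonically decreasing in both $i$ and $j$, for $i,j\ge 2$ bounded above by $|\phi_{2,2}(t)|=1/(1+t^2)\in L^1$, and converges pointwise to $|\phi_D(t)|$. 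Fourier inversion combined with dominated convergence then yields uniform convergence $f_{i,j}\to f_D$ on $\mathbb{R}$ and continuity of $f_D$. Positivity $f_D(0)>0$ is automatic, since $D$ is a symmetrized i.i.d.\ sum and $f_D(0)=\int f_{X_\infty}(x)^2\,dx>0$.

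Putting these pieces together, dominated convergence gives $\int_0^\infty e^{-u}f_{i,j}(u/j^\alpha)\,du\to f_D(0)\int_0^\infty e^{-u}\,du=f_D(0)$, and similarly for the second integral, so $\Pr[BINGO(i,j)]\sim\beta(i^{-\alpha}+j^{-\alpha})$; specializing $i=j=n$ yields \eqref{bingo}. The main obstacle is the uniform control of the densities $f_{i,j}$ in a neighborhood of $0$, which the monotonicity of $|\phi_{i,j}|$ resolves cleanly; the rest is routine manipulation of exponential tails.
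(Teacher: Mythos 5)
Your proposal is correct and follows essentially the same route as the paper: characteristic functions, Fourier inversion, and dominated convergence to show the density of $\Delta_{i,j}$ converges near $0$, then scaling by $j^{-\alpha}$ (resp.\ $i^{-\alpha}$). The only cosmetic differences are that you integrate out the fresh exponentials $V_i, W_j$ explicitly (the paper leaves $W_1$ inside an expectation, which is the same thing by Fubini), that you note uniform convergence of $f_{i,j}$ via the monotone domination $|\phi_{i,j}|\le|\phi_{2,2}|\in L^1$ rather than the paper's convergence along sequences $x_{ij}\to x$, and that you add the (useful, not in the paper) observation $f_D(0)=\int f_{X_\infty}^2>0$ ensuring $\beta>0$.
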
 

\begin{remark}\label{remj1}   
Set $\Del^{\dagger} := \sum_{i=1}^{\infty} (\XV_i-\YW_i)$.  Basically $\Del$ is estimated by $\Del^{\dagger}$ and $\beta$ is
the \pdf{} of $\Del^{\dagger}$ at $0$.  $\YW_j$ is almost always $o(1)$ (as
$j\ra\infty$) so that $0 \le \Del < \YW_j$ should occur with 
asymptotic probability $\beta \E[\YW_j]  = \beta j^{-\ah}$.
However  the validity of the approximation is 
nontrivial and has
forced our somewhat technical calculations. 
\end{remark}

\begin{proof}
We analyze $\P[0\le \Delta < \YW_{j}]$ as $ i,j \to \infty$. The analysis of the
other term is similar, and is thus omitted. 
 Note that $\Delta=\Delta\xij$ is the sum of independent random variables,
 each with a density with respect to Lebesgue measure. 
Thus $\Delta\xij$ has a \pdf, which we denote as $f\xij$. 

We will use characteristic functions to study the density $f\xij$. We set
$\phi\xij(t) = \E[ \exp(\ii t \Delta\xij)]$. Upon direct computation, we have, 
\begin{align}
\phi\xij(t) = \prod_{k=1}^{i-1} \frac{k^{\alpha}}{k^{\alpha} - \ii t}
\prod_{k=1}^{j-1} \frac{k^{\alpha}}{k^{\alpha} + \ii t }.  \nonumber
\end{align}

Using the Fourier inversion theorem, the density may be related to the
characteristic function. Thus,
provided $i+j\ge4$, $\phi\xij(t)$ is integrable
and then 
\begin{align}
f\xij(x) 
= \frac{1}{2 \pi} \int_{-\infty}^{\infty} e^{- \ii t x} \phi\xij(t) \dd t 
\label{eq:density}
= \frac{1}{\pi} \int_{0}^{\infty} \cos(tx) \phi\xij(t) \dd t. 
\end{align}

In particular, 
\begin{align}
f\xij(0) = \frac{1}{2 \pi} \int_{-\infty}^{\infty} \phi\xij(t) \dd t. \nonumber
\end{align}

Further, note that for each $t \in \reals$ as $i,j \to \infty$, $\phi\xij(t)  \to \prod_{k=1}^{\infty} \frac{k^{2\alpha}}{k^{2\alpha} + t^2}$ and thus by Dominated Convergence, 
\begin{align}
f\xij(x) \to \frac{1}{2\pi} \int_{-\infty}^{\infty}  \cos(tx) \prod_{k=1}^{\infty} \frac{k^{2\alpha}}{k^{2\alpha} + t^2}\dd t =: f_{\infty}(x). \label{eq:density_infty}
\end{align}

This establishes the pointwise convergence of the density. 
Moreover, the same argument shows that for any convergent sequence
$x_{ij}\to x$, $f\xij(x_{ij})\to f_\infty(x)$. 
In particular, we define $\beta := f_{\infty}(0)$. 
 We have, since $\YW_j$ is independent of $\Del\xij$, 
and has the same distribution as $j^{-\alpha}\YW_1$,
\begin{align}
\P[ 0 \leq \Delta\xij < \YW_{j} ] &= \E\Bigl[ \int_{0}^{\YW_{j}} f\xij(z) \dd z \Bigr] 
= \E\Bigl[j^{-\alpha} \int_{0}^{\YW_{1}} f\xij(j^{-\alpha}z)\, \dd z \Bigr] 
\end{align}
and hence, by dominated convergence,
\begin{align}
j^{\alpha} \P[ 0 \leq \Delta\xij < \YW_{j} ]   
\to \E\Bigl[ \int_{0}^{\YW_{1}} f_{\infty}(0) \,\dd z \Bigr] 
= f_{\infty}(0) 
\end{align}
as $i,j \to \infty$. 
This establishes the required asymptotics of $\P[ 0 \leq \Delta\xij < \YW_{j} ]   $.  
\end{proof}

\section{More Continuous Time}
\label{sec:cont_time2}
We generalize $BINGO$ to allow for arbitrary initial states.

\begin{define}\label{defbingoext}
$BINGO(a,b;c,d)$ denotes the event
that the Markov Chain $(X_k,Y_k)$ given by \eqref{mc1}--\eqref{mc2}
with initial state $(a,b)$ reaches state
$(c,d)$. 
\end{define}

As before, let $\XV_i,\YW_j$ be exponentials at rate $i^{\ah},j^{\ah}$ but
now restrict to $i\geq a, j\geq b$.  Again we have a bridge.

\begin{theorem}\label{newj2} 
Let $a\le c$ and $b\le d$, and
set
\beq\label{defdelta} 
\Del := \sum_{i=a}^{c-1}\XV_i - \sum_{j=b}^{d-1} \YW_j . \eeq
$BINGO(a,b;c,d)$
occurs if and only if either $0 \le -\Del < \XV_c$ or 
$0 \le \Del < \YW_d$.
\end{theorem}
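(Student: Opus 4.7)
The plan is to replay, essentially verbatim, the proof of \refT{newj1}, the only change being that the process starts from $(a,b)$ instead of $(1,1)$ and aims for $(c,d)$ instead of $(i,j)$. In the continuous-time embedding (with exponentials $\XV_i,\YW_j$ defined only for $i\ge a$, $j\ge b$), the time at which urn one first attains $c$ balls is $T_1:=\sum_{i=a}^{c-1}\XV_i$, and the time at which urn two first attains $d$ balls is $T_2:=\sum_{j=b}^{d-1}\YW_j$. Thus $\Delta=T_1-T_2$ is precisely the signed delay between these two ``landing times''.

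Next I would split according to the sign of $\Delta$; the event $\Delta=0$ is null because the $\XV_i,\YW_j$ have densities, so its treatment is immaterial. If $\Delta\le 0$, urn one arrives at $c$ first and then sits at $c$ for exactly $\XV_c$ further units of time before receiving its $(c+1)$-st ball, while urn two lands at $d$ an additional $-\Delta$ units later. The state $(c,d)$ is visited iff urn two arrives before urn one leaves, i.e., iff $0\le -\Delta<\XV_c$. The case $\Delta\ge 0$ is symmetric, exchanging the roles of the two urns and yielding $0\le \Delta<\YW_d$. Combining the two cases gives the claimed equivalence.

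There is essentially no obstacle: the argument is entirely tautological once the continuous-time picture is set up, just as in \refT{newj1}. The only mild care one must take is to (i) work with $\XV_i,\YW_j$ for $i\ge a,\,j\ge b$ only (since these are the waiting times actually encountered by the chain started at $(a,b)$), and (ii) check that the index ranges $a\le i\le c-1$ and $b\le j\le d-1$ in the definition \eqref{defdelta} of $\Delta$ match the landing-time interpretation above; both are immediate from the definitions and from the hypotheses $a\le c$, $b\le d$.
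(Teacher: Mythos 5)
Your proof is correct and follows exactly the approach the paper intends: the paper's own proof of this theorem simply reads ``The same as for Theorem \ref{newj1}'', and your write-up is a faithful adaptation of that argument to the shifted starting state $(a,b)$ and target $(c,d)$.
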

\begin{proof}
The same as  for Theorem \ref{newj1}.  
\end{proof}

Continuous time gives the asymptotics of $BINGO$ for a wide variety of
the parameters.  
We derive accurate estimates for various $BINGO$ events, which will be used
in our subsequent discussions.

We begin with the simplest case, starting and ending on the diagonal.
(Cf.\ \eqref{bingo}, when starting at $(1,1)$.)
\begin{theorem}\label{bingoaann} 
For any sequence $A = A(n) \to \infty$ with  $A(n) = o(n)$, 
as $n \to \infty$, 
\beq\label{7.6a}  
\Pr[BINGO(A,A;n,n)] \sim 
\Bigl(\frac{2\ah-1}{\pi}\Bigr)^{1/2} A^{(2\ah-1)/2}n^{-\ah}  .\eeq
\end{theorem}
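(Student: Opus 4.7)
The plan is to mimic the proof of \refT{t3}. \refT{newj2} with $(a,b,c,d)=(A,A,n,n)$ gives
\begin{equation*}
\Pr[BINGO(A,A;n,n)] = \Pr[0\le\Delta<Y_n]+\Pr[0\le-\Delta<X_n],
\end{equation*}
where $\Delta := \sum_{i=A}^{n-1}(X_i-Y_i)$ has characteristic function $\phi_{A,n}(t)=\prod_{k=A}^{n-1}k^{2\alpha}/(k^{2\alpha}+t^2)$ and density $f_{A,n}$ obtained by Fourier inversion. Symmetry of $\Delta$ makes the two terms equal; writing $Y_n\eqd n^{-\alpha}E$ with $E$ a unit exponential independent of $\Delta$, the manipulation from \refT{t3} yields
\begin{equation*}
n^\alpha\Pr[0\le\Delta<Y_n] = \E\bigsqpar{\int_0^E f_{A,n}(n^{-\alpha}w)\,\dd w}\to f_{A,n}(0),
\end{equation*}
once one verifies that $f_{A,n}$ is essentially constant on an interval of length $n^{-\alpha}$ around the origin. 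The theorem thus reduces to showing
\begin{equation*}
f_{A,n}(0)\sim\tfrac12\bigpar{(2\alpha-1)/\pi}^{1/2}A^{(2\alpha-1)/2}.
\end{equation*}

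This is a local central limit theorem for $\Delta$ at its symmetry point. The variance is
\begin{equation*}
\sigma^2 := \Var(\Delta) = 2\sum_{k=A}^{n-1}k^{-2\alpha}\sim\tfrac{2}{2\alpha-1}A^{-(2\alpha-1)},
\end{equation*}
and the Gaussian density at zero with this variance, $(2\pi\sigma^2)^{-1/2}$, matches the target constant exactly. To prove $f_{A,n}(0)\sim(2\pi\sigma^2)^{-1/2}$, I would substitute $t=u/\sigma$ in the Fourier inversion and show $\int\phi_{A,n}(u/\sigma)\,\dd u\to\int e^{-u^2/2}\,\dd u$. Pointwise convergence follows from the Taylor expansion $\log(k^{2\alpha}/(k^{2\alpha}+u^2/\sigma^2)) = -u^2/(\sigma^2 k^{2\alpha})+O(u^4/(\sigma^4 k^{4\alpha}))$: summed over $k\in[A,n-1]$, the leading term is $-u^2/2$ and the fourth-order error is $O(u^4 A^{-1})\to 0$. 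Uniform domination at large $|u|$ is obtained by retaining only the first $\Theta(A)$ factors of the product, giving a bound of the form $|\phi_{A,n}(u/\sigma)|\le(1+cu^2/A)^{-A}$ for some constant $c>0$, which serves as an integrable majorant uniformly in $A,n$.

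The regularity check on $f_{A,n}$ near $0$ is routine: since $f_{A,n}$ is even, $f_{A,n}''(0)=O(\sigma^{-3})$ gives $|f_{A,n}(x)-f_{A,n}(0)|=O(x^2\sigma^{-3})$, which is $o(\sigma^{-1})=o(f_{A,n}(0))$ on the scale $|x|=O(n^{-\alpha})$ precisely when $n^{-2\alpha}=o(A^{-(2\alpha-1)})$, and this follows from $A=o(n)$. The main obstacle is the quantitative local CLT above: in contrast to \refT{t3}, where the relevant characteristic function converges to a fixed integrable limit, here the law of $\Delta$ concentrates on a vanishing scale $\sigma$, so one works with a rescaled family of characteristic functions and must control their tails uniformly in both $A$ and $n$.
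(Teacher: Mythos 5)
Your proposal is correct and follows essentially the same route as the paper: Fourier inversion for the density $f_{A,n}$, a local CLT obtained by rescaling the characteristic function by $\sigma$ and verifying pointwise convergence plus an integrable majorant (the paper uses a split at $|v|=\sqrt{A}$ to get an $O(1/(1+v^2))$ bound where you use $(1+cu^2/A)^{-A}$, and both work), and then the substitution $Y_n\eqd n^{-\alpha}\YW_1$ together with near-constancy of $f_{A,n}$ on scale $n^{-\alpha}$. The one implementation difference is your handling of the near-constancy step — you invoke evenness of $f_{A,n}$ plus a uniform $O(\sigma^{-3})$ bound on $f_{A,n}''$ to get $|f_{A,n}(x)-f_{A,n}(0)|=O(x^2\sigma^{-3})$, whereas the paper shows directly that $\sigma_n f_n(x_n)\to(2\pi)^{-1/2}$ for any $x_n=o(\sigma_n)$ and combines this with $f_n(x)\le f_n(0)$ and dominated convergence; both are valid and equivalent in effect.
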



In the proof of \refT{t1}, we use only the weaker 
\beq\label{7.2a} \Pr[BINGO(A,A;n,n)] = n^{-\ah+o(1)},
\qquad A=n^{o(1)} . \eeq

Before proving \refT{bingoaann}, we state a generalization, where we allow
initial and final points that are off the diagonal (but not too far away; we
consider only what will turn out to be the typical cases, see Theorems
\ref{Toi} and \ref{Too}). We also, for later use in
\refS{section:conditionallaw}, allow $A=\Theta(n)$ as long as
$n-A=\Theta(n)$
(and in this connection
we change the notation from $n$ to $B$).

\begin{theorem}\label{Tbingo}
 Fix $M >0$, and $\cg>1$. 
Then,
uniformly for all  $A,B,\gL,\gG\in\frac12\bbZ$ 
with $A\pm\gL\in\bbZ$, $B\pm\gG\in\bbZ$ 
such that $A>0$, $B\ge \cg A$, 
$|\gL| \leq M \sqrt{A}$, 
and $|\gG| \leq M \sqrt{B}$, 
\begin{align}\label{near_multiple}
& \Pr[ BINGO(A+ \gL,A - \gL; B + \gG, B - \gG)] 
\nonumber\\&
\quad=
\bigpar{1+o_{A}(1)}\sqrt{\frac{2\ga-1}{\pi}}
\frac{A^{\ga-1/2}}{B^\ga\sqrt{1-(A/B)^{2\ga-1}}}
\nonumber\\&
\qquad
\qquad
\times
\exp\biggpar{-\frac{2\ga-1}{1-(A/B)^{2\ga-1}}
  \Bigpar{\frac{\gL}{A^{1/2}}-\frac{\gG}{B^{\ga\vphantom{/2}} A^{1/2-\ga}}}^2}
,\end{align}
where $o_A(1)$ is a quantity that tends to $0$ as $A\to\infty$, uniformly in
the other variables; i.e., $|o_A(1)|\le \eps(A)$ for some function
$\eps(A)\to0$ as $A\to\infty$.
\end{theorem}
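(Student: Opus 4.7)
The plan is to combine the continuous-time bridge of \refT{newj2} with a uniform local limit theorem for $\gD$ of \eqref{defdelta}. With $a=A+\gL$, $b=A-\gL$, $c=B+\gG$, $d=B-\gG$, \refT{newj2} gives
\[
\Pr[BINGO(a,b;c,d)] = \Pr[0\le -\gD<\XV_c]+\Pr[0\le\gD<\YW_d].
\]
As in the proof of \refT{t3}, since $\XV_c,\YW_d$ are independent of $\gD$ with means $c^{-\ga},d^{-\ga}=(1+o(1))B^{-\ga}$, each term equals $(1+o(1))\,f_\gD(0)\,B^{-\ga}$ provided $\gD$ has a continuous \pdf{} $f_\gD$ near $0$ with a quantitative handle on $f_\gD(0)$; the two halves contribute the factor $2$.

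Next, by Euler--Maclaurin comparison with $\int x^{-\ga}\,\dd x$ and $\int x^{-2\ga}\,\dd x$, I would compute
\[
\mu:=\E[\gD]=-2\gL A^{-\ga}+2\gG B^{-\ga}+O\bigpar{A^{-\ga-1/2}}
\]
and
\[
\gss:=\Var(\gD)=\frac{2A^{1-2\ga}}{2\ga-1}\bigpar{1-(A/B)^{2\ga-1}}\bigpar{1+o(1)},
\]
uniformly in the stated regime; in particular $\mu=O(A^{1/2-\ga})$ and $\gs=\Theta(A^{1/2-\ga})$, so $\mu/\gs$ stays bounded and a nontrivial Gaussian factor appears. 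The heart of the argument is then the \emph{uniform local central limit theorem}
\[
f_\gD(0)=\bigpar{1+o_A(1)}\,\frac{1}{\sqrt{2\pi\gss}}\exp\Bigpar{-\frac{\mu^2}{2\gss}},
\]
which I would establish by the Fourier-inversion method used in \refT{t3}: the characteristic function $\phi_\gD(t)=\E e^{\ii t\gD}$ has an explicit product form, and it suffices to show that $e^{-\ii t\mu+t^2\gss/2}\phi_\gD(t)\to 1$ pointwise while $|\phi_\gD(t)|$ is dominated by an integrable envelope uniformly in the parameters (with the tail controlled by the factors from small indices $i\approx A$, where $|\phi_\gD(t)|\lesssim\prod_{k\ge A}k^{2\ga}/(k^{2\ga}+t^2)$).

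Substituting this Gaussian approximation into $2f_\gD(0)B^{-\ga}$, the prefactor becomes $\sqrt{(2\ga-1)/\pi}\,A^{\ga-1/2}B^{-\ga}\bigpar{1-(A/B)^{2\ga-1}}^{-1/2}$, and the algebraic identity
\[
\frac{\mu^2}{2\gss}=\frac{2\ga-1}{1-(A/B)^{2\ga-1}}\Bigpar{\frac{\gL}{A^{1/2}}-\frac{\gG\,A^{\ga-1/2}}{B^\ga}}^2,
\]
which follows immediately from the expressions for $\mu$ and $\gss$, recovers exactly the exponent in \eqref{near_multiple}. The chief obstacle is the uniformity in the local CLT: the summands have widely differing variances $i^{-2\ga},j^{-2\ga}$, the ratio $B/A$ ranges from bounded to unbounded, and the drift $\mu$ can vary over a bounded number of standard deviations, so neither a classical i.i.d.\ LCLT nor the pointwise argument of \refT{t3} suffices. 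Both the pointwise convergence of the shifted characteristic function and the integrable tail bound must be made uniform over the whole parameter range.
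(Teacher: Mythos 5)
Your outline follows the same route as the paper: reduce via the continuous-time bridge of \refT{newj2} to a local limit theorem for $\Delta$, compute mean and variance, obtain $f_\Delta(0)$ by Fourier inversion against a Gaussian with drift, and check that $\mu^2/(2\sigma^2)$ reproduces the exponent in \eqref{near_multiple}. The algebra and the envelope bound you sketch match the paper's.

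The one real gap is exactly the point you flag at the end: you say the uniformity of the LCLT ``must be made uniform over the whole parameter range'' but do not say how. The paper sidesteps the need for a genuinely uniform LCLT with a compactness argument. If \eqref{near_multiple} failed uniformly, there would exist $\eps>0$ and a sequence $A(n)\to\infty$, $B(n)$, $\gL(n)$, $\gG(n)$ satisfying the hypotheses along which the ratio of the two sides stays at distance $\geq\eps$ from $1$. Since $A/B\in(0,1/\cg]$, $|\gL|/\sqrt A\le M$ and $|\gG|/\sqrt B\le M$ all live in compact sets, one may pass to a subsequence on which $A/B\to\zeta\in[0,1/\cg]$, $\gL/\sqrt A\to\gl$, $\gG/\sqrt B\to\gam$. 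Along that subsequence the characteristic-function calculation becomes a \emph{pointwise} limit — one fixed $v$, one fixed triple $(\zeta,\gl,\gam)$ — exactly as in \refT{t3}, and one shows $\sigma_n f_n(x_n)\to(2\pi)^{-1/2}\exp\bigl(-c_1^2(\gl-\zeta^{\ga-1/2}\gam)^2/2\bigr)$ by dominated convergence with your envelope. This contradicts the putative failure, so uniformity holds. With this reduction your proof closes; without it, the hardest step remains open.
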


\begin{remark}\label{Rgauss}
 The right-hand side of \eqref{near_multiple}, omitting the
$o_{A}$ term, is the density function at $\gG$ for a normal distribution
$N(\mu,\gss)$ with parameters
\begin{align}
\mu&=(B/A)^\ga \gL,
\\
  \gss&=
\frac{ A^{1-2\ga}B^{2\ga}\bigpar{1-(A/B)^{2\ga-1}}}{2(2\ga-1)}.
\end{align}
\end{remark}

The two main cases of interest to us are $A\ll B=n$, as in \refT{bingoaann},
and $B/A$ constant (at least up to rounding errors). For convenience, we
state immediate corollaries covering these cases.

\begin{corollary}\label{Cbingo0}
Suppose $A=A(n)\to\infty$ with $A=o(n)$.
Then,
for all  $\gL=\gL(n)$ and $\gG=\gG(n)$
with $\gL=O\bigpar{\sqrt A}$ 
and $\gG=O\bigpar{\sqrt n}$, 
\begin{multline}\label{near_multiple0}
 \Pr\bigsqpar{ BINGO(A+ \gL,A - \gL; n + \gG, n - \gG)}
\\
\sim
\sqrt{\frac{2\ga-1}{\pi}}
\frac{A^{\ga-1/2}}{n^\ga}
\exp\biggpar{-\bigpar{2\ga-1}
  \frac{\gL^2}{A}}
.\end{multline}
\end{corollary}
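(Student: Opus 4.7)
The corollary is the specialization of \refT{Tbingo} to the regime $B=n$ with $A=o(n)$, so the plan is simply to substitute $B=n$ in \eqref{near_multiple} and analyze how the three ingredients (the prefactor, the factor $1-(A/B)^{2\ga-1}$, and the shifted Gaussian exponent) behave as $A/n\to 0$, uniformly for $\gL=O(\sqrt A)$ and $\gG=O(\sqrt n)$.

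First I would check that the hypotheses of \refT{Tbingo} are satisfied: since $A=o(n)$, any fixed $\cg>1$ (say $\cg=2$) gives $n\ge \cg A$ for all sufficiently large $n$, and the bounds $|\gL|\le M\sqrt A$, $|\gG|\le M\sqrt n$ hold for a suitable constant $M$ since $\gL=O(\sqrt A)$ and $\gG=O(\sqrt n)$. Next, since $2\ga-1>0$ and $A/n\to 0$, we have $(A/n)^{2\ga-1}\to 0$, so
\begin{equation*}
\sqrt{1-(A/n)^{2\ga-1}}=1+o(1),\qquad \frac{2\ga-1}{1-(A/n)^{2\ga-1}}=(2\ga-1)(1+o(1)),
\end{equation*}
and the prefactor in \eqref{near_multiple} simplifies immediately to $(1+o(1))\sqrt{(2\ga-1)/\pi}\,A^{\ga-1/2}/n^\ga$.

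The key step is to control the exponent uniformly. Rewrite the second term inside the square as
\begin{equation*}
\frac{\gG}{n^\ga A^{1/2-\ga}}=\frac{\gG}{\sqrt n}\Bigpar{\frac{A}{n}}^{\ga-1/2}.
\end{equation*}
Since $\ga>1/2$ and $\gG=O(\sqrt n)$, this quantity is $O\bigpar{(A/n)^{\ga-1/2}}=o(1)$, uniformly in $\gG$. Combined with the bound $|\gL|/\sqrt A=O(1)$, expanding the square yields
\begin{equation*}
\Bigpar{\frac{\gL}{\sqrt A}-\frac{\gG}{n^\ga A^{1/2-\ga}}}^2=\frac{\gL^2}{A}+O\bigpar{(A/n)^{\ga-1/2}}+O\bigpar{(A/n)^{2\ga-1}}=\frac{\gL^2}{A}+o(1),
\end{equation*}
uniformly. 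Multiplying by $(2\ga-1)(1+o(1))$ gives an exponent of $-(2\ga-1)\gL^2/A+o(1)$, so the exponential factor in \eqref{near_multiple} equals $(1+o(1))\exp\bigpar{-(2\ga-1)\gL^2/A}$. Multiplying this by the simplified prefactor yields \eqref{near_multiple0}.

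There is no serious obstacle here; the only thing to be careful about is uniformity of the various $o(1)$ estimates in $\gL$ and $\gG$, but this is automatic because each error bound is either a function of $A/n$ alone or the product of such a function with a quantity that is bounded by the hypotheses $|\gL|\le M\sqrt A$, $|\gG|\le M\sqrt n$.
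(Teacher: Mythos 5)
Your proof is correct and is precisely the substitution-and-simplification argument the paper intends; the paper states \refC{Cbingo0} as an ``immediate'' consequence of \refT{Tbingo} without writing it out, and you have filled in exactly the missing calculation. The rewriting of the cross term as $\frac{\gG}{\sqrt n}(A/n)^{\ga-1/2}$, the use of $\ga>1/2$ to conclude it is $o(1)$ uniformly, and the expansion of the square with uniformity tracked through $|\gL|/\sqrt A=O(1)$ are all the right steps, and the uniformity claim in \refT{Tbingo} (the $o_A(1)$) is correctly invoked. One small thing worth making explicit: the quantities $A\pm\gL$ and $n\pm\gG$ must be integers (and hence $\gL,\gG\in\frac12\bbZ$) for the $BINGO$ events to be defined, a hypothesis that \refT{Tbingo} states but \refC{Cbingo0} leaves implicit; your argument tacitly assumes this, which is fine, but a reader checking carefully would want that noted.
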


In \refS{sec:lower}, we use  only the rougher asymptotics, 
extending \eqref{7.2a}: 
\beq\label{nearrough}  
\Pr[BINGO(A+\Lam,A-\Lam;n+\Gam,n-\Gam)] = n^{-\ah+o(1)},
\qquad A=n^{o(1)} . 
\eeq

\begin{corollary}\label{Cbingo2}
Suppose $A=A(n)$ and $B=B(n)$ with $A\to\infty$ and $B/A\to\cg>1$ as \ntoo. 
Then,
for all  $\gL=\gL(n)$ and $\gG=\gG(n)$
with $\gL=O\bigpar{\sqrt A}$ 
and $\gG=O\bigpar{\sqrt B}$, 
\begin{align}\label{near_multiple2}
& \Pr[ BINGO(A+ \gL,A - \gL; B + \gG, B - \gG)] 
\nonumber\\&
\quad
\sim
\sqrt{\frac{2\ga-1}{\pi}}
\frac{A^{\ga-1/2}}{B^\ga\sqrt{1-\cg ^{1-2\ga}}}
\exp\biggpar{-\frac{(2\ga-1)}
{\bigpar{1-\cg^{1-2\ga}}A}  \bigpar{\gL-\cg^{-\ga}\gG}^2}
.\end{align}
\end{corollary}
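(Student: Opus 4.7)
The plan is to deduce this directly from \refT{Tbingo}. There is a minor notation clash to dispose of first: in the corollary $\cg$ denotes the limit of $B/A$, while in \refT{Tbingo} it is a fixed lower-bound constant. To reconcile, I would fix any $\cg_0$ with $1 < \cg_0 < \cg$; since $B/A \to \cg$, we have $B \ge \cg_0 A$ for all sufficiently large $n$. Since $\gL = O(\sqrt A)$ and $\gG = O(\sqrt B)$, one can choose a single constant $M > 0$ with $|\gL| \le M\sqrt A$ and $|\gG| \le M\sqrt B$ throughout. The hypotheses of \refT{Tbingo} with constants $\cg_0, M$ are then met for all large $n$, so \eqref{near_multiple} applies.

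It then remains to simplify the right-hand side of \eqref{near_multiple} under the extra assumption $A/B \to 1/\cg$. The ratio of the prefactors in \eqref{near_multiple} and \eqref{near_multiple2} equals $\sqrt{(1-\cg^{1-2\ga})/(1-(A/B)^{2\ga-1})}$, which tends to $1$ because $(A/B)^{2\ga-1} \to \cg^{1-2\ga}$. For the exponent I would first rewrite
\begin{equation*}
\frac{\gL}{A^{1/2}} - \frac{\gG}{B^\ga A^{1/2-\ga}} = \frac{1}{\sqrt A}\bigpar{\gL - (A/B)^\ga \gG},
\end{equation*}
so the exponent in \eqref{near_multiple} becomes $-\frac{2\ga-1}{1-(A/B)^{2\ga-1}} \cdot (\gL - (A/B)^\ga\gG)^2/A$.

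The one genuinely substantive point, though it is easy, is to check that this exponent differs from the target $-\frac{2\ga-1}{1-\cg^{1-2\ga}}(\gL - \cg^{-\ga}\gG)^2/A$ by $o(1)$, so that the ratio of exponentials is $1+o(1)$. Setting $a := \gL - (A/B)^\ga\gG$ and $b := \gL - \cg^{-\ga}\gG$, I would note that $a - b = \gG\bigpar{\cg^{-\ga} - (A/B)^\ga} = o(\sqrt A)$ (using $\gG = O(\sqrt B) = O(\sqrt A)$ since $B = \Theta(A)$), while $a + b = O(\sqrt A)$, so $(a^2 - b^2)/A = o(1)$. Combined with the convergence of the outer coefficient $(2\ga-1)/(1-(A/B)^{2\ga-1}) \to (2\ga-1)/(1-\cg^{1-2\ga})$, this yields an $o(1)$ discrepancy in the exponents. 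Multiplying the resulting $1+o(1)$ ratio of exponentials by the $1+o(1)$ ratio of prefactors produces \eqref{near_multiple2}. This cancellation-in-the-exponent is the only place where the uniformity in \refT{Tbingo} is used nontrivially; the rest is notational.
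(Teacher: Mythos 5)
Your proof is correct and is exactly the intended argument: the paper states \refC{Cbingo2} as an ``immediate corollary'' of \refT{Tbingo} without writing out the details, and your proposal supplies precisely those details (fixing $\cg_0\in(1,\cg)$ to satisfy the hypothesis $B\ge\cg_0 A$, choosing $M$, applying \eqref{near_multiple} with its uniform error term, and then replacing $A/B$ by $1/\cg$ in both the prefactor and the exponent, where the factorization $(a^2-b^2)/A=(a+b)(a-b)/A=O(\sqrt A)\cdot o(\sqrt A)/A=o(1)$ controls the exponent). One minor quibble: the uniformity of $o_A(1)$ in \refT{Tbingo} is what lets you apply the theorem along a sequence of varying parameters at all (prefactor and exponent alike), so attributing it specifically to the exponent cancellation understates its role slightly, but this does not affect correctness.
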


The proofs of Theorems \ref{bingoaann} and \ref{Tbingo}
are similar to the  proof of \refT{t3}.
We begin with a proof of the simpler \refT{bingoaann}, to show the main
features of the proof. We then show the modifications needed for the more
general \refT{Tbingo}.

\begin{remark}\label{remj2}  
Theorem \ref{bingoaann} is
basically a  local CLT for $\Del$, cf.\ Remark \ref{remj1}.
Set 
$\Del_n:= \sum_{k=A}^{n-1}(\XV_k-\YW_k)$.  $\Del_n$
is asymptotically Gaussian with mean $\mu=0$ and variance
\beq\label{eq:var} 
\sig_n^2 := \Var[\gD_n]
=2\sum_{k=A}^{n-1}k^{-2\ah} \sim \frac{2}{2\ah-1}A^{1-2\ah} . \eeq
Approximating $\Del_n$ by this Gaussian, it has \pdf{} at zero asymptotically
$(2\pi)^{-1/2}\sig^{-1}=((2\ah-1)/4\pi)^{1/2}A^{(2\ah-1)/2}$.
The probability that $0 \le -\Del_n < \XV_n$
is then $\sim \E[\XV_{n}]= n^{-\ah}$ times this, and $\Pr[BINGO]$ is twice
that.

Note also that in \refC{Cbingo0}, the probability has an extra factor of 
$\exp[-(2\ah-1)(\gL/\sqrt A)^2]$ over the basic $\lam=0$ case of Theorem
\ref{bingoaann}. 
Rougly, while $\Del$ is still asymptotically Gaussian, the mean has moved
$\sim 2\Lam A^{-\ah}$ from zero.  

As before,  the validity of the approximations are 
 nontrivial and has
forced our somewhat technical calculations. 
\end{remark}

\begin{proof}[Proof of \refT{bingoaann}]
 We assume $A\le n-1$ and define, as in \refR{remj2}, 
 $\Delta_n := \sum_{k=A}^{n-1} (\XV_k - \YW_k)$. 
We note that $\gD_n$ has is centered with
variance $\gss_n$ given by \eqref{eq:var}, and, by the same argument as in the
proof of \refT{t3}, it has a \pdf, which we denote as $f_n$. 

We set the characteristic function 
$\phi_n(t) := \E[\exp(\ii t \Delta_n) ] $ and note that, by direct computation, 
\begin{align}
\phi_n(t) = \prod_{k=A}^{n-1} \frac{k^{2\alpha}}{k^{2 \alpha} + t^2}. 
\label{eq:cf_exp}
\end{align}
As in our earlier analysis, we 
use the Fourier inversion formula to conclude
that 
\begin{align}
f_n(x) = \frac{1}{2\pi} \int_{-\infty}^{\infty} e^{-\ii tx} \phi_n(t) \dd t. 
\label{eq:cf}
\end{align}
This again implies that $f_n(x) \leq f_n(0)$.
Note that \eqref{eq:cf} implies 
\begin{align}
\sigma_n f_n(0) = \frac{\sigma_n}{\pi} \int_0^{\infty} \phi_n(t) \dd t. \nonumber
\end{align}
Using the change of variables $v = \sigma_n t$, we have, 
by \eqref{eq:cf_exp},
\begin{align}\label{byx}
\sigma_n f_n(0) 
= \frac{1}{\pi} \int_0^{\infty} \phi_n\Big(\frac{v}{\sigma_n}  \Big) \dd v 
= \frac{1}{\pi} \int_0^{\infty} \prod_{k=A}^{n-1} \frac{k^{2 \alpha}}{k^{2 \alpha} + \frac{v^2}{\sigma_n^2}} \dd v. 
\end{align}

We use the dominated convergence theorem, and begin by noting that for $k\ge
A$, we have, using \eqref{eq:var}
and letting $C$ and $c$ denote unspecified positive constants,
\begin{align}\label{bya}
k^{-2\ah}  \frac{v^2}{\sigma_n^2} 
\le  A^{-2\ah}  \frac{v^2}{\sigma_n^2} 
=
O(v^2/A).
\end{align}
In particular, for any fixed real $v$, recalling again \eqref{eq:cf_exp}
and \eqref{eq:var},
\begin{align}\label{byb}
\ln  \phi_n \Big(\frac{v}{\sigma_n} \Bigr) 
= 
-\sum_{k=A}^{n-1}\ln\Bigl(1+  \frac{v^2/\sigma_n^2}{k^{2 \alpha}}\Bigr)
\sim
-\sum_{k=A}^{n-1}  \frac{v^2/\sigma_n^2}{k^{2 \alpha}}
=-\frac{v^2}{2}
\end{align}
and thus
\begin{align}\label{byc}
  \phi_n \Big(\frac{v}{\sigma_n} \Bigr) 
\to
e^{-{v^2}/2}.
\end{align}
Furthermore, if $|v|\le\sqrt A$, then \eqref{bya} shows
$k^{-2\ah}v^2/\sigma_n^2=O(1)$ and thus, for some $c>0$,
$\ln\bigl(1+k^{-2\ah}v^2/\sigma_n^2\bigr)\ge ck^{-2\ah}v^2/\sigma_n^2$
and, similarly to \eqref{byb},
$\ln  \phi_n ({v}/{\sigma_n}) 
\le -c\frac{v^2}{2}
$ 
and thus
\begin{align}\label{byd}
  \phi_n \Big(\frac{v}{\sigma_n} \Bigr) 
\le
e^{-c{v^2}/2},
\qquad |v|\le\sqrt A.
\end{align}
If $|v|>\sqrt A$, we instead have, when $k\le 2A$,
using again \eqref{eq:var},
\begin{align}\label{bye}
  \frac{k^{2\ah}}{k^{2\ah}+v^2/\sigma_n^2}
\le   \frac{(2A)^{2\ah}}{(2A)^{2\ah}+A/\sigma_n^2} 
\le   \frac{2^{2\ah}}{2^{2\ah}+c_1} 
=c_2<1.
\end{align}
 Thus, crudely, by \eqref{eq:cf_exp} and \eqref{eq:var},
for large enough $n$ and $|v|>\sqrt A$,
\begin{align}
  \phi_n \Big(\frac{v}{\sigma_n} \Bigr) 
&
\le\prod_{A}^{2A}  \frac{k^{2\ah}}{k^{2\ah}+v^2/\sigma_n^2}
\le   \frac{A^{2\ah}}{A^{2\ah}+v^2/\sigma_n^2}
\prod_{A+1}^{2A} c_2
\le  \frac{A^{2\ah}\sigma_n^2}{v^2}c_2^A
.
\label{byf}
\end{align}
For convenience, we combine \eqref{byd} and \eqref{byf} into the (far from
sharp) estimate, valid for large $n$ and all $v$,
\begin{align}
  \phi_n \Big(\frac{v}{\sigma_n} \Bigr) 
=O\Bigpar{\frac{1}{1+v^2}}
.
\label{byff}
\end{align}

Consequently, dominated convergence yields, using 
\eqref{byx}, \eqref{byc} and \eqref{byff},
\begin{align}\label{byg}
\sigma_n f_n(0) 
=
\frac{1}{\pi} \int_{0}^\infty\phi_n\Bigpar{\frac{v}{\sigma_n} } \dd v 
\to \frac{1}{\pi} \int_0^{\infty} e^{-v^2/2} \dd v 
=\frac{1}{\sqrt{2\pi}}.
\end{align}
Moreover, for any sequence $x_n=o(\sigma_n)$, we obtain in the same way from
\eqref{eq:cf}
\begin{align}\label{byh}
\sigma_n f_n(x_n) 
= \frac{1}{2\pi} \int_{-\infty}^{\infty} 
e^{{-\ii vx_n}/{\sigma_n}}
\phi_n \Big(\frac{v}{\sigma_n}  \Big) \dd v 
\to \frac{1}{2\pi} \int_{-\infty}^{\infty} e^{-v^2/2} \dd v 
=\frac{1}{\sqrt{2\pi}}.
\end{align}

We use again that $\YW_{j}$ has the same distribution as $j^{-\ah}\YW_1$ and
obtain
\begin{align}\label{byi}
\P[ 0\le  \Delta_n < \YW_{n}]  
= \E \int_0^{\YW_{n}} f_n(x) \dd x
=n^{-\ah} \E \int_0^{\YW_{1}} f_n\Bigl(\frac{y}{n^{\ah}}\Bigr) \dd y.
\end{align}
Recall also that $f_n(x)\le f_n(0)$ for every $x$, and thus \eqref{byg}
implies that 
$\sigma_n f_n(x)$ is uniformly bounded for all $n$ and $x$.

Since, for every fixed $y$, 
$y/n^{\ah}=o(\sigma_n)$ by \eqref{eq:var}, we can use \eqref{byh}
and dominated convergence (twice) in \eqref{byi} and obtain
\begin{align}
n^{\ah}\sigma_n\P[ 0\le  \Delta_n < \YW_{n}]  
&=\E \int_0^{\YW_{1}} \sigma_nf_n\Bigl(\frac{y}{n^{\ah}}\Bigr) \dd y
 \to \E\Bigsqpar{\frac{\YW_{1}}{\sqrt{2\pi}}}
= \frac{1}{\sqrt{2\pi}}.
\label{byj}
\end{align}
Thus,
\begin{align}
\P[ 0\le  \Delta_n < \YW_{n}]  
\sim
 \frac{1}{\sqrt{2\pi}}\sigma_n^{-1}n^{-\ah}
= \Bigl(\frac{2\ah-1}{4\pi}\Bigr)^{1/2} A^{(2\ah-1)/2}n^{-\ah}.
\label{byk}
\end{align}
The probability $\P[ 0\le  -\Delta_n < \XV_{n}]  $ is the same, and \eqref{7.6a}
follows. 
\end{proof}

\begin{proof}[Proof of \refT{Tbingo}]
The proof is similar to that of Theorem \ref{bingoaann} and thus we detail
only the novelties and omit some parts which are similar to the earlier
proof. 

Let $p(A,B,\gL,\gG)$
denote the left-hand side of \eqref{near_multiple},
and let $q(A,B,\gL,\gG)$ the right-hand side without the factor $1+o_A(1)$.
First, 
note that if the asserted uniform estimate does not hold, then there
exist $\eps>0$ and
 $A=A(n)\to\infty$, $B=B(n)$, $\gL=\gL(n)$ and $\gG=\gG(n)$ that
satisfy the conditions such that $|p(A,B,\gL,\gG)/q(A,B,\gL,\gG)-1|>\eps$
for every $n$. By selecting a subsequence, we may furthermore
assume that 
\begin{align}\label{extra}
A/B\to\zeta,\qquad 
\gL/\sqrt A\to \gl,\qquad 
\gG/\sqrt B\to \gam, 
\end{align}
for some $\zeta\in[0,1)$ and $\gl,\gam\in\bbR$.
Hence, to obtain the desired contradiction, it suffices to prove 
that $p(A,B,\gL,\gG)\sim q(A,B,\gL,\gG)$
under the extra assumption \eqref{extra}. 
(This assumption is convenient below, but not essential.) 

We assume \eqref{extra} and define 
\begin{align}
\Delta_n = \sum_{k= A + \Lambda }^{B + \Gamma-1 } \XV_k - \sum_{k=A- \Lambda  }^{B- \Gamma-1 } \YW_k.
\end{align}

From Theorem \ref{newj2}, 
$BINGO(A + \Lambda, A - \Lambda; B + \Gamma, B - \Gamma)$ occurs if either
$\{ 0\le - \Delta_n <\XV_{B+\Gamma  } \}$ or if 
$\{ 0\le \Delta_n < \YW_{B-\Gamma } \}$. 
  We analyze $\P[ 0 \le \Delta_n < \YW_{B- \Gamma }]$. 

We compute first the variance of $\Delta_n$ and observe that,
using \eqref{extra}, 
\begin{align}\label{bba}
\sigma_n^2 
&:= \mathrm{Var}(\Delta_n) 
= \sum_{k= A + \Lambda }^{B + \Gamma-1 } \Var \XV_k + \sum_{k=A- \Lambda}^{B- \Gamma-1 } 
\Var \YW_k
\nonumber\\&
=\sum_{k= A + \Lambda }^{B + \Gamma-1 } k^{-2\ah} + \sum_{k=A- \Lambda}^{B- \Gamma-1 } k^{-2\ah}
\sim \frac{2}{2 \alpha -1}  
\Bigpar{A^{-(2 \alpha -1)}-B^{-(2\ga-1)}}
\nonumber\\&
\sim \frac{2}{2 \alpha -1}  A^{-(2 \alpha -1)}
\bigpar{1-\zeta^{2\ga-1}}.
\end{align}

We continue with the characteristic function 
\begin{align}
\phi_n(t) := \E\bigl[ e^{\ii t \Delta_n}\bigr]
= \prod_{k= A + \Lambda }^{B + \Gamma-1 }  \frac{k^{\ah}}{k^{\ah}-\ii t}
 \prod_{k=A- \Lambda}^{B- \Gamma-1 } \frac{k^{\ah}}{k^{\ah}+\ii t}. 
\end{align}
This is no longer real, but we can still estimate its absolute value as in
\eqref{byd} and \eqref{byf}, with minor modifcations,
and obtain \eqref{byff}.
Furthermore,
\begin{align}
\ln\phi_n(t)
&
= -\sum_{k= A + \Lambda }^{B + \Gamma-1 }\ln \Bigl(1-\frac{\ii t}{k^{\ah}}\Bigr)
- \sum_{k=A- \Lambda}^{B- \Gamma-1 }\ln \Bigl(1+\frac{\ii t}{k^{\ah}}\Bigr).
\end{align}
We consider $t=v/\sigma_n$ for a fixed real $v$ and obtain by 
Taylor expansions, recalling \eqref{bya} (with a trivial modification),
\eqref{bba} and \eqref{extra},
\begin{align}
\ln\phi_n\Bigpar{\frac{v}{\sigma_n}}
&
=\Bigl( \sum_{k= A + \Lambda }^{B + \Gamma-1 } - \sum_{k=A- \Lambda}^{B- \Gamma-1 }\Bigr)
\frac{\ii v/\sigma_n}{k^{\ah}}
- 
\Bigl( \sum_{k= A + \Lambda }^{B + \Gamma-1 } + \sum_{k=A- \Lambda}^{B- \Gamma-1 }\Bigr)
\frac{ v^2/\sigma_n^2}{2k^{2\ah}}\bigpar{1+o(1)}
\nonumber\\&
=-2\Lam\frac{\ii v/\sigma_n}{A^{\ah}}+2\Gam\frac{\ii v/\sigma_n}{B^{\ah}}
-\frac{v^2}{2}+o(1)
\nonumber\\&
\to 
\sqrt{\frac{2(2\ah-1)}{1-\zeta^{2\ga-1}}}\bigpar{-\lambda+\zeta^{\ga-1/2}\gam} v\ii 
 -\frac{v^2}{2}.
\label{bbf}
\end{align}
It follows by Fourier inversion and dominated convergence,
using \eqref{bbf} and \eqref{byff}, that, for any sequence
$x_n=o(\sigma_n)$,
with $c_1:=\sqrt{2(2\ah-1)/(1-\zeta^{2\ga-1})}$,
\begin{align}
\sigma_n f_n(x_n) &
= \frac{\sigma_n}{2\pi} \int_{-\infty}^{\infty} e^{-\ii x_n t}\phi_n (t)\dd t
= \frac{1}{2\pi} \int_{-\infty}^{\infty} e^{-\ii v x_n/\sigma_n}\phi_n
\Big(\frac{v}{\sigma_n}  \Big) \dd v 
\nonumber\\&
\to
\frac{1}{2\pi} \int_{-\infty}^{\infty} e^{-c_1(\lambda-\zeta^{\ga-1/2}\gam) v\ii-v^2/2} \dd v 
\nonumber\\&
= \frac{1}{\sqrt{2\pi}}  e^{-c_1^2(\lambda-\zeta^{\ga-1/2}\gam)^2/2}
.
\label{bbx}
\end{align}
Furthermore, using \eqref{byff} again, 
we have the uniform bound, for all real $x$,
\begin{align}\label{bbxx}
\sigma_n f_n(x) &
\le 
\frac{1}{2\pi} \int_{-\infty}^\infty\Bigabs{\phi_n\Bigpar{\frac{v}{\sigma_n} }} 
 \dd v 
\le \frac{1}{2\pi} \int_{-\infty}^{\infty} \frac{C}{1+v^2} \dd v 
\le C
.
\end{align}
We complete the proof as in \eqref{byi}--\eqref{byk}, 
now obtaining
\begin{align}
\P[ 0\le  \Delta_n < \YW_{B-\gG}]  
\sim
 \frac{1}{\sqrt{2\pi}}\sigma_n^{-1}B^{-\ah}
  e^{-c_1^2(\lambda-\zeta^{\ga-1/2}\gam)^2/2}.
\end{align}
 $\P[ 0< - \Delta_n < \XV_{B+ \Gamma  }]$ is similar, and  thus
\begin{align}
p(A,B,\gL,\gG)
\sim
 \frac{2}{\sqrt{2\pi}}\sigma_n^{-1}B^{-\ah}
  e^{-c_1^2(\lambda-\zeta^{\ga-1/2}\gam)^2/2}.
\label{bykk}
\end{align}
A simple calculation, using \eqref{extra} in \eqref{near_multiple}, shows
that the same asymptotics holds for $q(A,B,\gL,\gG)$, and thus
$p(A,B,\gL,\gG)\sim q(A,B,\gL,\gG)$. As explained at the beginning of the
proof, this implies the theorem.
\end{proof}
We end this section with two less precise estimates that are useful because
they do not require the
condition $B\ge\cg A$ in \refT{Tbingo}.

\begin{lemma}\label{LQ}
  Suppose that $0<A<B$, and that $\gL,\gG\ge0$ with $\gL< A$,
$B-\gG\ge A$,
  and\/ $\gG/B\le \frac{1}{8}\gL/A$. Then, for some constant $c>0$
  depending on $\ga$ only,
  \begin{equation}\label{lq}
    \Pr\Bigsqpar{\bigcup_{\ell\le\gG}BINGO(A+\gL,A-\gL;B+\ell,B-\ell)}
\le e^{-c\gL^2/A}.
  \end{equation}
\end{lemma}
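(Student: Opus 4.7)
The plan is to reduce the union to a continuous-time event and then apply a Chernoff bound on a signed sum of independent exponentials.

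First I would observe that the union $\bigcup_{\ell\le\gG}BINGO(A+\gL,A-\gL;B+\ell,B-\ell)$ equals the event $\{Y_{2B}\ge B-\gG\}$: when the total reaches $2B$ the process is at some state $(B+\ell',B-\ell')$ with $\ell'=X_{2B}-B$, and the union holds iff $\ell'\le\gG$, iff $Y_{2B}\ge B-\gG$. Passing to the continuous-time embedding (as in the proof of \refT{t3}), up to a null event this coincides with $\{T_Y(B-\gG)<T_X(B+\gG+1)\}$, where $T_X(n):=\sum_{k=A+\gL}^{n-1}\XV_k$ and $T_Y(n):=\sum_{k=A-\gL}^{n-1}\YW_k$. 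Setting $S:=T_X(B+\gG+1)-T_Y(B-\gG)$, the task reduces to bounding $\Pr[S>0]$.

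I would then apply Chernoff: $\Pr[S>0]\le \E[e^{tS}]$ for $0<t<(A+\gL)^\ga$ (the convergence on the $\YW$-side is automatic for $t>0$). Since $S$ is a signed sum of independent exponentials, the MGF factorises and, after telescoping over the common range $[A+\gL,B-\gG-1]$, one obtains
\begin{equation*}
\log\E[e^{tS}]
= -\!\!\sum_{k=A+\gL}^{B-\gG-1}\!\!\log(1-t^2/k^{2\ga})
  -\!\!\sum_{k=B-\gG}^{B+\gG}\!\!\log(1-t/k^\ga)
  -\!\!\sum_{k=A-\gL}^{A+\gL-1}\!\!\log(1+t/k^\ga).
\end{equation*}
The first two terms are positive (harmful) while the third is negative and provides the Chernoff gain. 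I would choose $t$ to be a small constant multiple of $(A+\gL)^\ga$ (the constant depending only on $\ga$). Under the hypothesis $\gG/B\le\gL/(8A)$, and using $B\ge A$ and $\ga>1$ so that $B^{1-\ga}\le A^{1-\ga}$, together with $\gG\le B/8$ (so the $k$ in the middle sum lie in $[7B/8,9B/8]$), the middle sum is at most a small fraction of the third; the first sum is controlled similarly via the convergence of $\sum k^{-2\ga}$.

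Estimating the dominant third sum then gives $-\log\E[e^{tS}]\gtrsim c(\ga)\gL^2/A$ in the Gaussian regime $\gL=O(\sqrt A)$ (this matches the Gaussian tail arising in \refT{Tbingo}), while for $\gL$ comparable to $A$ the third sum has $\sim\gL$ terms of order one and gives the much stronger bound $-\log\E[e^{tS}]\gtrsim c(\ga)\gL$. Since $\gL^2/A\le\gL$ whenever $\gL<A$, both regimes combine to the uniform bound $\Pr[S>0]\le e^{-c(\ga)\gL^2/A}$.

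The main obstacle is balancing the choice of $t$ across the two regimes and controlling the two positive sums uniformly. When $\gL$ approaches $A$ one must take $t$ close to $(A+\gL)^\ga$, so the logarithms $\log(1-t/k^\ga)$ for the smallest $k$ in each sum become delicate; the classical Euler product $\prod_{k\ge1}(1+z/k^2)=\sinh(\pi\sqrt z)/(\pi\sqrt z)$ (and its analogue for general $\ga>1$) serves as a useful guide for the computation of the third sum in that regime.
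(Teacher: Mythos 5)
The reduction you make — pass to continuous time, express the union as $\{S>0\}$ with $S=\sum_{i=A+\gL}^{B+\gG}\XV_i-\sum_{j=A-\gL}^{B-\gG-1}\YW_j$, and apply Markov's inequality to $e^{tS}$ — is exactly the paper's argument, and your factorisation of $\log\E[e^{tS}]$ into three sums (a common range giving $-\log(1-t^2/k^{2\ga})$, an $i$-tail near $B$, and a $j$-tail near $A$) is a correct and slightly more structured rewriting of the product of exponential MGFs.

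However, your choice of the Chernoff parameter is wrong in the regime that actually matters. You propose $t=\eps(A+\gL)^\ga$ with $\eps$ a constant depending only on $\ga$. With that choice the harmful first sum is of order $\sum_{k\ge A}t^2k^{-2\ga}\asymp t^2A^{1-2\ga}=\eps^2 A$, while the helpful third sum is of order $-\sum_{k\approx A}tk^{-\ga}\asymp -t\gL A^{-\ga}=-\eps\gL$. For $\gL=O(\sqrt A)$ (the Gaussian regime, which is the content of the lemma) the positive $\eps^2A$ term dominates the negative $-\eps\gL$ term unless $\eps\lesssim\gL/A$, so your bound $\E[e^{tS}]$ is $\ge1$ and Markov gives nothing. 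You claim the third sum alone yields $\gtrsim\gL^2/A$, but with $t\asymp A^\ga$ it yields $\asymp\eps\gL$, not $\gL^2/A$; the $\gL^2/A$ rate only appears after one optimises $t$. The correct (and the paper's) choice is $t\asymp\gL A^{\ga-1}$, i.e.\ $t/A^\ga\asymp\gL/A$, which makes the negative linear term $-t\gL A^{-\ga}\asymp-\gL^2/A$ exactly balance twice the positive quadratic term $t^2A^{1-2\ga}\asymp\gL^2/A$; this is the standard Gaussian Chernoff optimisation and there is nothing to ``balance across two regimes'' once one writes it this way. You flag this balancing as ``the main obstacle'' but do not resolve it, and the resolution is to abandon the constant-$\eps$ choice. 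Separately, the paper first reduces to $\gL\le A/2$ using the fact that the event is monotonely decreasing in $\gL$, which keeps $t/j^{\ga}$ bounded away from $1$ on the $j$-range $[A-\gL,A+\gL-1]$ and makes the two-term Taylor bounds on $\log(1\pm x)$ valid uniformly; without some such device your appeal to Euler-product asymptotics for $\gL$ near $A$ is unnecessary and would require extra justification about uniformity in the constant $c(\ga)$.
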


\begin{proof}
Denote the event on   the left-hand side of \eqref{lq} by $\cE$.
We may assume $B+\gG\ge A+\gL$, since otherwise $\cE$ is empty.
By an argument similar to the proof of \refT{newj1},
the event $\cE$ occurs if and only if 
$\sum_{i=A+\gL}^{B+\gG}\XV_i \ge \sum_{j=A-\gL}^{B-\gG-1}\YW_j$.
Note that this event is monotonely decreasing in $\gL$; hence it suffices to
prove \eqref{lq} for $\gL\le A/2$ (and $\gG/B\le\frac{1}4\gL/A$), since we
otherwise may decrease $\gL$ to $A/2$ (changing $c$); we make these assumptions.

By Markov's inequality and independence, for every $t\ge0$,
\begin{align}
  \Pr[\cE]
&
=\Pr\lrsqpar{\sum_{i=A+\gL}^{B+\gG}\XV_i - \sum_{j=A-\gL}^{B-\gG-1}\YW_j\ge0}
\le \E e^{t\sum_{i=A+\gL}^{B+\gG}\XV_i - t\sum_{j=A-\gL}^{B-\gG-1}\YW_j}
\nonumber\\&
=\prod_{i=A+\gL}^{B+\gG}\E e^{t \XV_i} \prod_{j=A-\gL}^{B-\gG-1}\E e^{-t\YW_j}.
\label{ql}
\end{align}
Furthermore, when $-\infty < t < i^\ga$,
\begin{align}
  \E e^{t \XV_i}=\E e^{t \YW_i} = \frac{1}{1-ti^{-\ga}}.
\end{align}
Consequently, \eqref{ql} implies, for $0\le t\le \frac12A^{-\ga}$ and some
constant $C\ge1$ (depending on $\ga$), using the convexity of $j\mapsto
j^{-\ga}$ in the fourth inequality,
\begin{align}
\ln  \Pr[\cE]
&
\le-\sum_{i=A+\gL}^{B+\gG}\ln(1-ti^{-\ga})
-\sum_{j=A-\gL}^{B-\gG-1}\ln(1+tj^{-\ga})
\nonumber\\&
\le\sum_{i=A+\gL}^{B+\gG}\bigpar{ti^{-\ga}+t^2i^{-2\ga}} 
-\sum_{j=A-\gL}^{B-\gG-1}\bigpar{tj^{-\ga}-t^2j^{-2\ga}} 
\nonumber\\&
\le 
-\sum_{j=A-\gL}^{A+\gL-1} tj^{-\ga}
+\sum_{i=B-\gG}^{B+\gG} ti^{-\ga}
+2\sum_{i=A-\gL}^{\infty}t^2i^{-2\ga}
\nonumber\\&
\le-2\gL t A^{-\ga} + (2\gG+1)t(B-\gG)^{-\ga}+C t^2 A^{1-2\ga}
\nonumber\\&
\le-t\gL A^{-\ga} +C t^2 A^{1-2\ga},
\label{qm}
\end{align}
where the last inequality follows because the assumptions imply
$(2\gG+1)/(B-\gG) \le \gL/A$.

Now choose $t:=(2C)\qw\gL A^{\ga-1}$; then \eqref{qm} yields \eqref{lq},
with $c=1/4C$.
\end{proof}

\begin{lemma}  \label{LD}
For any $A<n$ and $\gL$ with $|\gL|\le A-1$, for some universal constants
$C,C'$. 
\begin{align}
  \Pr\bigsqpar{BINGO(A+\gL,A-\gL;n,n)}
\le \frac{C}{\sqrt{n-A}}e^{-\gL^2/(n-A)}
\le \frac{C'}{\gL}e^{-\gL^2/2(n-A)}.
\label{ld}
\end{align}
\end{lemma}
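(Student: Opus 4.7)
The plan is to combine a uniform density bound for
$\gD := \sum_{i=A+\gL}^{n-1} \XV_i - \sum_{j=A-\gL}^{n-1}\YW_j$
(obtained by the same Fourier inversion that drove the proofs of Theorems \ref{bingoaann} and \ref{Tbingo}) with a Chernoff-type exponential tail estimate, tied together by an exponential tilt. By the symmetry swapping the two urns we may assume $\gL\ge0$. Theorem \ref{newj2} writes $BINGO(A+\gL,A-\gL;n,n)$ as the disjoint union $\{0\le-\gD<\XV_n\}\cup\{0\le\gD<\YW_n\}$, so it suffices to bound $\P[0\le\gD<\YW_n]$; the other term is handled symmetrically.

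Set $\gss := \Var(\gD) = \sum_{i=A+\gL}^{n-1} i^{-2\ga} + \sum_{j=A-\gL}^{n-1} j^{-2\ga}$. Each of the $2(n-A)$ terms is at least $n^{-2\ga}$, so $\gs n^\ga \ge \sqrt{2(n-A)}$. The Fourier argument behind \eqref{byg}--\eqref{byh} depended only on $|\phi_n|$, so it carries over to give the uniform density bound $\sup_x f_\gD(x) \le C/\gs$. For $0 < t \le (A-\gL)^\ga/2$ introduce the tilted density $\tilde f(x) := e^{tx} f_\gD(x)/M(t)$, where $M(t) := \E[e^{t\gD}]$; under the tilt $\gD$ is again a signed sum of independent exponentials, with rates $i^\ga-t$ and $j^\ga+t$, whose variances remain within a bounded factor of $\gss$, so the same Fourier estimate yields $\sup_x \tilde f(x) \le C'/\gs$. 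Since $\YW_n$ is independent of $\gD$ with $\P[\YW_n > x] = e^{-n^\ga x}$,
\begin{equation*}
\P[0 \le \gD < \YW_n] = M(t) \int_0^\infty \tilde f(x)\, e^{-(t+n^\ga)x} \dd x \le \frac{M(t)\sup\tilde f}{t + n^\ga} \le \frac{C'' M(t)}{\sqrt{n-A}}.
\end{equation*}

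What remains is to bound $M(t)$ at a near-saddle tilt. With $\mu := \E[\gD] = -\sum_{k=A-\gL}^{A+\gL-1} k^{-\ga} \le 0$, a Taylor expansion of $\log M(t) = -\sum\log(1-t/i^\ga) - \sum\log(1+t/j^\ga)$ gives $\log M(t) \le t\mu + O(t^2\gss)$ uniformly for $t \le (A-\gL)^\ga/2$, and the choice $t^*\asymp -\mu/\gss$ yields $M(t^*) \le \exp(-c\mu^2/\gss)$. A case split on whether $n-A\le A/(2\ga-1)$ shows $\mu^2/\gss \gtrsim \gL^2/(n-A)$ in both regimes: in the first, $\gss\sim 2(n-A)A^{-2\ga}$ and $|\mu|\sim 2\gL A^{-\ga}$ give $\mu^2/(2\gss)\sim\gL^2/(n-A)$ with asymptotic coefficient $1$; in the second, $\gss\sim\frac{2}{2\ga-1}A^{1-2\ga}$ gives $\mu^2/(2\gss)\sim(2\ga-1)\gL^2/A \ge \gL^2/(n-A)$, since $n-A\ge A/(2\ga-1)$. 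The second inequality in the lemma then follows from the elementary fact $xe^{-x^2/2}\le e^{-1/2}$ for all $x\ge0$, applied at $x=\gL/\sqrt{n-A}$.

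The principal technical obstacle is tracking constants tightly enough through the Taylor expansion and case analysis so that the coefficient $1$ (and not merely some $c(\ga)>0$) emerges in the exponent $e^{-\gL^2/(n-A)}$; this is particularly delicate in the boundary regime where $\gL$ is comparable to $A-1$, since then the near-saddle tilt $t^*$ is no longer small relative to $(A-\gL)^\ga$ and the quadratic approximation of $\log M$ becomes marginal, requiring either a direct analysis of the explicit MGF formula or absorption of a bounded multiplicative loss into the prefactor $C$.
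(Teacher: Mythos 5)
Your proposal takes a genuinely different route from the paper. The paper's proof couples the $\ga$-preferential-attachment chain $(X_k,Y_k)$ started at $(A+\gL,A-\gL)$ with an ordinary simple random walk $(X^*_k,Y^*_k)$ so that $|X_k-Y_k|\ge|X^*_k-Y^*_k|$ at every step; since $BINGO$ forces $|X_{2n}-Y_{2n}|=0$, this immediately gives $\Pr[BINGO]\le 2^{-(2n-2A)}\binom{2n-2A}{n-A-\gL}$, and both inequalities in \eqref{ld} then drop out of Stirling's formula. That argument is completely elementary, needs no continuous time, and automatically produces the sharp constant $1$ in the exponent $e^{-\gL^2/(n-A)}$. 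Your approach — continuous-time embedding, Fourier inversion for a uniform density bound, and an exponential tilt — is the natural continuation of the machinery used for Theorems \ref{bingoaann} and \ref{Tbingo}, but it turns out to be far more involved here and, as written, it does not close.

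The genuine gap is in the step ``$\mu^2/\gss\gtrsim\gL^2/(n-A)$ in both regimes.'' This is false when $A-\gL$ is small, and the failure is not a bounded multiplicative loss absorbable into $C$. Take $A-\gL=1$ (so $\gL=A-1$) and $n-A=\gL$. Then $|\mu|=\sum_{j=1}^{2A-2}j^{-\ga}$ and $\gss=\sum_{i\ge 2A-1}i^{-2\ga}+\sum_{j\ge1}j^{-2\ga}$ are both bounded sums, so $\mu^2/(2\gss)=O(1)$ uniformly in $A$, while $\gL^2/(n-A)=\gL\to\infty$. Thus your Chernoff exponent produces only $e^{-O(1)}$ where the lemma demands $e^{-\gL^2/(n-A)}=e^{-\gL}$, an error that grows without bound. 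The root cause is that when $A-\gL$ is small, the saddle point $t^*\asymp|\mu|/\gss$ is of the same order as $(A-\gL)^\ga$, so the quadratic Taylor truncation of $\log M$ is not merely marginal but catastrophically wrong: the true $\inf_t\log M(t)$ is of order $-\gL$, not $-O(1)$. (A direct computation for $A-\gL=1$, $n-A=\gL$ shows $\Pr[BINGO]=\prod_{j=1}^{n-1}j^{\ga}/(n^\ga+j^\ga)\le e^{-\ga\gL(1+o(1))}$, so the true rate is there; only the quadratic approximation loses it.) The case split $n-A\lessgtr A/(2\ga-1)$ does not cure this, because your asymptotics $\gss\sim\frac{2}{2\ga-1}A^{1-2\ga}$ and $|\mu|\sim 2\gL A^{-\ga}$ in the second regime quietly assume $\gL\ll A$, which is precisely what fails. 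A second, smaller issue: even in the favourable regime the crude Taylor bound yields at best $\log M(t^*)\le -\mu^2/(4\gss)$ (not $-\mu^2/(2\gss)$), which is short of the coefficient $1$ in the first inequality of \eqref{ld}. To make the MGF route work one would have to analyse $\inf_t\log M(t)$ exactly (not via the second-order expansion) and handle the near-boundary $\gL$ separately; at that point the paper's coupling-plus-Stirling argument is clearly the shorter path.
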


\begin{proof}
  Consider the Markov chain $(X_k,Y_k)_{2A}^\infty$, started at
$(X_{2A},Y_{2A})=(A+\gL,A-\gL)$.
We couple the chain with a simple random walk $(X^*_k,Y^*_k)_{2A}^\infty$,
also started at $(A+\gL,A-\gL)$, such that for every $k\ge 2A$,
\begin{equation}\label{dx}
  |X_k-Y_k|\ge |X_k^*-Y_k^*|;
\end{equation}
this can be achieved as follows.
If strict inequality holds in \eqref{dx}, so 
$  |X_k-Y_k|\ge |X_k^*-Y_k^*|+2$ since both sides have the same parity,
we may couple the next steps for the two chains arbitrarily. 
The same holds if 
$|X_k-Y_k|=|X_k^*-Y_k^*|=0$.
Finally, if $|X_k-Y_k|=|X_k^*-Y_k^*|>0$, we have to couple such that
if $|X^*_{k+1}-Y^*_{k+1}|=|X_k^*-Y_k^*|+1$, then
$|X_{k+1}-Y_{k+1}|=|X_k-Y_k|+1$; this is always possible, since the first
event has probability $1/2$, and the second has probability 
$\max\set{X_k^\ga,Y_k^\ga}/(X_k^{\ga}+Y_k^\ga)>1/2$.

Using this coupling, \eqref{dx} shows 
$  |X_{2n}-Y_{2n}|\ge |X_{2n}^*-Y_{2n}^*|$, and thus
\begin{align}
&  \Pr\bigsqpar{BINGO(A+\gL,A-\gL;n,n)}
=
  \Pr\bigsqpar{X_{2n}=Y_{2n}=n}
\nonumber\\&\qquad
\le   \Pr\bigsqpar{X^*_{2n}=Y^*_{2n}}
=
  \Pr\bigsqpar{\Bin(2n-2A,1/2)=n-A-\gL}
\nonumber\\&\qquad
=2^{-(2n-2A)}\binom{2n-2A}{n-A-\gL}
\end{align}
and \eqref{ld} follows by standard calculations using Stirling's formula.
\end{proof}

\section{A Basic Case}
\label{sec:basic}

Here we prove a modified version of Theorem \ref{t1}.  
Initially $\xi_1,\ldots,\xi_{2n-2}=\pm 1$ are uniform and independent.
We set 
\beq\label{defA} A = \lfloor\ln^{10}n\rfloor \eeq
We shall be splitting the walk into an initial part, until the coordinates
sum to $2A$, and a main part, until the coordinates sum to $2n$.  See
remark \ref{r1} for further comments.

We {\em condition} on $S_{2A-2}=0$ {\em and} $S_{2n-2}=0$.  
We may and shall consider the
$S_i$ in two regimes.  For $0\leq i\leq 2A-2$ the $S_i$ form a random
excursion, beginning ($i=0$) and ending ($i=2A-2$) at zero. 
For $2A-2 \leq i \leq 2n-2$ the $S_i$
form a random excursion beginning ($i=2A-2$) and ending ($i=2n-2$) at zero.
In the state space the walk begins at $(1,1)$, goes to $(A,A)$ and then goes
to $(n,n)$.  We note, importantly, that the two sides of the walk are
mutually independent excursions.  Let {\bf COND} denote this condition.
The function $L=L_{2n-2}$
splits naturally into two parts:
\begin{align}
\label{Linit} L^{init}& = \sum_{i=1}^{2A-2} \frac{S_i^2}{i^2},
\\
\label{modL} L^{main} &= \sum_{i=2A-1}^{2n-2} \frac{S_i^2}{i^2}.
\end{align}

\begin{theorem}\label{t4}  Under {\bf COND},
for $c \in (0,1)$,
\beq\label{6a} \Pr[L^{main} \leq c\ln n\mid\COND] = e^{-(K(c)+o(1))\ln n}  \eeq
with (as in Theorem \ref{t1})
\beq\label{7a} K(c) =\frac{(1-c)^2}{8c}  
\eeq
\end{theorem}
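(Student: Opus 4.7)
The plan is to compute the Laplace transform of $L^{main}$ under $\COND$ via a change of measure to the $\ga$-urn with parameter $\ga>1$, and then deduce the LDP by a Chernoff bound on one side and exponential tilting on the other. Under $\COND$ the second excursion from $(A,A)$ to $(n,n)$ is uniform over the $\binom{2n-2A}{n-A}$ lattice paths---i.e.\ the $\ga=0$ urn. For any $\ga>1$ the Radon--Nikodym derivative between the $\ga$-urn law $P_\ga$ and the uniform law $P_0$ on these paths factors as $G_A/F_A$, where $G_A:=\prod_{k=2A}^{2n-1}2C_k^\ga$ (with $C_k$ the incremented coordinate) is \emph{path-independent} and equals $2^{2n-2A}\bigpar{(n-1)!/(A-1)!}^{2\ga}$ (each integer in $[A,n-1]$ is incremented exactly once per axis), while $F_A:=\prod_{k=2A}^{2n-1}(X_k^\ga+Y_k^\ga)$ carries all path dependence.

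Writing $X_k=(k+\gD_k)/2$, $Y_k=(k-\gD_k)/2$ and using the even Taylor expansion $(1+u)^\ga+(1-u)^\ga=2+\ga(\ga-1)u^2+O(u^4)$, one obtains on paths where $\max_k|\gD_k/k|$ is small
\[
\log F_A=(2n-2A)\log 2+\ga\!\!\sum_{k=2A}^{2n-1}\!\log(k/2)+\lam L^{main}+\mathrm{err},\qquad \lam:=\tfrac{\ga(\ga-1)}{2},
\]
the sum $\sum\gD_k^2/k^2$ coinciding with $L^{main}$ up to an index shift absorbed in $\mathrm{err}$. Combining with the explicit $\log G_A$ and Stirling (in particular $\bigpar{(n-1)!}^2/(2n-1)!=1/(n\binom{2n}{n})$) collapses the path-independent prefactors to $(A/n)^{\ga/2}$, yielding $G_A/F_A\sim(A/n)^{\ga/2}e^{-\lam L^{main}}$ on the good event. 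Averaging over $P_0\mid\COND$, using the identity $E_{P_0}[G_A/F_A\xCOND]=\Pr_\ga[BINGO(A,A;n,n)]/\Pr_0[BINGO(A,A;n,n)]$, and inserting \refT{bingoaann} for the numerator together with the binomial asymptotic $\Pr_0[BINGO(A,A;n,n)]\sim 1/\sqrt{\pi(n-A)}$ for the denominator, yields, for each fixed $\ga>1$,
\[
\frac{1}{\ln n}\log E_{P_0}\bigsqpar{e^{-\lam L^{main}}\xCOND}\tend -\frac{\ga-1}{2}\qquad(\ntoo).
\]

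Parametrizing $\mu:=-\lam\le 0$, this identifies the limiting log-MGF $\Lam(\mu)=(1-\sqrt{1-8\mu})/4$. Its Legendre transform $\Lam^*(c)=\sup_{\mu\le 0}[\mu c-\Lam(\mu)]$ is attained at $\ga^*=(1+c)/(2c)>1$ for $c\in(0,1)$ and equals $(1-c)^2/(8c)=K(c)$, matching the claim. The Chernoff inequality at $\lam^*=\ga^*(\ga^*-1)/2$ delivers the upper bound $\Pr[L^{main}\le c\ln n\xCOND]\le n^{-K(c)+o(1)}$ at once. For the matching lower bound I would exponentially tilt to $P_{\ga^*}$: the same Laplace calculation, applied under the tilted law, pins down $c$ as the typical value of $L^{main}/\ln n$ under $P_{\ga^*}(\,\cdot\xCOND)$, so unwinding the change of measure on the concentrated window $\{|L^{main}-c\ln n|\le \eps\ln n\}$ produces the matching $n^{-K(c)-o(1)}$.

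The main technical obstacle is uniform control of the quartic error $\sum\gD_k^4/k^4$ in $\log F_A$ on exponent scale $\ln n$: on typical paths $|\gD_k|=O(\sqrt k)$ makes this $O(L^{main}/A)=o(1)$ given $A=\ln^{10}n$, but the atypical large-excursion paths must be cut off using Lemmas~\ref{LQ}--\ref{LD} to bound their joint contribution by exponentially small factors. A secondary obstacle is the concentration of $L^{main}/\ln n$ under the tilted law $P_{\ga^*}\mid\COND$ required for the lower bound, which can be settled either by a direct second-moment estimate on the tilted walk or by an appeal to the Gaussian functional limit analogous to \refT{Toi}. The choice $A=\floor{\ln^{10}n}$ is made precisely so that $\log(n/A)=(1+o(1))\ln n$ while $A\to\infty$ is rapid enough for \refT{bingoaann} to apply, and so that the initial segment contributes $O(\ln A)=o(\ln n)$ to $L$.
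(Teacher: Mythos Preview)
Your approach is essentially the paper's: the Radon--Nikodym derivative $G_A/F_A$ is exactly the paper's $BASE\cdot FIT$ decomposition \eqref{9a}--\eqref{13b}, your key identity $E_{P_0}[G_A/F_A\mid\COND]=\Pr_\ga[BINGO]/\Pr_0[BINGO]$ is \eqref{15b} rewritten, and the paper then invokes G\"artner--Ellis directly rather than unpacking it into Chernoff plus tilting as you do. One small correction: the quartic error control is carried out under the \emph{uniform} bridge law $P_0\mid\COND$, so the relevant tool is standard simple-random-walk large deviations (the paper's normal/weird split, declaring a path weird if $|S_i|>i^{0.99}$ for some $i\ge A$, which has probability $\le\exp(-i^{0.98}/2)$, subpolynomial in $n$ since $i\ge A=\ln^{10}n$, and then using $FIT\le 1$ on weird paths)---not Lemmas~\ref{LQ}--\ref{LD}, which bound $\ga$-urn BINGO probabilities and are used elsewhere in the paper.
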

\begin{remark}\label{r1}  There is considerable flexibility in the choice of the
breakpoint $A$.  The basic object is to protect against rare events.  Our
basic argument will breaks down when, say, $|S_i|\geq 0.01i$.  This occurs
with probability exponentially small in $i$.  However, Theorem \ref{t1}
deals with polynomially small (in $n$) probabilities.  Restricting to
$i\geq A$, exponentially small in $i$ is less than polynomially small
in $n$ and hence negligible.  The split at $A$ should be considered an
artifact of the proof and it is quite possible that an argument exists
that does not use this artifical split.
\end{remark}

Both the restriction to $i\geq A$ and the restriction to a random
excursion shall be later removed.

We shall actually find the asymptotics of the Laplace transform of $L^{main}$.
For notational convenience, given $\ah > 1$ we define
\beq\label{defl} \lam = \frac{\ah(\ah-1)}{2}  \eeq

\begin{theorem}\label{ta1}  For any $\ah > 1$
\beq\label{c1}
\E[e^{-\lam L^{main}}\mid\COND] = n^{-(\ah-1)/2+o(1)} \eeq
\end{theorem}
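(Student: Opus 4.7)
The plan is to recognize that on the main segment from $(A,A)$ to $(n,n)$, the ratio of path probabilities under the preferential-attachment chain and under the simple random walk is, up to an explicit power of $n/A$, essentially $e^{-\lam L^{main}}$, and then to read off the answer from \refT{bingoaann}. For each path $\pi$ from $(A,A)$ to $(n,n)$, with $(X_{i+2},Y_{i+2})$ the state before step $\xi_{i+1}$,
\begin{equation*}
R(\pi):=\prod_{i=2A-2}^{2n-3}\frac{2\,w_{i+1}^{\ah}}{X_{i+2}^{\ah}+Y_{i+2}^{\ah}}=\frac{\Pr_{PA}(\pi)}{\Pr_{SRW}(\pi)},
\end{equation*}
where $w_{i+1}\in\set{X_{i+2},Y_{i+2}}$ is the coordinate that $\xi_{i+1}$ increments. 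Summing the identity $\Pr_{SRW}(\pi\mid\COND)\,R(\pi)=\Pr_{PA}(\pi)/\Pr[S_{2n-2}=0\mid S_{2A-2}=0]$ over such $\pi$ yields
\begin{equation*}
\E[R\mid\COND]=\frac{\Pr[BINGO(A,A;n,n)]}{\Pr[S_{2n-2}=0\mid S_{2A-2}=0]}=n^{-\ah+1/2+o(1)},
\end{equation*}
by \refT{bingoaann} for the numerator and the local CLT for the denominator; the polylogarithmic factors from $A=\floor{\log^{10}n}$ are absorbed into the $o(1)$.

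It then remains to compare $R$ to $e^{-\lam L^{main}}$. Writing $X_{i+2}=((i+2)+S_i)/2$, $Y_{i+2}=((i+2)-S_i)/2$ and Taylor expanding in $s_i:=S_i/(i+2)$ gives
\begin{equation*}
\log R=\ah\sum_i\xi_{i+1}s_i-\tfrac{\ah^2}{2}\sum_i s_i^2+O\Bigpar{\sum|s_i|^3}.
\end{equation*}
The quadratic sum equals $L^{main}+o(1)$ after replacing $(i+2)^2$ by $i^2$. For the linear sum, use $\xi_{i+1}S_i=(S_{i+1}^2-S_i^2-1)/2$ and Abel summation; the boundary contributions vanish exactly because of the conditioning $S_{2A-2}=S_{2n-2}=0$ under $\COND$, leaving
\begin{equation*}
\ah\sum_i\xi_{i+1}s_i=\tfrac{\ah}{2}\sum_j\frac{S_j^2}{(j+1)(j+2)}-\tfrac{\ah}{2}\log(n/A)+o(1)=\tfrac{\ah}{2}L^{main}-\tfrac{\ah}{2}\log(n/A)+o(1).
\end{equation*}
Using the identity $\ah/2+\lam=\ah^2/2$, everything collapses to $\log R=-\lam L^{main}-(\ah/2)\log(n/A)+o(1)$, so $R=(A/n)^{\ah/2}e^{-\lam L^{main}}(1+o(1))$ on typical paths. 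Combining with the first paragraph yields $\E[e^{-\lam L^{main}}\mid\COND]\sim(n/A)^{\ah/2}\E[R\mid\COND]=n^{-(\ah-1)/2+o(1)}$, which is \eqref{c1}.

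The main obstacle is making these $o(1)$'s hold uniformly enough in $\pi$ to survive both the conditional expectation and the required lower bound. On the good event $\cE:=\set{|S_i|<0.01\,i\text{ for all }2A-1\le i\le 2n-2}$, the cubic Taylor error $\sum|s_i|^3$ and the replacement errors (of $1/((j+1)(j+2))$ by $1/j^2$, and of $1/(i+2)^2$ by $1/i^2$) combine to $O(A\qqw)=o(1)$ deterministically. The complementary event $\cE^c$ has unconditional SRW probability $\sum_{i\ge A}e^{-ci}=n^{-\go(1)}$ by Chernoff, hence also $\Pr[\cE^c\mid\COND]=n^{-\go(1)}$ since $\Pr[\COND]\ge n^{-O(1)}$. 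The upper bound on $\E[e^{-\lam L^{main}}\mid\COND]$ then follows immediately from $e^{-\lam L^{main}}\le 1$. For the matching lower bound one must also show that $\E[R\,\mathbf1_{\cE^c}\mid\COND]=\Pr_{PA}[\cE^c\cap BINGO(A,A;n,n)]/\Pr[S_{2n-2}=0\mid S_{2A-2}=0]$ is much smaller than $\E[R\mid\COND]$; this is where \refL{LQ} and \refL{LD} enter, showing that once the PA chain drifts well off the diagonal it returns to $(n,n)$ with probability much smaller than in the on-diagonal case, so that the atypical PA paths to $(n,n)$ constitute a super-polynomially negligible fraction of $BINGO(A,A;n,n)$.
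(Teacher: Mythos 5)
Your overall strategy is essentially the paper's: compare path probabilities under the preferential-attachment chain and the simple random walk, relate that ratio to $e^{-\lam L^{main}}$ by Taylor expansion, plug in the $BINGO$ estimate from \refT{bingoaann}, and discard atypical paths. One genuine structural difference: you keep the path-dependent numerator $w_{i+1}^\ah$ inside $R(\pi)$, which produces a linear term $\ah\sum\xi_{i+1}s_i$, and you then eliminate it via the identity $\xi_{i+1}S_i=(S_{i+1}^2-S_i^2-1)/2$ plus Abel summation, using the $\COND$ boundary conditions to kill the endpoint terms. The paper avoids the linear term entirely by observing that $\prod_i w_{i+1}^\ah$ is the same for every path from $(A,A)$ to $(n,n)$, so it can be absorbed into the path-independent constant $BASE$ defined in \eqref{13b}, leaving only the denominator factor $FIT_i$ in \eqref{9a}, whose expansion \eqref{10.5a} is purely quadratic. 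Your Abel-summation computation is correct and arrives at the same destination (it is, in effect, a re-derivation of the Stirling estimate for $BASE\cdot\binom{2(n-A)}{n-A}$), but the paper's route is cleaner and is worth knowing. Also, your invocation of \refL{LQ} and \refL{LD} to handle $\cE^c$ is unnecessary; the paper's argument is the elementary bound $FIT\le1$ (equivalently $R\le 2^{2(n-A)}BASE$) together with $\Pr[\cE^c]=n^{-\omega(1)}$, which already makes the weird-path contribution negligible compared with the target $n^{-(\ah-1)/2}$. (Those two lemmas are needed only in \refS{section:conditionallaw}.)

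There is a real gap in your error control, and it comes from the choice of good event. You take $\cE:=\set{|S_i|<0.01\,i}$, which only gives $|s_i|<0.01$ but does \emph{not} give $|s_i|\to0$. The cubic Taylor error is bounded by $\sum|s_i|^3\le\bigpar{\max_i|s_i|}\sum_i s_i^2$, which on $\cE$ is a \emph{constant} multiple of $\sum_is_i^2\approx L^{main}$. Since the dominant contribution to $\E[e^{-\lam L^{main}}\xCOND]$ comes from paths with $L^{main}\asymp c\ln n$ (\refR{whyLaplace}), this error is $\Theta(\ln n)$, not $o(1)$, and not even $o(\ln n)$ — so the factor $e^{O(\sum|s_i|^3)}$ is $n^{\Theta(1)}$ and wrecks the exponent. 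Your claim that $\sum|s_i|^3=O(A\qqw)$ deterministically on $\cE$ is simply false (take $S_i\approx 0.01\,i$ for all $i$ and the sum is of order $n$). What the argument actually requires is $\max_{i\ge 2A}|s_i|=o(1)$ \emph{uniformly} on the good event, so that the Taylor error is $o(1)\cdot L^{main}=o(\ln n)$ and hence absorbed into $n^{o(1)}$; this is exactly why the paper defines ``weird'' by $|S_i|>i^{0.99}$ (yielding $|s_i|<i^{-0.01}\le(2A)^{-0.01}\to0$) while keeping $\Pr[\text{weird}]\le\sum_{i\ge A}\exp(-i^{0.98}/2)=n^{-\omega(1)}$ thanks to $A=\floor{\ln^{10}n}$. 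Replacing $0.01\,i$ by $i^{0.99}$ (or any threshold $i^{1-\delta}$ with $\delta>0$) fixes this; with your current threshold, the key step $\log R=-\lam L^{main}-\frac{\ah}{2}\log(n/A)+o(1)$ does not hold even approximately on the paths that matter.
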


As $\ah$ ranges over $(1,\infty)$, $t:=-\lam$ ranges over the negative 
reals.  Theorem \ref{ta1} then gives the asymptotics of the Laplace transform
of $L^{main}\mid\COND$:
letting
$\WL_n$ be $L^{main}\mid\COND$ for a particular value of $n$,
\begin{equation}\label{LDP0new}
  \lim_\ntoo\frac{1}{\ln n} \ln \E e^{t \WL_n} = \Lam(t):= -(\ah-1)/2,
\qquad t<0.
\end{equation}
Then (as done in more detail in \S \ref{sec:new_proof}), 
the Legendre transform of $\Lam(t)$ is, by a simple calculation,
\begin{equation}\label{gLxnew}
  \gLx(x)=K(x)
\end{equation}
and the
G\"artner--Ellis theorem 
\cite[Theorem 2.3.6]{DZ}
yields the asymptotics of $\Pr[\WL_n\leq c\ln n]$ of Theorem \ref{t4}.

\begin{remark}\label{whyLaplace}  
The main contribution to 
$\E[e^{-\lam L^{main}}\mid\COND]$ 
comes when $L^{main}\approx c \ln n$ with $c=(2\ah -1)^{-1}$, i.e., $\ah =
\frac{c+1}{2c}$. 
\end{remark}

Now we study $\Pr[BINGO(A,A;n,n)]$ as the sum of the probabilities of all paths {\bf P}
from $(A,A)$ to $(n,n)$.  Let $P_{2A}=(A,A),\ldots,P_{2n}=(n,n)$ denote the points of path
{\bf P}, $P_i$ having some of coordinates $i$, $2A\leq i \leq 2n$.  Let $P_i=(x_i,y_i)$.
Critically, we parametrize, as in \eqref{XYgD},
\beq\label{8a}  x_i=\frac{i+\gd_i}{2} \mbox{ so that } y_i= \frac{i-\gd_i}{2}  \eeq
Here $\gd_i$ reflects the ``distance" of the path from the main diagonal.
By $\Pr({\bf P})$ we mean the probability of following precisely the path {\bf P}.
The $\Pr({\bf P})$ vary in an interesting way.  The numerators multiply out the same with
factors $i^{\ah}$ for $A\leq i < n$ and $j^{\ah}$ for $A\leq j < n$.  The denominator factor
$x_i^{\ah}+y_i^{\ah}$ is minimal when $x_i=y_i=\frac{i}{2}$.  We define
\beq\label{9a} FIT_i = \frac{2(i/2)^{\ah}}{x_i^{\ah} + y_i^{\ah}}  \eeq
Then $FIT_i= f(\eps)$ where $\eps= \gd_i/i$ and
\beq\label{10a}  f(\eps) = \frac{2}{(1+\eps)^{\ah}+(1-\eps)^{\ah}}  
\le1
\eeq
We shall make critical use of the asymptotics
\beq\label{10.5a} \ln(f(\eps)) \sim -\lam\eps^2 \mbox{ as } \eps\ra 0 \eeq
Set 
\beq\label{11a} FIT= FIT({\bf P}) = \prod_{i=2A}^{2n-1} FIT_i  \eeq
Each $FIT_i\leq 1$ and hence $FIT\leq 1$.  A low $FIT$ tells us that the path {\bf P} is
relatively unlikely.  Roughly, paths {\bf P} which stay close to the main diagonal will have
a high $FIT$, meaning they will be more likely than those the stray far from the main diagonal.
We now split
\beq\label{12b} \Pr[{\bf P}] = BASE\cdot FIT\eeq
Here $BASE$ is what {\bf P} would be if the terms $x_i^{\ah}+y_i^{\ah}$ were replaced by $2(i/2)^{\ah}$
and $FIT$ is the additional factor with the actual $x_i,y_i$, $2A\leq i < 2n$.
Then the denominator would be precisely the product of $2(i/2)^{\ah}$ over $2A\leq i < 2n$.  That is
\beq\label{13b}
BASE = \frac{\prod_{i=A}^{n-1}i^{\ah}\prod_{j=A}^{n-1}j^{\ah}} {\prod_{i=2A}^{2n-1} 2(i/2)^{\ah}} \eeq
$BINGO(A,A;n,n)$ is the sum of $BASE\cdot FIT({\bf P})$ over all 
$\binom{2(n-A)} {n-A}$ paths from
$(A,A)$ to $(n,n)$.  We rewrite (\ref{12b}) with the {\em exact} formula
\beq\label{15b} \Pr[BINGO(A,A;n,n)] = BASE\cdot 
\binom{2(n-A)}{n-A} \cdot \E[FIT({\bf P})]  
\eeq
where expectation is over a uniformly chosen path from $(A,A)$ to $(n,n)$.  Stirling's Formula
asymptotics 
give
\beq
BASE = n^{-\ah/2+o(1)}2^{-2(n-A)}
\eeq
and
\beq\label{16b} \Pr[BINGO(A,A;n,n)] 
= n^{-(1+\ah)/2+o(1)} \cdot \E[FIT({\bf P})]  
\eeq
Applying (\ref{7.2a}) we deduce 
\beq\label{17b}  \E[FIT({\bf P})] = n^{(1-\ah)/2+o(1)} . \eeq
\begin{remark}  We had originally hoped  to apply (\ref{16b}) in reverse.  That is, a
combinatorial (or other) argument for the asymptotics of $\E[FIT({\bf P})]$ would 
yield an alternate proof, a non-Book Proof, for $\Pr[BINGO]$.  It was surprising that
the continuous time approach led to (\ref{17b}), which is quite difficult to prove
directly.
\end{remark}

Now we try to estimate $FIT$ using (\ref{10.5a}).  The technical difficulty as that we do not
have $\eps = \gd_i/i = o(1)$ tautologically.  Call a walk {\bf P} {\em weird} if $|S_i|>i^{0.99}$
for some $A\leq i \leq n$.  Otherwise call {\bf P} {\em normal}.  Large deviation results give
that the probability {\bf P} is weird for a particular $i$ is at most $\exp[-i^{0.98}/2]$.  We
only look at $i\geq A$. We have selected $A$ so that this probability is subpolynomial.   As
$FIT({\bf P})\leq 1$ tautologically, the affect on $\E[FIT({\bf P})]$ of weird {\bf P}
is negligible.  Hence in calculating $\E[FIT({\bf P})]$ we can restrict ourselves to normal
{\bf P}.  Normal {\bf P} have $\eps = |S_i|/i < i^{-0.01} = o(1)$ uniformly.  We apply
(\ref{10.5a}), each $\ln(FIT_i)\sim -\lam S_i^2i^{-2}$ so that 
$\ln(FIT) \sim -\lam L^{main}$.  Therefore
\beq\label{18b}  \E[e^{-\lam L^{main}}\mid\COND] = n^{(1-\ah)/2 + o(1)} \eeq
as desired, giving Theorem \ref{ta1} and hence Theorem \ref{t4}.

We now extend Theorem \ref{t4} to $L=L^{init}+L^{main}$.  As $L^{init}\geq 0$,
\beq\label{19b} \Pr[L\leq c\ln n\xCOND] \leq \Pr[L^{main}\leq c\ln n\xCOND] 
\leq e^{-(K(c)+o(1))\ln n}  .\eeq
Now we show $L^{init}$ under {\bf COND} is appropriately negligible.  We have
$\xi_1,\ldots,\xi_{2A-2}=\pm 1$ conditioned on their sum being zero.
$S_i=\xi_1+\ldots+\xi_i$.  A standard second moment calculation gives
the precise value $E[S_i^2] = i - i(i-1)(2A-3)^{-1}$ but we shall
only use $E[S_i^2]\leq i$.  (That is, the conditioning lowers the
variance.)  Then, using (\ref{defA})
\beq\label{20b} \E[L^{init}\xCOND] \leq \sum_{i=1}^{2A-2} \frac{i}{i^2} \leq (10+o(1))\ln\ln n .\eeq
By Markov's Inequality, with $n$ sufficiently large,  $L^{init}\leq 21\ln\ln n$ with probability at least $0.5$.
We have created $L^{init},L^{main}$ to be independent so with probability at least
$0.5\cdot e^{-(K(c)+o(1))\ln n}$ both $L^{init}\leq 21\ln\ln n$ and $L^{main}\leq c\ln n$.
Hence
\beq\label{21b} \Pr[L\leq c\ln n + 21\ln\ln n\xCOND] \geq \frac{1}{2}e^{-(K(c)+o(1))\ln n} \eeq
The multiplicative factor of $\frac{1}{2}$ and the additive factor of $21\ln\ln n$ get
absorbed in the asymptotics, giving
\beq\label{22b} \Pr[L\leq c\ln n\xCOND] \geq e^{-(K(c)+o(1))\ln n}  \eeq

We have shown:
\begin{theorem}\label{e23} Under {\bf COND},
\beq\label{23b} \Pr[L\leq c\ln n\mid \COND] = e^{-(K(c)+o(1))\ln n} . \eeq
\end{theorem}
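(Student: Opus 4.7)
The plan is to bootstrap from Theorem~\ref{t4} by showing that $L^{init}$ is negligible compared to $\ln n$. The essential structural observation is that under $\COND$ the walk splits at index $2A-2$ into two \emph{independent} excursions (the conditioning on $S_{2A-2}=0$ decouples past and future), so $L^{init}$ and $L^{main}$ are conditionally independent.

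The upper bound is free: since $L \ge L^{main}$, the event $\{L\le c\ln n\}$ is contained in $\{L^{main}\le c\ln n\}$, and Theorem~\ref{t4} immediately yields
\begin{equation}
\Pr[L\le c\ln n\mid\COND]\le e^{-(K(c)+o(1))\ln n}.
\end{equation}

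For the lower bound, I would first control $\E[L^{init}\mid\COND]$. Because the conditioning on $S_{2A-2}=0$ is exchangeable and can only reduce the variance of an intermediate $S_i$, one gets $\E[S_i^2\mid\COND]\le i$ for $1\le i\le 2A-2$. With $A=\lfloor\ln^{10}n\rfloor$ this gives
\begin{equation}
\E[L^{init}\mid\COND]\le \sum_{i=1}^{2A-2}\frac{1}{i}=O(\ln A)=O(\ln\ln n),
\end{equation}
so Markov's inequality produces $\Pr[L^{init}\le 21\ln\ln n\mid\COND]\ge \tfrac12$ for large $n$. Combining with Theorem~\ref{t4} via the conditional independence of $L^{init}$ and $L^{main}$,
\begin{equation}
\Pr[L\le c\ln n+21\ln\ln n\mid\COND]\ge \tfrac12\, e^{-(K(c)+o(1))\ln n}.
\end{equation}

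The last step is cosmetic: $21\ln\ln n=o(\ln n)$, so writing the bound on $L$ as $c'\ln n$ with $c'=c+o(1)$ and using continuity of $K$ at $c$, the additive $\ln\ln n$ shift and the multiplicative $\tfrac12$ are both absorbed into the $o(1)$ in the exponent. The only step that deserves real care is verifying the independence of $L^{init}$ and $L^{main}$ under $\COND$; once one notes that $\COND$ includes $S_{2A-2}=0$, the two pieces of the walk are genuinely independent conditioned excursions, and every other estimate is routine.
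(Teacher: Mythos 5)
Your proof is correct and follows essentially the same route as the paper: the upper bound via $L\ge L^{main}$, the lower bound via the variance-reduction bound $\E[S_i^2\mid\COND]\le i$ (the paper even records the exact conditional second moment), Markov's inequality, the conditional independence of $L^{init}$ and $L^{main}$ given $\COND$, and absorption of the $O(\ln\ln n)$ additive and constant multiplicative corrections into the $o(1)$ exponent.
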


\section{The Lower Bound}
\label{sec:lower}

Let $|\Lam|\leq \sqrt{A}$, $|\Gam| \leq \sqrt{n}$.  We generalize
the Basic Case.
Initially $\xi_1,\ldots,\xi_{2n-2}=\pm 1$ are uniform and independent.
Here we {\em condition} on $S_{2A-2}=2\Lam$ {\em and} $S_{2n-2}=2\Gam$.
We may and shall consider the
$S_i$ in two regimes.  For $0\leq i\leq 2A-2$ the $S_i$ form a random
excursion, beginning ($i=0$) at 0  and ending ($i=2A-2$) at $2\Lam$. 
For $2A-2 \leq i \leq 2n-2$ the $S_i$
form a random excursion beginning ($i=2A-2$) at $2\gL$
and ending ($i=2n-2$) at $2\Gam$.
As in \refS{sec:basic}, the two excursions are independent.
Let {\bf COND}($\Lam$,$\Gam$) denote this condition.

\begin{theorem}\label{e23near} 
Uniformly in $|\Lam|\leq \sqrt{A}$, $|\Gam| \leq \sqrt{n}$,
under {\bf COND($\Lam$,$\Gam$)},
\beq\label{23bnear} 
\Pr[L\leq c\ln n\mid {\mathbf{COND}(\Lam,\Gam)}] 
= e^{-(K(c)+o(1))\ln n}  \eeq
\end{theorem}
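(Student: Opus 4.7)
The plan is to mirror the proof of \refT{e23} in \refS{sec:basic} step by step, checking that every asymptotic now holds uniformly over $|\gL|\le\sqrt{A}$ and $|\gG|\le\sqrt{n}$.  I split $L=L^{\mathrm{init}}+L^{\mathrm{main}}$, work with the Laplace transform $\E[e^{-\lam L^{\mathrm{main}}}\mid\CONDLG]$ with $\lam$ as in \eqref{defl}, and conclude via the G\"artner--Ellis passage of \eqref{LDP0new}--\eqref{gLxnew}.  The cornerstone identity becomes
\beq\label{keyid_proposal}
\Pr[BINGO(A+\gL,A-\gL;n+\gG,n-\gG)]
=BASE(\gL,\gG)\cdot\binom{2(n-A)}{n-A-\gG+\gL}\cdot \E[FIT({\bf P})],
\eeq
where ${\bf P}$ is a uniformly chosen lattice path from $(A+\gL,A-\gL)$ to $(n+\gG,n-\gG)$, the factor $BASE(\gL,\gG)$ is the analogue of \eqref{13b} with the numerator products now running over the shifted index ranges, and $FIT$ is defined exactly as in \eqref{9a}--\eqref{11a}.

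The first main step is to upgrade the intermediate asymptotic $\E[FIT({\bf P})]=n^{(1-\ah)/2+o(1)}$ to one uniform in $\gL,\gG$.  Stirling applied to $\prod_{k=A+\gL}^{n+\gG-1}k^\ah\prod_{k=A-\gL}^{n-\gG-1}k^\ah$ shows that the $\gG\log n$ and $\gL\log A$ corrections from the two products cancel between them, while the residual terms of size $O(\gG^2/n)+O(\gL^2/A)$ are bounded, giving $BASE(\gL,\gG)=n^{-\ah/2+o(1)}\cdot 2^{-2(n-A)}$ uniformly.  A standard Stirling estimate then gives $\binom{2(n-A)}{n-A-\gG+\gL}=2^{2(n-A)}\,n^{-1/2+o(1)}$ uniformly, since $(\gG-\gL)^2/(n-A)=O(1)$.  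Inserting these together with the uniform rough bound \eqref{nearrough} into \eqref{keyid_proposal} yields $\E[FIT({\bf P})]=n^{(1-\ah)/2+o(1)}$ uniformly over the allowed $\gL,\gG$.

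The weirdness/normality step then carries over essentially verbatim: the local CLT gives $\Pr[\CONDLG]\ge c/\sqrt{An}$ uniformly, while the unconditional bound $\Pr[|S_i|>i^{0.99}]\le e^{-i^{0.98}/2}$ remains subpolynomial for $i\ge A=\lfloor\log^{10}n\rfloor$, so absorbing the factor $1/\Pr[\CONDLG]\le O(n)$ leaves $\Pr[\text{weird}\mid\CONDLG]$ subpolynomial.  On normal paths, $\eps_i=\gd_i/i=O(i^{-0.01})$ uniformly, so $\ln FIT\sim-\lam L^{\mathrm{main}}$ by \eqref{10.5a}, and hence $\E[e^{-\lam L^{\mathrm{main}}}\mid\CONDLG]=n^{(1-\ah)/2+o(1)}$ uniformly; G\"artner--Ellis then produces the LDP for $L^{\mathrm{main}}$ with rate $K(c)$.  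For $L^{\mathrm{init}}$, a direct second-moment computation for the bridge from $0$ to $2\gL$ on $[0,2A-2]$ gives $\E[S_i^2\mid\CONDLG]=O(i)$ for $i\le 2A-2$ (using $\gL^2/A=O(1)$), so $\E[L^{\mathrm{init}}\mid\CONDLG]=O(\log\log n)$, and because $L^{\mathrm{init}}$ and $L^{\mathrm{main}}$ remain conditionally independent under $\CONDLG$ (the two conditioning events depend on disjoint segments of the walk), the Markov-plus-independence absorption of \eqref{19b}--\eqref{22b} goes through unchanged.  The main technical obstacle is to insist on uniform rather than merely pointwise asymptotics throughout; the key inputs are the uniformity already built into \eqref{nearrough} and the observation that every quadratic correction $\gL^2/A$, $\gG^2/n$, $(\gG-\gL)^2/(n-A)$ is $O(1)$ in the allowed range.
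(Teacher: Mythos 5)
Your proposal is correct and follows the same route the paper takes: the paper's own proof of this theorem is the one-line remark that it "follows the same lines as Theorem \ref{e23}" with the pointwise BINGO estimate replaced by the off-diagonal version \eqref{nearrough}, and you have correctly unpacked that remark. Your shifted identity \eqref{keyid_proposal} matches the analogue of \eqref{15b}, the cancellation of the $\gG\ln n$ and $\gL\ln A$ terms between the two products in $BASE(\gL,\gG)$ is exactly the mechanism that keeps $BASE$ at $n^{-\ah/2+o(1)}2^{-2(n-A)}$ uniformly, the $O(1)$ bounds on $\gL^2/A$, $\gG^2/n$, $(\gG-\gL)^2/(n-A)$ carry the uniformity through Stirling, and your treatment of the weirdness event (via $\Pr[\text{weird}\mid\CONDLG]\le\Pr[\text{weird}]/\Pr[\CONDLG]$ with $1/\Pr[\CONDLG]=O(n)$) and of $L^{\mathrm{init}}$ (using $\E[S_i^2\mid\CONDLG]=O(i)$ for the bridge from $0$ to $2\gL$) are both correct. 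Your version is actually more explicit than the paper's about where uniformity is needed, which is the only nontrivial extra content; no genuinely different approach is involved.
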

\begin{proof}
  This follows the same lines as Theorem \ref{e23}.  
The critical preferential attachment Theorem \ref{bingoaann} and \eqref{7.2a}
are replaced by \refC{Cbingo0} and \eqref{nearrough}.
\end{proof}

From Theorem \ref{e23near} we derive the lower bound of Theorem \ref{t1}.

\begin{theorem}\label{t1lower}
For $c \in (0,1)$,
\beq\label{2alower} \Pr[L \leq c\ln n] \geq e^{-K(c)+o(1))\ln n}  \eeq
with
\beq\label{3ax} K(c) =\frac{(1-c)^2}{8c}  
\eeq
\end{theorem}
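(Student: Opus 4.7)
The plan is to deduce \eqref{2alower} directly from \refT{e23near} via a disjoint-decomposition argument on the two ``pinning'' variables $S_{2A-2}$ and $S_{2n-2}$. The key point is that \refT{e23near} already provides the desired lower bound on a conditional probability, uniformly over a wide range of conditionings, so the only remaining task will be to check that the union of the admissible conditioning events carries non-negligible probability.

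Concretely, let $R$ be the set of integer pairs $(\Lam, \Gam)$ with $|\Lam| \le \sqrt{A}$ and $|\Gam| \le \sqrt{n}$ and with correct parities so that the event ${\mathbf{COND}(\Lam,\Gam)} = \set{S_{2A-2} = 2\Lam,\ S_{2n-2} = 2\Gam}$ is nonempty. Since these events are pairwise disjoint as $(\Lam,\Gam)$ varies over $R$, one would write
\begin{equation*}
\Pr[L \le c \ln n]
\ge \sum_{(\Lam,\Gam) \in R}
\Pr\bigsqpar{L \le c\ln n \mid {\mathbf{COND}(\Lam,\Gam)}}\,
\Pr\bigsqpar{{\mathbf{COND}(\Lam,\Gam)}}.
\end{equation*}
Applying the uniform lower bound from \refT{e23near} to each conditional probability and factoring the common $e^{-(K(c)+o(1))\ln n}$ out of the sum, what remains is
\begin{equation*}
\sum_{(\Lam,\Gam) \in R} \Pr\bigsqpar{{\mathbf{COND}(\Lam,\Gam)}}
= \Pr\bigsqpar{|S_{2A-2}| \le 2\sqrt A,\ |S_{2n-2}| \le 2\sqrt n}.
\end{equation*}

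The last step is to show that this residual probability is bounded below by a positive constant. Here I would use that $S_{2A-2}$ and $S_{2n-2} - S_{2A-2}$ are independent to lower-bound it by $\Pr[|S_{2A-2}| \le 2\sqrt A]\cdot \Pr[|S_{2n-2} - S_{2A-2}| \le \sqrt n]$; each factor converges to a strictly positive limit by the central limit theorem for simple random walk, since $\sqrt A$ is of the same order as $\sqrt{\Var(S_{2A-2})}$ and similarly for the second factor. The resulting positive constant is absorbed into the $o(1)$ in the exponent, yielding \eqref{2alower}. There is no serious obstacle here: the uniformity in $\Lam, \Gam$ that makes the factoring step work is already part of the statement of \refT{e23near}, and the CLT input is entirely standard.
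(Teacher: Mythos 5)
Your proof is correct and follows essentially the same decomposition-over-$\mathbf{COND}(\Lam,\Gam)$ argument as the paper, differing only in that you spell out the $\Omega(1)$ total mass of the conditioning events via independence and the CLT, where the paper simply asserts it.
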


\begin{proof}
We split (sum over
$|\Lam|\leq \sqrt{A}$, $|\Gam| \leq \sqrt{n}$)  
\beq\label{near2}
\Pr[L\leq c\ln n] \geq \sum_{\Lam,\Gam}\Pr[L\leq c\ln n\mid\CONDLG]\Pr[\CONDLG]
\eeq

These conditionings are disjoint.  An unrestricted random walk has probability $\Omega(1)$ of
having these ``reasonable" values at $2A-2$ and $2n-2$,  so the sum of the probabilities of {\bf COND}
is $\Omega(1)$.  
From Theorem \ref{e23near} the conditional probabilities of $L\leq c\ln n$ are
all bounded from below.
\end{proof}

\section{The Upper Bound}
\label{sec:upper}

We employ coupling arguments to give upper bounds on the large deviation of $L$.

\begin{theorem}\label{firstcoupling}
For any $\Lam$ with $|\Lam|\leq A$ and any $z$
\beq\label{upper1}  \Pr[L^{main}\leq z\mid\COND (\Lam,0)] 
\leq \Pr[L^{main}\leq z\mid\COND (0,0)]  \eeq
\end{theorem}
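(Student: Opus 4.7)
The plan is to prove the CDF inequality $F_\Lambda(z) \le F_0(z)$ by constructing a coupling of the two random walk bridges arising in the conditional distributions. Under $\COND(\Lambda, 0)$ and $\COND(0, 0)$, the subwalk $(S_i)_{i=2A-2}^{2n-2}$ is a simple random walk bridge---either $B^\Lambda$ from $2\Lambda$ to $0$ or $B^0$ from $0$ to $0$---of length $N = 2n-2A$. Since $L^{main}$ depends only on this segment and only through its squared values, the theorem reduces to constructing a coupling $(B^\Lambda, B^0)$ with pathwise $L^{main}(B^\Lambda) \ge L^{main}(B^0)$.

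First, I would construct the natural monotone Markov coupling, driving both chains at each step by a common uniform $U_t$ and using that a bridge's up-step probability $(r-x)/(2r)$ is decreasing in the current position $x$. This yields $B^\Lambda_t \ge B^0_t$ pointwise, since $B^\Lambda_0 = 2\Lambda \ge 0 = B^0_0$ and the coupling preserves the inequality at each transition (the gap is always even and cannot cross zero in a single step). Next, to control the squared values, I would exploit the sign symmetry $B^0 \eqd -B^0$: by an adaptive sign flip of the $B^0$-segment at suitable stopping times, I would produce a modified process $\widehat B^0$ whose distribution agrees with that of $|B^0|$ (so $L^{main}(\widehat B^0)$ has the same law as $L^{main}(B^0)$) and which in the coupling satisfies $\widehat B^0_t \le B^\Lambda_t$ for every $t$.

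Given the augmented coupling with $\widehat B^0_t \le B^\Lambda_t$, it follows that $(B^0_t)^2 = (\widehat B^0_t)^2 \le (B^\Lambda_t)^2$ for every $t$, hence $L^{main}(B^\Lambda) \ge L^{main}(B^0)$ almost surely, which gives the stochastic dominance and thus the CDF inequality.

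The main obstacle is the sign-reflection step: the monotone coupling alone does not guarantee $(B^\Lambda_t)^2 \ge (B^0_t)^2$, because $B^0_t$ can be more negative than $B^\Lambda_t$ is positive. Small discrete examples confirm that some pairings from the monotone coupling must be reassigned---high-$L^{main}$ $B^0$-bridges should be matched to high-$L^{main}$ $B^\Lambda$-bridges rather than to the lowest ones---and the technical heart of the proof is to carry out this reassignment via an explicit adaptive stopping rule that preserves the $B^0$-marginal (using the strong Markov property and sign-symmetry of the bridge) while enforcing $|B^0_t| \le B^\Lambda_t$ pathwise. A combinatorial cycle-exchange argument on the monotone coupling provides an alternative route to achieve the same effect.
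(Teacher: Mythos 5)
Your high-level strategy --- construct a coupling of the two conditioned bridges under which $L^{main}$ is pathwise dominated, then read off the CDF inequality --- is exactly the paper's. But the execution you sketch has two concrete problems, and it is missing the elementary observation that makes the paper's coupling work.

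Your stated target inequality $\widehat B^0_t \leq B^\Lambda_t$ with $\widehat B^0 \eqd |B^0| \geq 0$ cannot be enforced: the signed bridge $B^\Lambda$ from $2\Lambda$ to $0$ dips below zero with positive probability, so it cannot dominate a nonnegative process pathwise. The correct target is $|B^0_t| \leq |B^\Lambda_t|$, since $L^{main}$ sees only the squares. Moreover, the monotone coupling is a detour that cannot be repaired by sign flips: a legal sign flip of $B^0$ must occur at a zero of $B^0$ (to preserve the law and the $\pm 1$ step structure), and any such flip leaves the path $(|B^0_t|)_t$ unchanged, so it cannot upgrade $B^\Lambda_t \geq B^0_t$ to the required $|B^\Lambda_t| \geq |B^0_t|$.

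The paper's coupling is considerably simpler than the adaptive rule you gesture at, and the missing ingredient is a parity observation. Generate $P$ (from $(A+\Lambda,A-\Lambda)$) and $P^*$ (from $(A,A)$), both ending at $(n,n)$, under any joint law --- independently is fine. Let $t$ be the first index at which $P^*_t$ equals $P_t$ or its diagonal reflection, equivalently the first $t$ with $|S_t| = |S^*_t|$ where $S_i, S^*_i$ are the respective coordinate differences, and re-couple by setting $P^*_s := P_s$ (or its reflection) for $s \geq t$; the marginals are preserved by the Markov property and the diagonal symmetry of the bridge. The crucial point, implicit in the paper's one-line claim, is that $|S_i| > |S^*_i|$ holds automatically for $2A \leq i < t$ no matter how the two paths are coupled before $t$: $S_i$ and $S^*_i$ always share the same parity, so $|S_i| - |S^*_i|$ is an even integer; each of $|S_i|$ and $|S^*_i|$ changes by exactly $1$ in magnitude per step, so $|S_i| - |S^*_i|$ changes by $0$ or $\pm 2$; starting from $2|\Lambda| \geq 2$ (the case $\Lambda = 0$ being trivial), it therefore cannot become negative before first hitting $0$, which is precisely the definition of $t$. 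Since both paths end at $(n,n)$, such a $t \leq 2n$ exists, so $|S_i| \geq |S^*_i|$ for all $i$, hence $L^{main}(P) \geq L^{main}(P^*)$ pathwise and the theorem follows. This short argument --- free coupling plus a stopping-time paste, justified by parity --- is what you should replace the monotone/sign-flip machinery with.
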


\begin{proof}  
We couple paths $P_{2A}=(A+\Lam,A-\Lam)$,\ldots,$P_{2n}=(n,n)$ with paths $P_{2A}^*= (A,A)$,\ldots,$P_{2n}^*=(n,n)$.
Determine the random paths $P,P^*$ sequentially, starting at $2A$.
Let $t$ be the first value (if any)
for which, setting $P_t=(a,b)$,  either $P_t^*=(a,b)$ or $P_t^*=(b,a)$.
In the first case couple $P_s=P_s^*$ for all $t\leq s \leq 2n$.  In the second case couple $P_s^*$ to
be $P_s$ with coordinates reversed (that is, flip the path on the diagonal) for all $t\leq s\leq 2n$.
For any paired $P,P^*$, $|S_i|> |S_i^*|$ for $2A\leq i < t$ and $|S_i|=|S_i^*|$ for $t\leq i \leq 2n$.
Thus $L^{main}(P) \geq L^{main}(P^*)$ and (\ref{upper1}) follows.
\end{proof}

\begin{corollary}\label{couplingcor}
For any $\Lam$ with $|\Lam|\leq A$ and any $z$
\beq\label{upper2}  \Pr[L\leq z\mid\COND (\Lam,0)] \leq \Pr[L^{main}\leq z\mid\COND (0,0)]  \eeq
\end{corollary}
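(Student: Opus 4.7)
The claim is immediate from two ingredients already in hand, so the plan is simply to sandwich the left-hand side of \eqref{upper2} between a trivial comparison and \refT{firstcoupling}. First, by the definitions \eqref{Linit}--\eqref{modL} we have the pointwise identity $L = L^{init} + L^{main}$, and every summand $S_i^2/i^2$ in $L^{init}$ is nonnegative, so $L \geq L^{main}$ deterministically on every sample path. This pathwise inequality is preserved under conditioning on $\COND(\Lam,0)$, and monotonicity of probability gives
\begin{equation}
\Pr[L \leq z \mid \COND(\Lam,0)] \leq \Pr[L^{main} \leq z \mid \COND(\Lam,0)].
\end{equation}
Second, \refT{firstcoupling} directly supplies
\begin{equation}
\Pr[L^{main} \leq z \mid \COND(\Lam,0)] \leq \Pr[L^{main} \leq z \mid \COND(0,0)],
\end{equation}
and chaining the two bounds yields \eqref{upper2}.

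There is really no obstacle here: all of the work sits in \refT{firstcoupling}, and the role of the corollary is merely to upgrade the comparison from the partial sum $L^{main}$ to the full sum $L$ by discarding the nonnegative $L^{init}$ contribution on the conditioned side. The only minor book-keeping worth noting is that, under the construction of $\COND(\Lam,0)$ described at the start of \refS{sec:lower}, $L^{init}$ is a function of the initial excursion (from $0$ at time $0$ to $2\Lam$ at time $2A-2$) while $L^{main}$ is a function of the main excursion (from $2\Lam$ to $0$), and these two excursions are independent. Consequently passing from $\COND(\Lam,0)$ to $\COND(0,0)$ in the second step affects only the main excursion and leaves the $L^{main}$--coupling argument of \refT{firstcoupling} intact.
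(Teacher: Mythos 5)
Your proposal is correct and matches the paper's own one-line proof: the paper simply notes that $L^{main}\le L$ gives $\Pr[L\le z\mid\COND(\Lam,0)]\le\Pr[L^{main}\le z\mid\COND(\Lam,0)]$ and then applies Theorem~\ref{firstcoupling}. Your added remark about the independence of the two excursions is fine but not needed, since Theorem~\ref{firstcoupling} is invoked as a black box.
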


\begin{proof} $L^{main}\leq L$ so $\Pr[L\leq z] \leq \Pr[L^{main}\leq z]$.
\end{proof}

\begin{corollary}\label{couplingcor1}
For any $z$
\beq\label{upper3} \Pr[L\leq z\mid P_{2n}=(n,n)]  
\leq \Pr[L^{main}\leq z\mid\COND (0,0)]  \eeq
\end{corollary}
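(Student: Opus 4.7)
The plan is to derive the result from Corollary~\ref{couplingcor} by integrating out the intermediate conditioning on $S_{2A-2}$. The event $P_{2n}=(n,n)$ is exactly $S_{2n-2}=0$, and it is coarser than $\COND(\Lam,0)$ because the latter also fixes $S_{2A-2}=2\Lam$. So the natural move is the law of total probability, decomposing according to the value of $P_{2A}$.

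Concretely, I would write
\begin{equation*}
\Pr[L\le z\mid P_{2n}=(n,n)]
=\sum_{\Lam}\Pr\bigsqpar{L\le z\mid \COND(\Lam,0)}\cdot
\Pr\bigsqpar{S_{2A-2}=2\Lam\mid S_{2n-2}=0},
\end{equation*}
where $\Lam$ ranges over the integers of the correct parity with the walk's range, i.e.\ $|\Lam|\le A-1$. Since these constraints force $|\Lam|\le A-1\le A$, the hypothesis of Corollary~\ref{couplingcor} is automatically satisfied for every term appearing in the sum. Applying that corollary gives the uniform bound
\begin{equation*}
\Pr\bigsqpar{L\le z\mid \COND(\Lam,0)}\le
\Pr\bigsqpar{L^{main}\le z\mid \COND(0,0)}
\end{equation*}
for every $\Lam$ in the sum. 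The bound is independent of $\Lam$, so it can be pulled outside, and the remaining weights $\Pr[S_{2A-2}=2\Lam\mid S_{2n-2}=0]$ form a probability distribution that sums to one, yielding \eqref{upper3}.

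There is essentially no obstacle: the entire argument is a one-line total-probability decomposition plus the invocation of the earlier corollary. The only subtlety worth flagging is the automatic verification that the condition $|\Lam|\le A$ is never violated by the conditioning on $S_{2n-2}=0$, so that Corollary~\ref{couplingcor} really does apply to every term. No new coupling, no additional estimates, and no control on $L^{init}$ is required at this stage.
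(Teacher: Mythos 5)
Your argument is correct and is essentially the paper's own proof, just spelled out: the paper phrases it as "$P_{2n}=(n,n)$ is the disjoint disjunction of the events $\COND(\Lam,0)$" and then uses that a uniform conditional bound under each piece of a partition transfers to the disjunction, which is exactly your total-probability decomposition over the values of $S_{2A-2}$. Your remark that the walk's range automatically forces $|\Lam|\le A-1<A$, so that Corollary~\ref{couplingcor} applies to every term, is a small point the paper leaves implicit but is worth noting.
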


\begin{proof}  The event $P_{2n}=(n,n)$ is the disjoint disjunction of the events $\COND (\Lam,0)$.
As, from (\ref{upper2}), $\Pr[L\leq z]$ is uniformly bounded conditional under each of the events $\COND (\Lam,0)$,
it has the same bound conditional on their disjunction.
\end{proof}

\begin{theorem}\label{couplethm2}  
For any $\Gam$ with $|\Gam|\leq n$, 
\beq\label{upper4} \Pr[L\leq z\mid P_{2n}=(n+\Gam,n-\Gam)    ] \leq \Pr[L \leq z\mid P_{2n}=(n,n) ]
\eeq
\end{theorem}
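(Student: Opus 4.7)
My plan is to reduce Theorem~\ref{couplethm2} to Theorem~\ref{firstcoupling} by time-reversing both walks, after first disposing of a sign case. The involution $\xi_i \mapsto -\xi_i$ preserves every $|S_i|$ (and hence $L$) while swapping the endpoint $(n+\Gamma, n-\Gamma) \leftrightarrow (n-\Gamma, n+\Gamma)$; hence I may assume $\Gamma \ge 0$, and the case $\Gamma = 0$ is trivial.

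The key observation is that the conditional law of the walk given its endpoint is the uniform measure on lattice paths with the specified start and end, and this measure is invariant under time-reversal (as is immediate from its combinatorial description as a uniform arrangement of a fixed multiset of $\pm 1$ steps). Setting $\tilde P_k := P_{2n+2-k}$ and $\tilde P^*_k := P^*_{2n+2-k}$, the reversed processes are therefore uniformly random lattice paths (with decreasing unit steps) from $(n+\Gamma, n-\Gamma)$ and $(n,n)$ respectively, both ending at the diagonal point $(1,1)$. This is precisely the ``same end, different start'' configuration handled by Theorem~\ref{firstcoupling}.

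I then apply exactly the coupling of that theorem to the reversed walks: generate $\tilde P$ and $\tilde P^*$ independently for $k = 2, 3, \ldots$, let $t$ be the first time at which $\tilde P_t$ coincides with $\tilde P^*_t$ or its diagonal reflection, and from $t$ onwards couple $\tilde P^*$ to equal $\tilde P$ (possibly after coordinate swap, which is consistent because both walks terminate at the diagonal point $(1,1)$). Repeating the verification in Theorem~\ref{firstcoupling} then yields $|\tilde X_k - \tilde Y_k| \ge |\tilde X^*_k - \tilde Y^*_k|$ for every $k$: the initial gap is $2\Gamma > 0$; the two reversed walks share the common coordinate sum $2n+2-k$ at time $k$; each reversed step changes the diagonal distance by $\pm 1$; and equality of diagonal distances forces coincidence up to swap (because of the equal sums), which by definition of $t$ does not happen before $t$.

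Translating back via $|\tilde X_k - \tilde Y_k| = |S_{2n-k}|$ gives $|S_i| \ge |S^*_i|$ pointwise under the coupling for every $0 \le i \le 2n-2$, whence $L = \sum_i S_i^2/i^2 \ge \sum_i (S_i^*)^2/i^2 = L^*$ deterministically, and the claimed stochastic domination \eqref{upper4} follows. The only genuinely new ingredient beyond Theorem~\ref{firstcoupling} is the invariance of the uniform bridge under time-reversal. I expect the main conceptual point, and the only real obstacle, to be recognizing that this reversal converts the present ``different endpoint, common start'' problem into the already-solved ``different start, common endpoint'' problem; once that is in hand the verification is a verbatim adaptation of the earlier coupling argument.
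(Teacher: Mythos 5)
Your argument is correct and is essentially the paper's own proof: reverse time so that both conditioned walks share the common diagonal endpoint $(1,1)$, then run the reflection coupling of Theorem~\ref{firstcoupling} on the reversed walks to get pointwise domination of $|S_i|$ and hence of $L$. The paper spells out the reversed coupling construction directly rather than citing Theorem~\ref{firstcoupling} by name, but the idea and the verification are the same.
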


\begin{proof}  We reverse time, and consider the random walk starting at $P_{2n}^*=(n+\Gam,n-\Gam)$
and ending at $P_2^*=(1,1)$.  That is, at a state $(a,b)$ one moves to either $(a-1,b)$ or $(a,b-1)$
with the probabilities that the random walk from $(1,1)$ to $(a,b)$ goes through those states.
We couple
walks $P_{2n}^*$,\ldots $P_2^*$ with walks $P_{2n}$,\ldots,$P_2$.  Let $t$ be the first value
(here, highest index value) so that, with $P_t^*=(a,b)$, either $P_t=(a,b)$ or $P_t=(b,a)$.  In
the first case we couple $P_s^*=P_s$ for $2\leq s\leq t$ and in the second case $P_s$ is $P_s^*$
with coordinates reversed for $2\leq s\leq t$.  For any paired paths $P^*,P$, $L(P^*)\geq L(P)$
and so the lower tail inequality (\ref{upper4}) follows.
\end{proof}

\begin{corollary}\label{couplingcor2}  
For any $\Gam$ with $|\Gam|\leq n$, 
\beq\label{upper5} \Pr[L\leq z\mid P_{2n}=(n+\Gam,n-\Gam)    ] 
\leq \Pr[L^{main}\leq z\mid \COND (0,0)]  \eeq
\end{corollary}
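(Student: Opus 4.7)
The plan is to obtain this bound by chaining the two previously established inequalities. Specifically, Theorem~\ref{couplethm2} gives
\begin{equation*}
\Pr[L\leq z\mid P_{2n}=(n+\Gam,n-\Gam)] \leq \Pr[L\leq z\mid P_{2n}=(n,n)],
\end{equation*}
and Corollary~\ref{couplingcor1} gives
\begin{equation*}
\Pr[L\leq z\mid P_{2n}=(n,n)] \leq \Pr[L^{main}\leq z\mid \COND(0,0)].
\end{equation*}
Composing these two inequalities yields \eqref{upper5} directly.

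Conceptually, the first step absorbs the off-diagonal endpoint $(n+\Gam,n-\Gam)$ back to the diagonal endpoint $(n,n)$ via the time-reversed coupling of Theorem~\ref{couplethm2}, at the cost of making $L$ only larger (so its lower tail only shrinks). The second step then absorbs the remaining conditioning from ``end at $(n,n)$'' down to ``start from $(A,A)$ on the diagonal and end at $(n,n)$'', while simultaneously replacing $L$ by the smaller quantity $L^{main}$. Both replacements move the bound in the correct direction for a lower-tail inequality, so the chain is consistent.

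Since both ingredients are already proved, no new coupling or analytic work is needed. The only thing to verify is that the hypothesis $|\Gam|\le n$ is exactly the hypothesis of Theorem~\ref{couplethm2}, which it is, and that Corollary~\ref{couplingcor1} applies with no restriction on $z$. Hence the proof reduces to a one-line transitivity argument, and there is no genuine obstacle.
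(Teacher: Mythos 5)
Your proof is correct and matches the paper's own argument exactly: the paper likewise obtains \eqref{upper5} by composing Theorem~\ref{couplethm2} with Corollary~\ref{couplingcor1}.
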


\begin{proof} Combine Corollary \ref{couplingcor1} and Theorem \ref{couplethm2}.
\end{proof}

\begin{theorem}\label{couplingthm3} Let $\xi_3,\ldots,\xi_{2n}=\pm 1$ independently and uniformly. Let
$S_t$ be the walk with initial value $S_2=0$ and step $S_t=S_{t-1}+\xi_t$.
Set $L = \sum_{i=2}^{2n} S_i^2/i^2$.  Then
\beq\label{upper6} \Pr[L\leq z] 
\leq \Pr[L^{main}\leq z\mid \COND (0,0)]  \eeq
\end{theorem}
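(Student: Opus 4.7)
The plan is to prove \eqref{upper6} by decomposing the unconditional probability according to the terminal value of the walk, and then invoking \refC{couplingcor2} uniformly over that terminal value.

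First I would observe that in the urn picture, a simple random walk $S_t$ with $S_2=0$ and $2n-2$ uniform $\pm 1$ increments $\xi_3,\dots,\xi_{2n}$ corresponds to a uniformly random path $P_2=(1,1), P_3,\dots,P_{2n}$, where $P_t=((t+S_t)/2,(t-S_t)/2)$; in particular, the event $\{S_{2n}=2\Gam\}$ coincides with $\{P_{2n}=(n+\Gam,n-\Gam)\}$, and since $|S_{2n}|\le 2n-2$, only values of $\Gam$ with $|\Gam|\le n-1$ occur. Partitioning on the terminal value gives
\begin{equation}
\Pr[L\le z] \;=\; \sum_{\Gam}\Pr\bigsqpar{L\le z\mid P_{2n}=(n+\Gam,n-\Gam)}\cdot \Pr\bigsqpar{P_{2n}=(n+\Gam,n-\Gam)},
\end{equation}
where the sum is over admissible $\Gam$.

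Next I would apply \refC{couplingcor2}: for every such $\Gam$ (with $|\Gam|\le n$, which is satisfied),
\begin{equation}
\Pr\bigsqpar{L\le z\mid P_{2n}=(n+\Gam,n-\Gam)} \;\le\; \Pr\bigsqpar{L^{\mathit{main}}\le z\mid \COND(0,0)}.
\end{equation}
The right-hand side does not depend on $\Gam$, so it factors out of the sum.

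Finally, since the events $\{P_{2n}=(n+\Gam,n-\Gam)\}$ partition the sample space, $\sum_{\Gam}\Pr[P_{2n}=(n+\Gam,n-\Gam)]=1$, and the desired bound \eqref{upper6} follows. There is no real obstacle here; the entire content of the argument lies in \refC{couplingcor2}, so this theorem is essentially a one-line consequence of the previous coupling results. The only subtlety worth double-checking is that the indexing and parity conventions match between the urn description (paths ending at $P_{2n}$) and the random walk description (walk ending at $S_{2n}$), which the identification $\Gam=S_{2n}/2$ handles directly.
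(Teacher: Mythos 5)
Your proposal is correct and matches the paper's argument exactly: both decompose the unconditional probability over the terminal value $P_{2n}=(n+\Gam,n-\Gam)$, apply Corollary~\ref{couplingcor2} uniformly to each conditional probability, and sum. The only difference is that you spell out the partition sum explicitly, whereas the paper phrases it as a disjoint disjunction; the substance is identical.
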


\begin{proof}  The unrestricted walk is the disjoint disjunction of the excursions
ending at $P_{2n}=(n+\Gam,n-\Gam)$.  Corollary \ref{couplingcor2} gives the upper
bound under any of these conditions, so the upper bound holds under their disjunction.
\end{proof}

We set $z=c\ln n$.  Theorems \ref{couplingthm3} and \ref{t4}  
yield the upper bound to Theorem \ref{t1} and hence,
together with Theorem \ref{t1lower},
prove Theorem \ref{t1}.

\section{Brownian Approximations}\label{sec:brownianapprox}

In this Section, we introduce a Brownian Analogue for $L$, and establish that for the purposes of establishing Theorem \ref{t1}, it is enough to establish the corresponding statement for the Brownian analogue. To this end, let 
$\{B_t : t \geq 0\}$ be a standard Brownian motion (with $B_0=0$). Thus $B_n$ is a natural approximation of $S_n$.

Recall $L=L_n$ from \eqref{1a} and define the two natural approximations
%
\begin{align}
\tL&=\tL_n = \sumin \frac{B_i^2}{i^2}, \label{tL}
\\
\hL&=\hL_n = \int_1^n \frac{B_t^2}{t^2}\dd t.\label{hL}
\end{align}

\noindent
We introduce a cutoff $A$; $A:=\floor{\ln^{10}n}$ 
as in \eqref{defA} works in this case as well, except that we assume that
$A$ is an even integer (this is convenient and simplifies the argument in
\refL{L3} below, but is not essential).  
Define 
\begin{align}
  L' &= L'_{n} = \sum_{i=A+1}^n \frac{S_i^2}{i^2}
\\
  \tL' &= \tL'_{n} = \sum_{i=A+1}^n \frac{B_i^2}{i^2}
\\
  \hL' &= \hL'_{n} = \int_A^n \frac{B_t^2}{t^2}\dd t
\intertext{and}
  L'' &= L''_{n} = \sum_{i=A+1}^n \frac{(S_i-S_A)^2}{i^2} \label{L''}
\\
  \tL'' &= \tL''_{n} = \sum_{i=A+1}^n \frac{(B_i-B_A)^2}{i^2}\label{tL''}
\\
  \hL'' &= \hL''_{n} =  \int_A^n \frac{(B_t-B_A)^2}{t^2}\dd t \label{hL''}
\end{align}

Note that
\begin{equation}\label{EL}
  \E L_n = \E L_n' = \sumin \frac{1}{i} = \ln n+O(1)
\end{equation}
and
\begin{equation}\label{EL''}
  \E L''_n = \int_1^n \frac{1}{t}\dd t = \ln n.
\end{equation}

\noindent
Throughout this discussion, 
$C$ denotes some unspecified finite constants, changing from one occurrence
to the next.
(In contrast to $c$, which is our main parameter.) We implicitly assume that $n$ is large. At least, assume $n\ge 8$ throughout, so
$\ln\ln n\ge1$.


Lemmas \ref{L1}--\ref{L5} establish that the random variable $L_n$, and those
defined in \eqref{tL}--\eqref{hL''} are equivalent for our purposes.

\begin{lemma}\label{L1}
For any $c>0$ and $\eps>0$, for $n$ large enough,
\begin{align}
    \P\bigpar{L_n\le c\ln n}&\ge \frac12\P\bigpar{L_n''\le (c-\eps)\ln n},
\label{l1}  \\
    \P\bigpar{\tL_n\le c\ln n}&\ge \frac12\P\bigpar{\tL_n''\le (c-\eps)\ln n},
\label{l1t}\\
    \P\bigpar{\hL_n\le c\ln n}&\ge \frac12\P\bigpar{\hL_n''\le (c-\eps)\ln n}.
\label{l1h}
\end{align}
  \end{lemma}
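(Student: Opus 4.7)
All three inequalities have the same structure, so I will focus on \eqref{l1} and remark that the argument transfers verbatim to the Brownian versions. The idea is to split the walk at time $A$, write $S_i = S_A + (S_i - S_A)$ for $i \ge A$, and observe that $L''_n$ depends only on the increments $W_i := S_i - S_A$ ($i \ge A$), which are independent of $(S_1,\dots,S_A)$. The obstruction to a clean inequality is the cross term $2 S_A W_i$ appearing when one expands $(S_A+W_i)^2$; I will absorb it using the elementary bound $(a+b)^2 \le (1+\delta)a^2 + (1+\delta^{-1})b^2$ for any $\delta>0$.

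\paragraph{Step 1: The deterministic bound.} Applying the elementary inequality above with $\delta := \eps/(2c)$ yields
\begin{equation*}
L_n \;=\; \sum_{i=1}^A \frac{S_i^2}{i^2} \;+\; \sum_{i=A+1}^n \frac{(S_A+W_i)^2}{i^2}
\;\le\; \underbrace{\sum_{i=1}^A \frac{S_i^2}{i^2} + (1+\delta^{-1}) S_A^2 \sum_{i=A+1}^n \frac{1}{i^2}}_{=:R_n} \;+\; (1+\delta)\, L''_n .
\end{equation*}
By the choice of $\delta$, if $L''_n \le (c-\eps)\ln n$ then $(1+\delta)L''_n \le (c-\eps/2)\ln n$, so on the event $\{R_n \le (\eps/2)\ln n\}\cap\{L''_n\le(c-\eps)\ln n\}$ we have $L_n \le c\ln n$.

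\paragraph{Step 2: $R_n$ is small in expectation, and is independent of $L''_n$.} Using $\E S_i^2 = i$ and $\sum_{i=A+1}^n i^{-2} \le A^{-1}$,
\begin{equation*}
\E R_n \;\le\; \sum_{i=1}^A \frac{1}{i} \;+\; (1+\delta^{-1}) \cdot A \cdot A^{-1} \;=\; O(\ln\ln n).
\end{equation*}
Markov's inequality gives $\Pr[R_n > (\eps/2)\ln n] \le O(\ln\ln n/\ln n) < 1/2$ for all sufficiently large $n$. Since $R_n$ is a function of $(S_1,\dots,S_A)$ while $L''_n$ is a function of $(W_{A+1},\dots,W_n)$, the two are independent, so
\begin{equation*}
\Pr[L_n\le c\ln n] \;\ge\; \Pr[R_n\le (\eps/2)\ln n]\cdot\Pr[L''_n\le(c-\eps)\ln n] \;\ge\; \tfrac12\Pr[L''_n\le(c-\eps)\ln n],
\end{equation*}
proving \eqref{l1}.

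\paragraph{Step 3: Brownian versions.} For \eqref{l1t} and \eqref{l1h} the argument is identical after substituting $B$ for $S$: write $B_i = B_A + (B_i - B_A)$ (resp.\ $B_t = B_A + (B_t - B_A)$ for $t\ge A$), apply the same quadratic inequality, and use that the process $\{B_t - B_A : t \ge A\}$ is a Brownian motion independent of $\{B_t : 0 \le t \le A\}$. The analogues of $R_n$ (say $\widetilde R_n$ and $\widehat R_n$) have expectation $O(\ln\ln n)$ by exactly the same calculation, since $\E B_t^2 = t$ and $\sum_{i=1}^A i^{-1}$ is replaced in the integral case by $\int_1^A t^{-1}\,dt = \ln A$. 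The only mild point worth checking is that $A$ being an even integer plays no role here; it is used later (in \refL{L3}). I expect no real obstacle: the only technical care is in choosing $\delta$ small enough that the $(1+\delta)L''_n$ term leaves room $\eps/2 \cdot \ln n$ for the error, and Markov's inequality then finishes the job.
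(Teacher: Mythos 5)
Your proof is correct, and it follows the same overall strategy as the paper (split at $A$, show the initial/cross contribution is $O_{\mathrm{P}}(\ln\ln n)$, use the independence of the past-$A$ increments from the history up to $A$, and finish with Markov), but the elementary inequality used for the split differs. The paper applies Minkowski's inequality in $\ell^2$ to obtain the purely additive bound
\[
\sqrt{L_n}\le\sqrt{L_n''}+\frac{|S_A|}{\sqrt A}+\sqrt{L_A},
\]
and then only needs the remainder $|S_A|/\sqrt A+\sqrt{L_A}$ to be $O(\sqrt{\ln\ln n})$ with probability $\ge\tfrac12$, which absorbs into $\eps$ after squaring. You instead use the weighted quadratic inequality $(a+b)^2\le(1+\delta)b^2+(1+\delta^{-1})a^2$, picking $\delta=\eps/(2c)$ so that the multiplicative factor $(1+\delta)$ on $L_n''$ leaves budget for the additive remainder $R_n$. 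Both give the same conclusion with the same $O(\ln\ln n)$ bound on the error; the paper's version is marginally slicker because it carries no free parameter and the additive $\sqrt{\ln\ln n}$ slack vanishes automatically when compared against $\sqrt{\ln n}$, whereas your version has to tune $\delta$ against $c$ and $\eps$. Your remark that the parity assumption on $A$ is irrelevant here is accurate (it is used only in Lemma~\ref{L3}).

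Two minor points worth flagging. First, your underbraced $R_n$ as written has the coefficient $(1+\delta^{-1})$ on $S_A^2$; make sure the roles of $a$ and $b$ in the inequality are kept consistent with $a=S_A$, $b=W_i$, so that the large factor $(1+\delta^{-1})$ multiplies the small quantity $S_A^2\sum_{i>A}i^{-2}=O(1)$ rather than $L_n''$. You have this right. Second, in the degenerate case $\eps\ge c$ the right-hand side of \eqref{l1} is $0$ (since $L_n''\ge0$), so the claim is trivial and no care about $\delta$ is needed; it would be worth a sentence, though the paper silently handles this the same way.
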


\begin{proof}
The proof of all three parts are identical, up to obvious (notational)
changes.
Hence we consider only \eqref{l1}.

  By Minkowski's inequality (the triangle inequality in $\ell^2$),
\begin{equation}
\sqrt{L_n'} \le \sqrt{L_n''} + \biggpar{\sum_{i=A+1}^n \frac{S_A^2}{i^2}}\qq    
\le \sqrt{L_n''} + \frac{|S_A|}{\sqrt{A}}
  \end{equation}
and thus
\begin{equation}\label{xa}
\sqrt{L_n}= \sqrt{L_n'+L_A}
\le \sqrt{L_n'} + \sqrt{L_A}
\le \sqrt{L_n''} + \frac{|S_A|}{\sqrt{A}}  + \sqrt{L_A}.
  \end{equation}
Furthermore, $\E S_A^2=A$ and
$\E L_A = O\bigpar{\ln A}=O\bigpar{\ln\ln n}$ by \eqref{EL}; hence,
by Chebyshev's and Markov's inequalities, for a suitable $C$,
\begin{align}
\P\Bigpar{\frac{|S_A|}{\sqrt{A}}> C} &\le \frac{1}4,
\\
\P\Bigpar{{L_A}> C \ln\ln n} &\le \frac{1}4,
\end{align}
and thus
\begin{equation}\label{xb}
  \begin{split}
\P\Bigpar{\frac{|S_A|}{\sqrt{A}}+\sqrt{L_A}> C\sqrt{\ln\ln n}} 
\le \frac{1}2.
  \end{split}
\end{equation}
Since $L_n''$ is independent of $S_A$ and $L_A$, it follows from \eqref{xa}
and \eqref{xb} that
\begin{align*}
    \P\bigpar{L_n\le c\ln n}
&\ge \P\bigpar{L_n''\le (c-\eps)\ln n}
\P\Bigpar{\frac{|S_A|}{\sqrt{A}}+\sqrt{L_A}\le C\sqrt{\ln\ln n}} 
\\&
\ge \frac12\P\bigpar{L_n''\le (c-\eps)\ln n}.
\qedhere
\end{align*}
\end{proof}

Obviously, $L_n\ge L_n'$, $\tL_n\ge\tL_n'$ and $\hL_n\ge\hL_n'$.
The next lemma says that $L_n'$ is stochastically larger than $L_n''$, and so on.
\begin{lemma}
  \label{L3}
For any $y\ge0$,
\begin{align}
\P\bigpar{L_n \le y} &\le  \P\bigpar{L_n'\le y} \le \P\bigpar{L_n''\le y},\label{l3}
\\ 
\P\bigpar{\tL_n \le y} &\le  \P\bigpar{\tL_n'\le y} \le \P\bigpar{\tL_n''\le y},
\label{l3t}\\ 
\P\bigpar{\hL_n \le y} &\le  \P\bigpar{\hL_n'\le y} \le \P\bigpar{\hL_n''\le y}.
\label{l3h}
\end{align}
\end{lemma}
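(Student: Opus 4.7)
The first inequality in each of \eqref{l3}, \eqref{l3t}, \eqref{l3h} is immediate from the decompositions $L_n = L_n' + \sum_{i=1}^A S_i^2/i^2$, $\tL_n = \tL_n' + \sum_{i=1}^A B_i^2/i^2$, and $\hL_n = \hL_n' + \int_1^A B_t^2/t^2\dd t$, since the added pieces are nonnegative. The nontrivial content of the lemma is the stochastic domination of $L_n''$ by $L_n'$ (and its analogues for $\tL,\hL$), and my plan is to prove this via a reflection coupling. Write $W_i := S_i - S_A$ for $i \ge A$, so that $(W_i)_{i \ge A}$ is a random walk starting at $0$ at time $A$, independent of $S_A$, and $L_n' = \sum_{i>A}(S_A + W_i)^2/i^2$. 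The joint symmetry $(W_i)\eqd(-W_i)$ together with the independence of $W$ and $S_A$ shows that the conditional distribution of $L_n'$ given $S_A$ depends only on $|S_A|$, so it suffices to construct, for each value $s=|S_A|$, a coupling of a walk $(W_i)$ from $0$ with a walk $(W_i')$ from $s$ such that $|W_i'| \ge |W_i|$ for all $A \le i \le n$ pointwise.

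The coupling is the classical reflection construction. Since $A$ is even, $S_A$ is an even integer, so $s/2$ is an integer that the walk $W$ can hit; let $T := \inf\{i \ge A : W_i = s/2\}$ and set $W_i' := s - W_i$ for $A \le i \le T\wedge n$ and $W_i' := W_i$ for $T < i \le n$. By the strong Markov property at $T$ together with the symmetry of the Rademacher step, $(W_i')$ is a random walk started at $s$. For $i < T$, the walk $W$ has not yet reached $s/2$, so $W_i \le s/2-1$; a short case split on the sign of $W_i$ then gives $|W_i'| = s - W_i \ge |W_i|$ (if $W_i \ge 0$ use $W_i \le s/2 - 1 < s - W_i$; if $W_i < 0$ use $|W_i'| = s + (-W_i) > -W_i = |W_i|$), while for $i \ge T$ the equality $|W_i'| = |W_i|$ is trivial. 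Squaring, summing against $1/i^2$, and then averaging over $|S_A|$ yields $\P[L_n' \le y] \le \P[L_n'' \le y]$.

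The arguments for \eqref{l3t} and \eqref{l3h} are parallel: I replace the random walk by the Brownian motion $(B_t - B_A)_{t \ge A}$ started at $0$, take threshold $|B_A|/2$ (an arbitrary nonnegative real, so no parity issue arises), and reflect at its first hitting time $T$ (on $\{T > n\}$ I simply use $W_t' := |B_A| - W_t$ throughout $[A,n]$, where $W_t$ then stays strictly below $|B_A|/2$ by continuity). The reflected process has the law of a Brownian motion from $|B_A|$, and the same sign-split argument gives $|W_t'| \ge |W_t|$ pointwise, so that $\int_A^n (W_t')^2/t^2\dd t \ge \int_A^n W_t^2/t^2\dd t$ and $\sum_{i=A+1}^n (W_i')^2/i^2 \ge \sum_{i=A+1}^n W_i^2/i^2$. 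I expect the only technical point to be the verification that the reflection yields a walk (respectively Brownian motion) with the correct starting law, together with the pointwise bound before $T$; both are short standard checks that rely on the parity choice of $A$ in the discrete case and on continuity in the Brownian case.
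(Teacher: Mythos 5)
Your proposal is correct and takes essentially the same route as the paper: same reflection coupling at the hitting time of $|S_A|/2$ (resp.\ $|B_A|/2$), same use of the parity of $A$ in the discrete case, and the same sign split to obtain $|W_i'|\ge|W_i|$ pointwise. The only cosmetic difference is that you state the reduction to $s=|S_A|\ge 0$ explicitly as a preliminary symmetry step, whereas the paper invokes the same symmetry inside the pointwise bound; both handle the Brownian case identically.
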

\begin{proof}
Consider first \eqref{l3}.
  Define $\bS_i:=S_i-S_A$ for $i\ge A$.
Then $\bS_i$, $i\ge A$, is a simple random walk starting at $\bS_A=0$.

If we condition $(S_i)_{i\ge A}$ on $S_A=x$, we obtain a simple random walk
starting at $x$. This has the same distribution as $x+\bS_i$, but we shall
use a different coupling defined as follows.
Recall that $A$ is chosen to be even, and thus $S_A$ is an even integer.

For a given even integer $x$, define
the stopping time $\tau:=\inf\set{k\ge A:\bS_k=x/2}$, and 
\begin{equation}
  \bSx_i:=
  \begin{cases}
    x-\bS_i,& A\le i\le \tau,
\\
\bS_i, & i>\tau.
  \end{cases}
\end{equation}
Then $\bSx_i$ is a simple random walk, started at $\bSx_A=x$, and thus
$(\bSx_i)_A^\infty$ 
has the same distribution as
$(x+\bS_i)_A^\infty$. 
Furthermore, it is easily seen that, for all $i\ge A$, 
\begin{equation}
  |\bSx_i|\ge |\bS_i|.
\end{equation}
(To see this, we may  by symmetry assume $x\ge0$.
It suffices to consider $A\le i\le \tau$, and then $\bS_i\le x/2$, and thus
either 
$\bS_i\le 0$ and $\bSx_i=x+|\bS_i|$, or $0<\bS_i\le x/2 \le \bSx_i$.)

Consequently, for every even integer $x$ and every $y\ge0$,
\begin{equation}
  \begin{split}
\P\lrpar{L''_n \le y}
&=
\P\lrpar{\sum_{i=A+1}^n \frac{\bS_i^2}{i^2}  \le y}  
\ge
\P\lrpar{\sum_{i=A+1}^n \frac{\bigpar{\bSx_i}^2}{i^2}  \le y}  
\\&
=
\P\lrpar{\sum_{i=A+1}^n \frac{S_i^2}{i^2}  \le y \,\Big|\, S_A=x}  
=
\P\bigpar{L'_n \le y \mid S_A=x}.  
  \end{split}
\end{equation}
Thus, $\P\bigpar{L''_n \le y}\ge \P\bigpar{L'_n \le y\mid S_A}$, and thus
we obtain \eqref{l3} by taking the expectation.

The proofs of \eqref{l3t} and \eqref{l3h} are the same, with $S_n$ replaced
by $B_t$.
\end{proof}

\begin{lemma}\label{L4}
  For every $\eps>0$, $c>0$ and $a<\infty$,
  \begin{align}
\P(L_n''\le c\ln n) &\le \P\bigpar{\tL_n''\le (c+\eps)\ln n} + O\bigpar{n^{-a}},
\label{l4a}\\
\P(\tL_n''\le c\ln n) &\le \P\bigpar{L_n''\le (c+\eps)\ln n} + O\bigpar{n^{-a}},
\label{l4b}\\
\P(\tL_n''\le c\ln n) &\le \P\bigpar{\hL_n''\le (c+\eps)\ln n} + O\bigpar{n^{-a}},
\label{l4c}\\
\P(\hL_n''\le c\ln n) &\le \P\bigpar{\tL_n''\le (c+\eps)\ln n} + O\bigpar{n^{-a}},
\label{l4d}
  \end{align}
\end{lemma}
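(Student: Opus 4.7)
All four inequalities have the same shape: an event $\{X\le c\ln n\}$ is contained, up to an $O(n^{-a})$ correction, in $\{Y\le(c+\eps)\ln n\}$. I would prove them by constructing a single ``good'' event $\cE$ with $\P(\cE^c)=O(n^{-a})$ on which the three quantities $L_n''$, $\tL_n''$, $\hL_n''$ are so close that such implications hold for $n$ large. The mechanism is the elementary observation that if $\bigabs{\sqrt X-\sqrt Y}=o(1)$ and $X\le c\ln n$, then
\begin{equation*}
Y=X+2\sqrt{X}\bigpar{\sqrt Y-\sqrt X}+\bigpar{\sqrt Y-\sqrt X}^2
\le c\ln n+o\bigpar{\sqrt{\ln n}}\le(c+\eps)\ln n
\end{equation*}
eventually in $n$.

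The event $\cE$ is built from three classical ingredients. First, the Koml\'os--Major--Tusn\'ady coupling of $(S_i)$ with a Brownian motion $(B_t)$ gives $\max_{i\le n}|S_i-B_i|\le C\log n$ off an event of probability $\le n^{-a}$, provided $C=C(a)$ is large. Second, with $M_i:=\sup_{t\in[i-1,i]}|B_t-B_i|$, Doob's inequality together with Gaussian tails and a union bound yield $M_i\le C\sqrt{\log n}$ simultaneously for all $A<i\le n$, off an event of probability $n^{-a}$. Third, a similar union bound gives $|B_i-B_A|\le C\sqrt{i\log n}$ for all $A\le i\le n$ off an event of probability $n^{-a}$.

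The two key comparisons are then applications of Minkowski's inequality. On $\cE$, Minkowski in $\ell^2$ yields
\begin{equation*}
\bigabs{\sqrt{L_n''}-\sqrt{\tL_n''}}
\le\Bigpar{\sum_{i=A+1}^n\frac{\bigpar{(S_i-S_A)-(B_i-B_A)}^2}{i^2}}^{1/2}
\le\frac{2C\log n}{\sqrt A}=o(1),
\end{equation*}
using $A=\floor{\ln^{10}n}$. For the continuous comparison, let $f(t):=(B_t-B_A)/t$ and let $g$ be the step function $g(t):=(B_i-B_A)/i$ for $t\in(i-1,i]$, so that $\hL_n''=\|f\|_{L^2([A,n])}^2$ and $\tL_n''=\|g\|_{L^2([A,n])}^2$. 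A short calculation shows that, for $t\in(i-1,i]$,
\begin{equation*}
\abs{f(t)-g(t)}\le \frac{M_i}{t}+\frac{|B_i-B_A|}{i\,t};
\end{equation*}
integrating over $[A,n]$ and invoking the oscillation estimates on $\cE$ gives $\|f-g\|_{L^2}^2=O\bigpar{(\log n)/A}=o(1)$, so by Minkowski in $L^2$, $\bigabs{\sqrt{\tL_n''}-\sqrt{\hL_n''}}=o(1)$. Applying the elementary conversion of the first paragraph in each direction then yields \eqref{l4a}--\eqref{l4d}.

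The main obstacle, and the reason for the somewhat baroque choice $A=\floor{\ln^{10}n}$, is that the three error contributions (coupling, Brownian oscillation, and the growth of $|B_i-B_A|$) must all be made unconditionally $o(1)$, not merely small compared to $\sqrt{\ln n}$, off an event of probability $O(n^{-a})$. A polynomial-in-$\log n$ cutoff is precisely what simultaneously accommodates all three, and the $\sqrt{\cdot}$-formulation is essential to translating such $o(1)$ errors into the $o(\ln n)$ slack that the statement demands.
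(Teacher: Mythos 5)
Your proposal is correct, and its overall architecture coincides with the paper's: KMT coupling plus Minkowski in $\ell^2/L^2$ for the discrete-to-Brownian comparison, a log-polynomial cutoff $A=\floor{\ln^{10}n}$ to make the resulting $|\sqrt{X}-\sqrt{Y}|$ errors $o(1)$, and the elementary observation that an $o(1)$ gap in square roots gives the needed $o(\ln n)$ slack. Where you diverge from the paper is in the sum-versus-integral step \eqref{l4c}--\eqref{l4d}. The paper inserts the intermediate variable $\chL_n''=\int_A^n (B_{\ceil t}-B_A)^2/t^2\dd t=\sum_{i>A}(B_i-B_A)^2/(i(i-1))$, which is sandwiched \emph{deterministically} between $\tL_n''$ and $(1+o(1))\tL_n''$ and then compared to $\hL_n''$ by Minkowski using only the Brownian oscillation bound $\sup_{t\le n}|B_{\ceil t}-B_t|\le\ln n$. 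You instead compare $\hL_n''$ and $\tL_n''$ directly, viewing them as $\|f\|_{L^2}^2$ and $\|g\|_{L^2}^2$ for $f(t)=(B_t-B_A)/t$ and the step function $g$, and bound $\|f-g\|_{L^2}$ by splitting $f-g$ into an oscillation piece and a $(B_i-B_A)(i-t)/(ti)$ piece. This works, but it forces you to add a third good event controlling $|B_i-B_A|=O(\sqrt{i\log n})$ uniformly, which the paper's $\chL_n''$ trick renders unnecessary (the deterministic sandwich absorbs the $1/t-1/i$ discrepancy in the kernel). Your computation of $\|f-g\|_{L^2}^2=O(\log n/A)$ on the good event checks out, so the variant is sound; it is just slightly less economical in its probabilistic inputs.
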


\begin{proof}
By \cite{KMT}, 
  there exists a coupling (the 'dyadic coupling') of the simple random walk
$(S_i)_{i\ge0}$
and the Brownian motion $(B_t)_{t\ge0}$ 
such that with probability $1-O(n^{-a})$,
for some constant $C_a$,
\begin{equation}\label{kmt}
  \max_{i\le n}|S_i-B_i|\le C_a\ln n,
\end{equation}
see also \cite[Chapter 7]{LL}.
If \eqref{kmt} holds, then 
$|(S_i-S_A)-(B_i-B_A)|\le 2C_a\ln n$
for $A\le i\le n$,
and thus, by Minkowski's inequality,
\begin{equation}\label{sw}
  \begin{split}
|  \sqrt{L_n''}-\sqrt{\tL_n''}|
&\le 
\lrpar{\sum_{i=A+1}^n \frac{\bigpar{(S_i-S_A)-(B_i-B_A)}^2}{i^2}}\qq
\\&
\le 2C_a\frac{\ln n}{\sqrt A}
=o(1).
  \end{split}
\end{equation}
Hence, \eqref{l4a} and \eqref{l4b} follow.

In order to prove \eqref{l4c}--\eqref{l4d}, we introduce yet another version
of $L_n$:
\begin{equation}
\label{chL''}
\chL''=\chL''_{n}
 :=  \int_A^n \frac{(B_{\ceil{t}}-B_A)^2}{t^2}\dd t 
=\sum_{i=A+1}^n \frac{(B_{i}-B_A)^2}{i(i-1)}.
\end{equation}
Then, see \eqref{tL''},
\begin{equation}\label{nw}
  \tL_n''\le\chL_n''\le \frac{A+1}{A}\tL_n''
=\bigpar{1+o(1)}\tL_n''.
\end{equation}
Moreover, by simple standard properties of Brownian motion,
\begin{align}
&\P\bigpar{\sup_{t\le n} |B_{\ceil t}-B_t|>\ln n}    
\le n\P\bigpar{\sup_{0\le t\le 1} |B_{1}-B_t|>\ln n}  
\notag
\\&\qquad
\le n\P\bigpar{\sup_{0\le t\le 1} |B_t|>\tfrac12\ln n}    
\le 4n\P\bigpar{B_1>\tfrac12\ln n}   
\le C n e^{-\ln^2n/8}
\notag
\\&\qquad
=O\bigpar{n^{-a}}. 
\end{align}
Hence, similarly to \eqref{sw},
with probability $1-O\bigpar{n^{-a}}$,
\begin{equation}\label{sw2}
  \begin{split}
|  \sqrt{\chL_n''}-\sqrt{\hL_n''}|
&\le 
\frac{\ln n}{\sqrt A}
=o(1).
  \end{split}
\end{equation}
We obtain \eqref{l4c} and \eqref{l4d} from \eqref{nw} and \eqref{sw2}.
\end{proof}

\begin{lemma}\label{L5}
  For every $\eps>0$, $c>0$ and $a<\infty$,
  \begin{align}
\P(L_n\le c\ln n) &\le2 \P\bigpar{\hL_n\le (c+\eps)\ln n} + O\bigpar{n^{-a}},
\label{l5a}\\
\P(\hL_n\le c\ln n) &\le2 \P\bigpar{L_n\le (c+\eps)\ln n} + O\bigpar{n^{-a}}.
\label{l5b}
  \end{align}
\end{lemma}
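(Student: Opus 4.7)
The plan is to deduce Lemma \ref{L5} by chaining together Lemmas \ref{L1}, \ref{L3}, and \ref{L4}, which already provide the three key comparisons: between the full sum and the primed/double-primed versions (Lemma \ref{L1} in one direction, Lemma \ref{L3} in the other), and between the discrete random-walk sum, the Brownian sum, and the Brownian integral at the double-primed level (Lemma \ref{L4}, via the KMT coupling). The point is simply that these comparisons let us walk from $L_n$ to $L_n''$ to $\tL_n''$ to $\hL_n''$ to $\hL_n$, with only a small cost in the threshold at each step.

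Concretely, for \eqref{l5a}, split the target error $\eps$ into three equal pieces. First, since $L_n\ge L_n'$, we have $\P(L_n\le c\ln n)\le\P(L_n'\le c\ln n)$, and then Lemma \ref{L3} gives $\P(L_n'\le c\ln n)\le\P(L_n''\le c\ln n)$. Next, apply Lemma \ref{L4} twice: inequality \eqref{l4a} yields
\[
\P(L_n''\le c\ln n)\le\P\bigpar{\tL_n''\le (c+\tfrac{\eps}{3})\ln n}+O(n^{-a}),
\]
and inequality \eqref{l4c} yields
\[
\P\bigpar{\tL_n''\le (c+\tfrac{\eps}{3})\ln n}\le\P\bigpar{\hL_n''\le (c+\tfrac{2\eps}{3})\ln n}+O(n^{-a}).
\]
Finally, Lemma \ref{L1} \eqref{l1h} applied with parameters $c':=c+\eps$ and $\eps':=\eps/3$ gives
\[
\P\bigpar{\hL_n''\le (c+\tfrac{2\eps}{3})\ln n}\le 2\P\bigpar{\hL_n\le (c+\eps)\ln n}.
\]
Combining these, \eqref{l5a} follows.

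The bound \eqref{l5b} is proved by the same chain run in the opposite direction: use $\hL_n\ge\hL_n'$ and Lemma \ref{L3} to pass from $\hL_n$ to $\hL_n''$, then \eqref{l4d} from Lemma \ref{L4} to pass to $\tL_n''$, then \eqref{l4b} to pass to $L_n''$, and finally \eqref{l1} from Lemma \ref{L1} to pass from $L_n''$ back up to $L_n$, absorbing factors of $2$ and terms $O(n^{-a})$ along the way.

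There is no real obstacle here beyond bookkeeping: each step has already been carried out in the earlier lemmas, and the only care needed is to make sure that (i) the slack $\eps$ is split into enough small pieces so that the KMT-based errors in Lemma \ref{L4} and the boundary-term error in Lemma \ref{L1} all fit into the final $\eps$-budget, and (ii) the asymmetry between Lemma \ref{L1} (which introduces the factor $1/2$, and hence $2$ after inversion) and Lemmas \ref{L3}, \ref{L4} (which are factor-free) is handled correctly, so that the final constant is exactly $2$ rather than accumulating a larger power of $2$ at each step.
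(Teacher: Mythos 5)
Your proposal is correct and follows exactly the same chain as the paper's proof: from $L_n$ to $L_n''$ via \eqref{l3}, to $\tL_n''$ via \eqref{l4a}, to $\hL_n''$ via \eqref{l4c}, and finally to $\hL_n$ via \eqref{l1h}, with the reverse chain \eqref{l3h}, \eqref{l4d}, \eqref{l4b}, \eqref{l1} for the second inequality. The only cosmetic difference is that you split $\eps$ into thirds up front, while the paper uses $\eps$ at each step and substitutes $\eps\mapsto\eps/3$ at the end.
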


\begin{proof}
  By  \eqref{l3}, \eqref{l4a}, \eqref{l4c} and \eqref{l1h},
  \begin{align}
\P(L_n\le c\ln n) 
&\le
\P(L_n''\le c\ln n) 
\le \P\bigpar{\tL_n''\le (c+\eps)\ln n} + O\bigpar{n^{-a}}
\nonumber\\&    
\le \P\bigpar{\hL_n''\le (c+2\eps)\ln n} + O\bigpar{n^{-a}}
\nonumber\\&    
\le 2\P\bigpar{\hL_n\le (c+3\eps)\ln n} + O\bigpar{n^{-a}},  
  \end{align}
which yields \eqref{l5a} after replacing $\eps$ by $\eps/3$.

Similarly, \eqref{l3h}, \eqref{l4d}, \eqref{l4b} and \eqref{l1} yield
  \begin{align}
\P(\hL_n\le c\ln n) 
&\le
\P(\hL_n''\le c\ln n) 
\le \P\bigpar{\tL_n''\le (c+\eps)\ln n} + O\bigpar{n^{-a}}
\nonumber\\&   
\le \P\bigpar{L_n''\le (c+2\eps)\ln n} + O\bigpar{n^{-a}}
\nonumber\\&
\le 2\P\bigpar{L_n\le (c+3\eps)\ln n} + O\bigpar{n^{-a}}.
  \end{align}
\end{proof}

Consequently, it does not matter whether we use $L_n$ or $\hL_n$ (or $\tL_n$) in
Theorem \ref{t1}; the different versions are equivalent.

\section{Analysis of the Brownian versions}
\label{sec:brownian_analysis}

Note from \eqref{tL}--\eqref{hL} that both $\tL_n$ and $\hL_n$ are quadratic functionals of Gaussian variables.
There is a general theory for such studying large deviation for such variables.
This facilitates a direct analysis of the moment generating function of
\eqref{hL}.

\subsection{Moment generating function of $\hL$}
We utilize the general theory of Gaussian Hilbert Spaces to compute the moment generating function of $\hL_n$. For the convenience of the reader, we include a brief summary, relevant for this application, in \refApp{App}, and refer the interested reader to \cite[Chapters VII and VI]{SJIII} for further details. 

By \refT{T+} and \refL{LhL}, for every $t<(2\max\gl_j)\qw$, 
\begin{equation}\label{mgfhL}
  \E e^{t\hL_n}=\prod_j \bigpar{1-2\gl_j t}\qqw,
\end{equation}
where $(\gl_j)$ are the non-zero eigenvalues of the integral operator
\begin{equation}\label{T}
  \begin{split}
  Tf(x)&:=\inton \Bigpar{\frac{1}{1\lor x\lor y}-\frac{1}{n}} f(y)\dd y,
  \end{split}
\end{equation}
acting in $L^2\ono$.
As shown in \refApp{App}, 
see Remark \ref{Rcompact},
$T=T_n$ is a positive compact operator, and
thus $\gl_j>0$; furthermore, $\sum_j\gl_j=\E\hL_n=\ln n<\infty$.

Suppose that $f$ is an eigenfunction with a non-zero eigenvalue $\gl$. Thus
$f\in L^2\ono$ is not identically 0, and $Tf=\gl f$.
It follows from \eqref{T} by dominated convergence that
$Tf(x)$ is continuous in $x\in\on$; 
thus $f=\gl\qw Tf$ is continuous on $\on$.
Similarly, $f=\gl\qw Tf$ is  constant on $\oi$, and $f(n)=0$.
By \eqref{T}, we have
\begin{equation}
\gl f(x)=Tf(x)=
\int_0^{1\lor x} \Bigpar{\frac{1}{1\lor x}-\frac{1}{n}} f(y)\dd y
+ \int_{1\lor x}^n \Bigpar{\frac{1}{y}-\frac{1}{n}} f(y)\dd y,
\end{equation}
and it follows that $f$ is continuously differentiable on $(1,n)$, with
\begin{equation}\label{f'}
\gl f'(x)=(Tf)'(x)=
-\frac{1}{x^2}\int_0^{ x} f(y)\dd y,
\qquad 1<x<n.
\end{equation}

Conversely, if $f$ is continuous on $\on$, constant on $\oi$ and satisfies
\eqref{f'} on $(1,n)$ with the boundary condition $f(n)=0$, then $Tf=\gl f$.

Letting $F(x):=\int_0^x f(y)\dd y$, we have $F'(x)=f(x)$, and thus
\eqref{f'} yields the differential equation
\begin{equation}
  \label{F''}
  F''(x)=-\gl\qw x\qww F(x),
\qquad 1<x<n.
\end{equation}
Furthermore, 
$F(1)=\intoi f(x)\dd x = f(1)=F'(1)$ and $  F'(n)=f(n)=0$.
Hence, we have the boundary conditions (with derivatives at the
endpoints 1 and $n$ interpreted by continuity)
\begin{align}
  F'(1)&=F(1), \label{Fbc1}\\
  F'(n)&=0.\label{Fbc2}
\end{align}
Conversely, if $F$ solves \eqref{F''} on $(1,n)$ with the boundary
conditions \eqref{Fbc1}--\eqref{Fbc2}, then $f(x):=F'(x\lor 1)$ 
solves \eqref{f'} and
$\gl$ is an eigenvalue of $T$.

For a given $\gl>0$, the differential equation \eqref{F''} has the solutions
\begin{equation}\label{FF}
  F(x) = A x^{\ga_+} + B x^{\ga_-},
\end{equation}
where $\ga_\pm$ are the solutions of $\ga(\ga-1)=-\gl\qw$, and thus
\begin{equation}\label{a+-}
  \ga_\pm = \frac12\pm \sqrt{\frac{1}4-\frac{1}{\gl}}.
\end{equation}
If $\gl=4$, so we have a double root $\ga_+=\ga_-=\frac12$, we instead have
the solutions
\begin{equation}\label{Fdouble}
  F(x)=Ax\qq + B x\qq\ln x.
\end{equation}

Suppose that $\gl>0$ with $\gl\neq4$.
It is easily verified that the solutions \eqref{FF} that satisfy
\eqref{Fbc1} are multiples of $F(x):=\ga_+x^{\ga_+}-\ga_-x^{\ga_-}$.
Hence, $\gl$ is an eigenvalue of $T$ if and only if this function
satisfies \eqref{Fbc2}, i.e., if and only if
\begin{equation}\label{kod}
  \ga_+^2 n^{\ga_+-1} =   \ga_-^2 n^{\ga_--1}.
\end{equation}
Furthermore, then this eigenvalue is simple.

Consider first the case $0<\gl<4$. Then \eqref{a+-} yields the complex roots
$\ga_\pm=\frac12\pm \go\ii$, with $\go=\sqrt{1/\gl-1/4}$ and thus
\begin{equation}\label{kakk}
  \gl=\frac{1}{\go^2+\frac{1}{4}}=\frac{4}{1+4\go^2}.
\end{equation}
We rewrite \eqref{kod} as
\begin{equation}
\Bigparfrac{\frac12+\go\ii}{\frac12-\go\ii }^2e^{2\go\ln n\,\ii }=1,
\end{equation}
or, taking  logarithms,
\begin{equation}\label{ros}
  4\Im\ln \bigpar{1+2\go\ii} + 2\go \ln n \in 2\pi \bbZ.
\end{equation}
The \lhs{} of \eqref{ros} is a continuous increasing function of
$\go\in\ooo$, with the value 0 for $\go=0$.
Hence, for a given $n\ge2$,
there is for each integer $k\ge1$ exactly one solution $\go_k>0$ with
\begin{equation}\label{rosk}
  4\Im\ln \bigpar{1+2\go_k\ii} + 2\go_k\ln n = 2\pi k,
\end{equation}
and it follows, by \eqref{kakk}, that the eigenvalues of $T$ in $(0,4)$ are
\begin{equation}\label{glk}
  \gl_k:=\frac{4}{4\go_k^2+1},
\qquad k=1,2,\dots.
\end{equation}
In fact, these are all the non-zero eigenvalues,
since if $\gl>4$, so $\ga_\pm$ are real with $\ga_+>\ga_-$, then \eqref{kod}
cannot hold, and a similar argument shows that no non-zero $F$ of the form
\eqref{Fdouble} satisfies \eqref{Fbc1}--\eqref{Fbc2}.
(This also follows from \refR{RToo} below.)
Hence, \eqref{mgfhL} shows that, for every $t>-1/8$, at least,
\begin{equation}\label{mgfq}
  \E e^{-t\hL_n}=\prodk \bigpar{1+2\gl_k t}\qqw
=\prodk \Bigpar{1+\frac{8t}{1+4\go_k^2}}\qqw.
\end{equation}

Note that $\Im\ln(1+2\go_k\ii)\in(0,\pi/2)$, and thus \eqref{rosk} yields
\begin{equation}
  \label{qk}
\frac{\pi}{\ln n}(k-1) <\go_k < \frac{\pi}{\ln n}k.
\end{equation}

\begin{remark}\label{RToo}
The norm of $T=T_n$ is $\gl_1=4/(1+4\go_1^2)=4-O(1/\ln^2 n)$, see \eqref{glk}
and \eqref{qk}.
If we replace the lower cutoff $1$ in \eqref{T} by $a$, which by homogeneity
and a change of 
variables is equivalent to considering $T_{n/a}$, and then let $a\to0$ and
$n\to\infty$, we obtain as a weak limit of $T$ the integral operator
on $L^2$ with kernel $1/(x\lor y)$.
This limiting operator $\Too$
is bounded on $L^2\ooo$ with norm $4$, but it is not compact and has no
eigenvectors.  That the norm is  $4$ follows from
the result for $T_n$ above; that it is at most $4$ follows also from
\cite[Theorem 319]{HLP}; that there are no eigenvectors in $L^2\ooo$ is seen
by a direct calculation similar to the one above;
that $\Too$ is bounded by not compact follows also from 
\cite[Theorems 3.1 and 3.2]{SJ139}, where a class of integral
operators (including both $\Too$ and  $T_n$) is studied.
\end{remark}

\subsection{Asymptotics of the moment generating function}

So far we have kept $n$ fixed.
Now consider asymptotics as \ntoo.
Taking logarithms in \eqref{mgfq}, and using \eqref{qk}, we obtain for $t>0$
\begin{multline}\label{qb}
\frac12  \sumk \ln \Bigpar{1+\frac{8t}{1+(4\pi^2/\ln^2 n)k^2}} 
< 
-\ln \E e^{-t\hL_n}
\\
<\frac12   \sumko \ln \Bigpar{1+\frac{8t}{1+(4\pi^2/\ln^2 n)k^2}}.
\end{multline}
For $-1/8<t<0$, \eqref{qb} holds with the inequalities reversed.
Hence, for a fixed $t>-1/8$, uniformly in $n$,
\begin{equation}\label{qc}
  \begin{split}
\ln \E e^{-t\hL_n}
&= -\frac12  \sumk \ln \Bigpar{1+\frac{8t}{1+(4\pi^2/\ln^2 n)k^2}} + O(1)
\\&
=   -\frac12\intoo \ln \Bigpar{1+\frac{8t}{1+(4\pi^2/\ln^2 n)x^2}}\dd x + O(1).
\\&
=  -\frac{\ln n}{4\pi} \intoo \ln \Bigpar{1+\frac{8t}{1+ y^2}}\dd y + O(1).
  \end{split} 
\end{equation}
Furthermore,
\begin{align}
& \intoo \ln \Bigpar{1+\frac{8t}{1+ y^2}}\dd y 
=
 \intoo \Bigpar{\ln \bigpar{1+8t+ y^2}- \ln \bigpar{1+ y^2}}\dd y 
\notag\\&\qquad
\begin{aligned}
=\Bigl[
y\bigpar{\ln(1+8t+y^2)-\ln(1+y^2)}+2\sqrt{1+8t}\arctan(y/\sqrt{1+8t})\quad&
\\
-2\arctan(y)
\Bigr]_0^\infty &
\end{aligned}
\notag\\&\qquad 
=\pi\bigpar{\sqrt{1+8t}-1}.  \label{qd}
\end{align}
Consequently, we have shown, by \eqref{qc} and \eqref{qd}:

\begin{theorem}\label{TL2}
  For any fixed $t>-1/8$, and all $n\ge2$,
  \begin{equation}
\ln\E e^{-t\hL_n}
    =\frac{1-\sqrt{1+8t}}4\ln n + O(1).
  \end{equation}
\end{theorem}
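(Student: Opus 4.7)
The plan is to observe that essentially all the substantive work has already been carried out in deriving \eqref{qc} and \eqref{qd}, and the theorem follows by direct substitution: plugging \eqref{qd} into \eqref{qc} gives
\begin{equation*}
\ln\E e^{-t\hL_n} = -\frac{\ln n}{4\pi}\cdot \pi\bigpar{\sqrt{1+8t}-1}+O(1) = \frac{1-\sqrt{1+8t}}{4}\ln n + O(1),
\end{equation*}
which is the claim. So a proof proposal reduces to (i) explaining why \eqref{qc} is the right reduction and (ii) how the integral in \eqref{qd} is evaluated.

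For step (i), starting from the product expression \eqref{mgfq}, I would take logarithms to write $-\ln\E e^{-t\hL_n} = \frac12\sum_k \ln\bigpar{1+8t/(1+4\omega_k^2)}$, then use the two-sided bound \eqref{qk} on $\omega_k$ to sandwich this between the sums over $k\ge 1$ with $\omega_k$ replaced by $\pi(k-1)/\ln n$ and $\pi k/\ln n$. Since the summand is monotone in $k$ (decreasing for $t>0$, increasing for $-1/8<t<0$), a standard sum-to-integral comparison gives error $O(1)$ uniformly in $n$ for fixed $t>-1/8$, yielding the integral in \eqref{qc} after the substitution $y=2\pi x/\ln n$. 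One must be slightly careful with the sign conventions when $t<0$: the inequalities in \eqref{qb} reverse, but the integrand remains integrable at infinity (since $\ln(1+8t/(1+y^2))=O(1/y^2)$) and at zero (since $t>-1/8$ keeps $1+8t>0$), so the final bound is the same.

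For step (ii), I would evaluate $\intoo \ln\bigpar{1+8t/(1+y^2)}\dd y$ by writing it as $\intoo\bigsqpar{\ln(1+8t+y^2)-\ln(1+y^2)}\dd y$. Each piece diverges at $\infty$, but their difference is $O(1/y^2)$, so the integral converges. Using the antiderivative $\int \ln(a^2+y^2)\dd y = y\ln(a^2+y^2)-2y+2a\arctan(y/a)$ for both pieces and taking the limit $y\to\infty$, the $y\ln y$ terms cancel, the $-2y$ terms cancel, and only the $\arctan$ contributions survive, giving $\pi\sqrt{1+8t}-\pi$. This is exactly \eqref{qd}.

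The only mildly delicate point — and therefore the main obstacle — is verifying that the passage from the sum to the integral in \eqref{qc} really costs only $O(1)$ uniformly in $n$. This uses that for fixed $t>-1/8$, the integrand $y\mapsto\ln(1+8t/(1+y^2))$ is bounded on $\ooo$ with only an $O(1/y^2)$ tail, so that a Riemann-sum error estimate gives $O(1)$ independently of the mesh $\pi/\ln n$. With that in hand, the theorem is essentially a packaging of \eqref{qc} and \eqref{qd}.
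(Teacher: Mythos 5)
Your proposal is correct and follows essentially the same route as the paper: Theorem~\ref{TL2} is, as you say, a direct packaging of \eqref{qc} and \eqref{qd}, and your derivation of \eqref{qc} from \eqref{mgfq} via the bracketing \eqref{qb} and a monotone sum-to-integral comparison with uniform $O(1)$ error, as well as your evaluation of the integral in \eqref{qd} by antiderivatives of $\ln(a^2+y^2)$, reproduce the argument in the text. Your remark that the delicate point is the uniformity in $n$ of the $O(1)$ error, resting on the boundedness of the integrand (value $\ln(1+8t)$ at $y=0$) together with its $O(1/y^2)$ tail, is exactly the justification needed and is consistent with what the paper relies on.
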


For $t=-1/8$, a little extra work shows that \eqref{qc} holds with the error
term $O(\ln \ln n)$. If $t<-1/8$, then $-2t\gl_1>1$ for large $n$, and thus
$\E e^{-t\hL_n}=\infty$.

\subsection{A second proof of Theorem \ref{t1}}
\label{sec:new_proof}

By \refT{TL2} (and the comments after it, for completeness),
\begin{equation}\label{LDP0}
  \lim_\ntoo\frac{1}{\ln n} \ln \E e^{t \hL_n}
=
\gL(t):=
\begin{cases}
\frac{1-\sqrt{1-8t}}4, & t\le \xfrac{1}{8},\\
+\infty, & t >1/8.
\end{cases}
\end{equation}

The Legendre transform of $\gL(t)$ is, by a simple calculation,
\begin{equation}\label{gLx}
  \gLx(x):=\sup_{t\in\bbR} \bigpar{tx-\gL(t)}
=
\begin{cases}
  \frac{1}{8x}(x-1)^2 = \frac{x}{8}+\frac{1}{8x}-\frac{1}4, & x>0,\\
+\infty, & x\le 0.
\end{cases}
\end{equation}

By \eqref{LDP0} and
the G\"artner--Ellis theorem, see \eg{} \cite[Theorem 2.3.6]{DZ}
(and Remark (a) after it), the large deviation principle holds for the
variables $\hL_n/\ln n$ with rate function $\gLx(x)$ in \eqref{gLx}, 
in the sense that, for example,
\begin{equation}\label{LDP1}
\lim_\ntoo \frac{\ln \P(\hL_n\le c\ln n)}{\ln n} = -\gLx(c), \qquad 0<c\le1.
\end{equation}
Note that $\gLx(c)=K(c)$ given by \eqref{3a}.
Consequently,
we have shown the following Browian analogue of Theorem \ref{t1}.

\begin{theorem} \label{TB1}
For every $c\in(0,1]$,
  \begin{equation}\label{LDP2}
  \P(\hL_n\le c\ln n) =n^{-K(c)+o(1)}.
\end{equation}
\end{theorem}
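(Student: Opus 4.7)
The plan is to apply the Gärtner--Ellis theorem, using \refT{TL2} as the essential input on the logarithmic moment generating function. First, I would rewrite \refT{TL2} (with $t\mapsto -t$) to conclude that for each fixed $t<1/8$,
\begin{equation*}
\gL(t):=\lim_\ntoo \frac{1}{\ln n}\ln \E e^{t\hL_n} = \frac{1-\sqrt{1-8t}}{4},
\end{equation*}
and for $t>1/8$ that $\gL(t)=+\infty$, the latter following from the observation right after \refT{TL2} that $2t\gl_1 > 1$ eventually, so the moment generating function diverges.

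Next, I would verify the steepness hypothesis required for the upper bound in Gärtner--Ellis. The function $\gL$ is finite and smooth on $(-\infty,1/8)$ with $\gL'(t)=(1-8t)^{-1/2}\to+\infty$ as $t\uparrow 1/8$, so $\gL$ is essentially smooth in the sense of \cite[Def.~2.3.5]{DZ}; it is also convex (a direct check, or since it is a limit of log moment generating functions). Thus the Gärtner--Ellis theorem applies and gives the large deviation principle at speed $\ln n$ for $\hL_n/\ln n$ with rate function equal to the Legendre transform $\gLx$ of $\gL$.

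Then I would compute the Legendre transform explicitly: for $x>0$, the optimizer $t^*$ solves $x=\gL'(t)=(1-8t)^{-1/2}$, giving $t^*=(1-x^{-2})/8$, whence
\begin{equation*}
\gLx(x)= t^*x - \gL(t^*) = \frac{(x-1)^2}{8x} = K(x),
\end{equation*}
and $\gLx(x)=+\infty$ for $x\le 0$. Since $\gLx$ is continuous and decreasing on $(0,1]$, the closed-set and open-set bounds of the LDP coincide for the half-line $(-\infty,c]$ with $c\in(0,1]$, yielding
\begin{equation*}
\frac{\ln\P(\hL_n\le c\ln n)}{\ln n}\to -\inf_{x\le c}\gLx(x) = -\gLx(c) = -K(c),
\end{equation*}
which is exactly \eqref{LDP2}.

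The only genuinely delicate point is the steepness/essential-smoothness verification at the boundary $t=1/8$; once that is in hand, the rest is a mechanical application of Gärtner--Ellis together with the Legendre computation. Since \refT{TL2} provides a strong $O(1)$ error in the log-Laplace transform rather than only an $o(\ln n)$ error, one could alternatively give a direct Chernoff-style proof of the upper bound without invoking the general theorem, and match with a Chernoff-tilting argument for the lower bound, but Gärtner--Ellis is the cleanest route here.
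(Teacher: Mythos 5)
Your proposal is correct and follows essentially the same route as the paper: both pass through \refT{TL2} to obtain $\gL(t)=(1-\sqrt{1-8t})/4$, compute the Legendre transform $\gLx=K$, and invoke the G\"artner--Ellis theorem to get the LDP for $\hL_n/\ln n$. The only difference is that you spell out the essential-smoothness (steepness) check at $t=1/8$ explicitly, whereas the paper delegates this to the remark following \cite[Theorem~2.3.6]{DZ}.
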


\begin{proof}[Second proof of \refT{t1}]
By \refT{TB1} and \refL{L5}.
\end{proof}

Moreover, \eqref{LDP0} and the G\"artner--Ellis theorem
give also a corresponding result for the upper tail.
\begin{theorem} \label{TB2}
For every $c\in [1,\infty)$,
  \begin{equation}\label{LDP3}
  \P(\hL_n\ge c\ln n) =n^{-K(c)+o(1)}.
\end{equation}
\end{theorem}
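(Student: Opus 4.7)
The plan is to derive \refT{TB2} from \refT{TL2} via the Gärtner--Ellis theorem, in direct parallel with the lower-tail proof of \refT{TB1} in \refS{sec:new_proof}. The mgf asymptotics of \refT{TL2}, together with the comment that $\E e^{-t\hL_n}=\infty$ for $t<-1/8$ when $n$ is large, identify the limiting logarithmic moment generating function of $\hL_n$ as $\gL(t)$ given by \eqref{LDP0}, with effective domain $(-\infty,1/8]$. The upper tail requires $t>0$, which lies strictly inside the effective domain, so the ingredients are already in place.

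First, I would verify the hypotheses of the Gärtner--Ellis theorem \cite[Thm.~2.3.6]{DZ}: $\gL$ is convex and lower semicontinuous, and it is \emph{essentially smooth} on the interior $(-\infty,1/8)$ of its effective domain, since it is differentiable there with $\gL'(t)=1/\sqrt{1-8t}$, and $\gL'(t)\to+\infty$ as $t\uparrow 1/8$. This steepness at the right endpoint is precisely what is needed to get the lower bound on the upper tail, so the full LDP applies to $\hL_n/\ln n$ at speed $\ln n$, with rate function $\gLx$.

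Second, I would reuse the computation of the Legendre transform from \eqref{gLx}: solving $\gL'(t)=c$ gives $t^\star=(c^2-1)/(8c^2)$, which lies in $[0,1/8)$ for every $c\ge 1$, and plugging back yields
\begin{equation*}
\gLx(c)=t^\star c-\gL(t^\star)=\frac{(c-1)^2}{8c}=K(c).
\end{equation*}
Since $K$ is continuous and strictly increasing on $[1,\infty)$ with $K(1)=0$, we have $\inf_{x\ge c}K(x)=K(c)$ and $\inf_{x> c-\eps}K(x)\to K(c)$ as $\eps\downarrow 0$ for any $c\ge 1$. Applying the Gärtner--Ellis upper bound to the closed set $[c,\infty)$ and the lower bound to the open set $(c-\eps,\infty)$ then gives \eqref{LDP3}. (For $c=1$ one just needs $\P(\hL_n\ge\ln n)=n^{o(1)}$, which is immediate from $K(1)=0$.)

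The main technical point is the verification of essential smoothness at the boundary $t=1/8$, but this is transparent from the explicit formula; the upper bound alone already follows from Markov's inequality applied with $t=t^\star$ and \refT{TL2}, and only the matching lower bound requires the Gärtner--Ellis machinery. I do not anticipate any serious obstacle beyond these routine verifications.
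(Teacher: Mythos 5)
Your approach is exactly the one the paper takes: \refT{TB2} is derived from the moment-generating-function asymptotics in \refT{TL2} (equivalently, \eqref{LDP0}) via the G\"artner--Ellis theorem, and the paper itself disposes of it in one line after proving \refT{TB1}. The essential-smoothness check at $t=1/8$ and the Legendre-transform computation $t^\star=(c^2-1)/(8c^2)$, $\gLx(c)=K(c)$ are the right details to fill in. One small slip: for the lower bound you should apply the G\"artner--Ellis lower bound to an open subset of $\{x\ge c\}$, i.e.\ to $(c,\infty)$ (which already gives $-\inf_{x>c}K(x)=-K(c)$ by continuity and monotonicity of $K$ on $[1,\infty)$), not to $(c-\eps,\infty)$, which is not contained in $\{x\ge c\}$ and so does not bound $\P(\hL_n\ge c\ln n)$ from below; the $\eps$ is unnecessary here. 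Otherwise the argument matches the paper.
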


This result too transfers from the Brownian version to the random walk.
\begin{theorem} \label{TU}
For every $c\in [1,\infty)$,
  \begin{equation}\label{tu}
  \P(L_n\ge c\ln n) =n^{-K(c)+o(1)}.
\end{equation}
\end{theorem}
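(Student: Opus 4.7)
I transfer \refT{TB2} to the random walk by establishing upper-tail analogues of Lemmas \ref{L1}--\ref{L5}. First, the coupling used in the proof of \refL{L3} is symmetric and yields $\Pr[L_n\ge y]\ge\Pr[L_n''\ge y]$ along with the Brownian counterparts. Next, the KMT coupling underlying \refL{L4} controls $|\sqrt{L_n''}-\sqrt{\tilde L_n''}|$ simultaneously from above and below, so each of \eqref{l4a}--\eqref{l4d} has an upper-tail version in which the inequalities are flipped; for instance
\begin{equation*}
\Pr[L_n''\ge c\ln n]\le\Pr[\tilde L_n''\ge(c-\eps)\ln n]+O(n^{-a}),
\end{equation*}
and the analogues for $\tilde L_n''\leftrightarrow \hat L_n''$ follow by the same Riemann-sum estimate as in the proof of \refL{L4}.

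For an upper-tail \refL{L1}, I reuse the Minkowski bound $\sqrt{L_n}\le\sqrt{L_n''}+|S_A|/\sqrt A+\sqrt{L_A}$. Since $L_n''$ depends only on $(S_i-S_A)_{i>A}$, it is independent of $(S_A,L_A)$; and $|S_A|/\sqrt A+\sqrt{L_A}=O(\sqrt{\ln\ln n})=o(\sqrt{\ln n})$ on a probability-$\tfrac12$ event by Chebyshev. Squaring and rearranging shows that, on this event and for large $n$, $L_n''\ge(c+\eps)\ln n$ forces $L_n\ge c\ln n$, so by independence
\begin{equation*}
\Pr[L_n\ge c\ln n]\ge\tfrac12\Pr[L_n''\ge(c+\eps)\ln n],
\end{equation*}
with the analogous inequalities for $\tilde L$ and $\hat L$.

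Chaining these three ingredients as in the proof of \refL{L5} yields the upper-tail analogue
\begin{align*}
\Pr[L_n\ge c\ln n]&\le 2\Pr[\hat L_n\ge(c-\eps)\ln n]+O(n^{-a}),\\
\Pr[\hat L_n\ge c\ln n]&\le 2\Pr[L_n\ge(c-\eps)\ln n]+O(n^{-a}),
\end{align*}
after which \refT{TB2} together with continuity of $K$ on $[1,\infty)$ (first $n\to\infty$, then $\eps\to 0^+$) gives the required $\Pr[L_n\ge c\ln n]=n^{-K(c)+o(1)}$.

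The main technical obstacle will be the a priori exponential tail bound on $L_A=\sum_{i=1}^A S_i^2/i^2$ needed in the L1-analogue, since the trivial bound $L_A\le A=\ln^{10}n$ is too weak. I would obtain the bound by applying \refT{TL2} to $\hat L_A$, yielding $\E e^{t\hat L_A}=A^{O(1)}$ for $t<1/8$, then using the KMT coupling on the interval $[0,A]$ together with Markov's inequality to deduce $\Pr[L_A\ge\eps\ln n]\le n^{-\eps/8+o(1)}$. This is polynomially small for any fixed $\eps>0$, which (combined with the sub-Gaussian tail $\Pr[|S_A|/\sqrt A>R]\le 2e^{-R^2/2}$) suffices to make the bad-event contributions negligible after choosing parameters appropriately and taking the iterated limit.
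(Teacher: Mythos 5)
Your overall plan --- mirror Lemmas \ref{L1}--\ref{L5} with the inequalities reversed and invoke \refT{TB2} --- matches the paper's terse proof sketch, but there are two gaps. The first is a sign error in the ``upper-tail \refL{L1}'': the Minkowski bound $\sqrt{L_n}\le\sqrt{L_n''}+|S_A|/\sqrt A+\sqrt{L_A}$ is an \emph{upper} bound on $L_n$ and cannot, after ``squaring and rearranging,'' force $L_n\ge c\ln n$ from $L_n''\ge(c+\eps)\ln n$. (The reverse triangle inequality $\sqrt{L_n}\ge\sqrt{L_n'}\ge\sqrt{L_n''}-|S_A|/\sqrt A$ is what would give that implication, and in any case it is moot because the \refL{L3}-analogue already yields $\Pr[L_n\ge y]\ge\Pr[L_n''\ge y]$ outright, with no $\eps$ and no factor $\tfrac12$.) The consequence is that your three stated ingredients only ever produce \emph{lower} bounds on $\Pr[L_n\ge\cdot]$ in terms of $\Pr[L_n''\ge\cdot]$, so the chain to the target upper-tail inequality $\Pr[L_n\ge c\ln n]\le 2\Pr[\hL_n\ge(c-\eps)\ln n]+O(n^{-a})$ is not actually assembled; the genuinely new step needed is $\Pr[L_n\ge c\ln n]\le\Pr[L_n''\ge(c-\eps)\ln n]+\Pr[|S_A|/\sqrt A+\sqrt{L_A}\ge\theta\sqrt{\ln n}]$ with $\theta\approx\sqrt c-\sqrt{c-\eps}$, and this is where the $L_A$-tail you correctly flag as the obstacle enters.

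Your estimate $\Pr[L_A\ge\eps_1\ln n]\le n^{-\eps_1/8+o(1)}$ is right (and obtainable as you describe), but a union bound at a fixed threshold does \emph{not} recover the sharp exponent $K(c)$. Already at $c=2$: to make $n^{-\eps_1/8}\le n^{-K(2)}=n^{-1/16}$ you need $\eps_1\ge\tfrac12$, and then the main term $\Pr[L_n''\ge\tfrac32\ln n]\approx n^{-K(3/2)}=n^{-1/48}\gg n^{-1/16}$. In general the union-bound exponent is $\min(K(c-\eps_1),\eps_1/8)$, which is strictly less than $K(c)$ for every $\eps_1>0$ and degenerates to $0$ as $\eps_1\to0$, so ``taking the iterated limit'' is vacuous. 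Recovering the sharp $K(c)$ requires integrating over the level of $L_A$ rather than thresholding: condition on the first $A$ steps, use $\Pr[L_n''\ge y\ln n]=n^{-K(y)+o(1)}$, and exploit that $x\mapsto x/8+K(c-x)$ is increasing on $[0,c)$ (its derivative is $1/(8(c-x)^2)>0$) with minimum $K(c)$ at $x=0$, combined with $\E e^{tL_A}=n^{o(1)}$ for any fixed $t<1/8$. This is a genuine departure from the lower-tail proof, where \refL{L1}'s probability-$\tfrac12$ event is simply absorbed into the $o(1)$; the paper's ``proved in the same way as above'' glosses over this asymmetry.
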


\begin{proof}
  This follows by \refT{TB2} and an upper tail version of \refL{L5} with
$\Pr(L_n \le c\ln  n)$ replaced  by $\Pr(L_n\ge c\ln n)$, and so on; this
version is proved in the same way as above, so we omit the details.
\end{proof}

\section{Conditional functional limit laws}\label{section:conditionallaw}

In this section, we  study the preferential attachment process
$\{(X_k,Y_k): k \geq 2\}$
defined in \refS{sec:model}, and
establish functional limit theorems for the trajectories, conditional on the
event $BINGO(n,n)$. 
We define $\gD_k:=X_k-Y_k$, so that the process is given by \eqref{XYgD},
and state the results in terms of the 
stochastic process $\{ \Delta_k : 2 \leq k \leq 2n \}$, conditional on 
$BINGO(n,n)$; recall that $BINGO(n,n)$ 
in this notation is the event $\gD_{2n}=0$.

In particular, we prove \refT{Toi} stated in \refS{sec:model}. We also state
and prove related functional limit results for the process at times $o(n)$.
We establish the results using the usual two step approach--- 
first, we establish finite dimensional convergence, and then establish
tightness (see e.g.\ \cite{Billingsley}).
The proofs proceed using the local CLT  estimates in \refS{sec:cont_time2},
in particular \refT{Tbingo}. Finite-dimensional convergence follows by
straightforward calculations, but
our proof of tightness is rather complicated, and uses several lemmas.
We base the proof of tightness on a theorem by Aldous
\cite{Aldous-tightness}, see 
\refS{SS:tightness} below, but for technical resons discussed there, we do
not use Aldous's result directly. Instead, we state and prove in
\refS{SS:tightness} a variant of it that is convenient in our situation.
We then prove \refT{Toi} in \refS{SS:pfToi}, and give corresponding results
for small times in \refS{SS:on}.

Note that the processes $(X_k,Y_k)$, $\gD_k$ and $n\qqw\gD_{\floor{2nt}}$
are  Markov processes, 
and so they are (by a simple, general, calculation) 
also conditioned on $BINGO(n,n)$.

\subsection{A general criterion for tightness}\label{SS:tightness}

Our proof uses a tightness criterion by Aldous \cite{Aldous-tightness}
(and, in a slightly different formulation, Mackevi{\v c}ius
\cite{Mackevicius}), see also 
\cite[Lemma 3.12]{whitt_fclt}. 
 Recall  that
a sequence of $\Doo$-valued stochastic processes 
$\{ Z_n(t) : n \geq 1 \}$ is 
\emph{stochastically bounded} if for every $T >0$, 
\begin{align}\label{SB}
\lim_{M \to \infty} \sup_{n } \P\Bigsqpar{ \max_{0 \le t \le T } | X_n(t)| >  M}  =0. 
\end{align}
It is well-known, and easy to see, that it suffices to show \eqref{SB} with 
$\sup_n$ replaced by $\limsup_\ntoo$.

\begin{lemma}[\cite{Aldous-tightness,Mackevicius,whitt_fclt}]\label{LA}
Suppose that $Z_n(t)$ is a sequence of stochastic processes in $\Doo$ 
satisfying the following conditions.
\begin{romenumerate}
\item \label{LAa}
$\set{Z_n(t):n\ge1}$ is stochastically bounded.
\item \label{LAb}
For each $n\ge1$, $T >0$, $\varepsilon >0$, $\lambda < \infty$ and $\gd>0$,
there exists a number
$
\alpha_n(\lambda, \varepsilon, \delta, T)$
such that
\begin{align}\label{la}
\Pr\bigsqpar{ | Z_n(u) - Z_n(t_m)| > \varepsilon \bigm| Z_n(t_1), \cdots,
  Z_n(t_m) } 
\leq \alpha_n(\lambda, \varepsilon, \delta, T)
\end{align}
\as{} on the event 
$\{\max_i |Z_n(t_i)| \leq \lambda \}$, 
for every finite sequence $\{t_i : 1 \leq i \leq m\}$ 
and $u$
with $0 \leq t_1 \leq t_2 \leq \cdots \leq t_m \leq u \leq T$ and 
$u - t_m \le \delta$.  
Furthermore, these numbers $\ga_n(\gl,\eps,\gd,T)$ satisfy
\begin{align}\label{laga}
\lim_{\delta \downarrow 0} 
 \limsup_{n \to \infty} \alpha_n (\lambda, \varepsilon, \delta, T) =0, 
\end{align}
for every $\gl,T,\eps$.
\end{romenumerate}
Then the sequence $Z_n(t)$ is tight in $\Doo$.
\end{lemma}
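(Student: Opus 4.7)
The plan is to verify, for each fixed $T>0$, the two classical sufficient conditions for tightness of the laws of $(Z_n)$ in $D[0,T]$: compact containment on $[0,T]$, and control of the \cadlag{} modulus of continuity $w'(Z_n,\gd,T)$ as $\gd\downarrow0$.  Tightness in $\Doo$ then follows by the standard reduction over an exhaustion $T\uparrow\infty$.  Compact containment on $[0,T]$ is exactly condition~\ref{LAa}, so only the modulus bound requires real work.

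Fix $\eps,\eta>0$.  Using \ref{LAa}, pick $\gl$ so that $\limsup_n\Pr\bigsqpar{\sup_{t\le T}|Z_n(t)|>\gl}<\eta$, and restrict attention to the complementary event $\cE_n$.  For a mesh $\gd>0$ consider the uniform grid $t_k=k\gd$, $0\le k\le N=\floor{T/\gd}+1$.  Condition~\ref{LAb}, applied with conditioning times $t_0,\dots,t_k$ and an arbitrary $u\in[t_k,t_{k+1}]$, yields on $\cE_n$ the pointwise bound
\[\Pr\bigsqpar{|Z_n(u)-Z_n(t_k)|>\eps/2\mid Z_n(t_0),\dots,Z_n(t_k)}\le\ga_n(\gl,\eps/2,\gd,T).\]
In particular the individual grid increments $|Z_n(t_{k+1})-Z_n(t_k)|$ are controlled, but the main obstacle is promoting this pointwise-in-$u$ bound into a uniform bound $\sup_{u\in[t_k,t_{k+1}]}|Z_n(u)-Z_n(t_k)|\le\eps$, which is what $w'$ actually requires.

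To bridge this gap I plan to iterate~\ref{LAb} on a dyadic refinement of each interval $[t_k,t_{k+1}]$: at depth $j$, partition $[t_k,t_{k+1}]$ into $2^j$ pieces of length $\gd/2^j$, enlarge the conditioning set to include the values of $Z_n$ at all dyadic points of depth $\le j$, and apply~\ref{LAb} at scale $\gd/2^j$ with a level-dependent tolerance $\eps_j$ (say $\eps_j=\eps/(2(j+1)^2)$), summing telescopically over $j$ via a Kolmogorov-style chaining argument.  By right-continuity of \cadlag{} paths the supremum over dyadic $u$ equals the supremum over all $u$, so this controls $\sup_{u\in[t_k,t_{k+1}]}|Z_n(u)-Z_n(t_k)|$ with high probability.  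Summing over the $O(1/\gd)$ intervals and then sending first $n\to\infty$ and afterward $\gd\downarrow 0$, one uses \eqref{laga} to drive the total failure probability below $2\eta$, giving the desired modulus bound.  The hardest step is exactly this chaining: because \eqref{laga} supplies only a qualitative double limit with no explicit rate for $\ga_n$, the dyadic tolerances $\eps_j$ and the order of the limits $\lim_{\gd\to0}\limsup_n$ must be arranged carefully so that at each fixed depth $j$ one sends $n\to\infty$ using \eqref{laga} before refining further, rather than attempting a single uniform-in-$j$ summation of the $\ga_n$'s.
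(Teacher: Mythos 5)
The paper does not prove this lemma; it is cited directly from Aldous, Mackevi{\v c}ius, and Whitt \cite{Aldous-tightness,Mackevicius,whitt_fclt}, so there is no internal proof to compare against. Evaluating your proposal on its own terms, the plan has a fundamental flaw that is independent of the chaining bookkeeping you flag at the end.

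Your strategy is to control $\sup_{u\in[t_k,t_{k+1}]}|Z_n(u)-Z_n(t_k)|$ over a \emph{fixed} dyadic grid, sum over the $O(1/\gd)$ cells, and conclude that the \cadlag{} modulus $w'(Z_n,\gd,T)$ is small. But controlling the oscillation on a fixed partition is precisely the estimate one would need for the \emph{continuous} modulus $w$, i.e.\ tightness in $C$, and that is strictly stronger than what the hypotheses imply. The hypotheses of Lemma~\ref{LA} are satisfied by processes with macroscopic jumps. Take the trivial example $Z_n(t)=N(t)$ a fixed Poisson process: condition~\ref{LAa} is clear, and for $\eps<1$ the conditional probability in \eqref{la} is at most $1-e^{-\gd}$, so \eqref{laga} holds. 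Yet on any fixed interval of length $\gd$ the oscillation is at least $1$ with probability $\approx\gd$, and summing over the $T/\gd$ cells shows that with probability $\approx 1-e^{-T}$ (not small) some cell has oscillation $\ge 1$. Your argument would then bound $w'$ below rather than above. The resolution, which is the content of Aldous's theorem, is that $w'$ allows the partition to depend on the path: one must place the partition endpoints at (discretized) stopping times such as $\gs_{i+1}=\inf\{t>\gs_i:|Z_n(t)-Z_n(\gs_i)|>\eps\}$ and then bound $\Pr[\gs_{i+1}-\gs_i<\gd]$ using \eqref{la}, converting the stopping times to finite deterministic grids in the way Billingsley \cite{Billingsley-tightness} does so that the conditioning in \eqref{la} on finitely many values $Z_n(t_1),\dots,Z_n(t_m)$ suffices. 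This path-dependent choice is not a refinement of your scheme; it is a different mechanism, and no amount of chaining on a fixed grid can replace it.

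The secondary issue you raise yourself is also a genuine gap, not a technicality to be finessed. Your chaining needs $\sum_j 2^j\,\alpha_n(\gl,\eps_j,\gd/2^j,T)$ to be small, but \eqref{laga} is a double limit in $(\gd,n)$ for each \emph{fixed} $\eps$, with no uniformity across the shrinking tolerances $\eps_j$ and no rate. Reordering the limits does not help: once $\gd$ is fixed and $n$ is sent to infinity at depth $j$, you cannot ``send $n\to\infty$ again'' at depth $j+1$; there is a single sequence of processes and a single $n\to\infty$ to take, and the series over $j$ has to be summed for each fixed $n$ before that limit. So even if the fixed-grid approach were sound in principle, the hypotheses provide no quantitative input to make the dyadic sum converge. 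I recommend replacing the chaining plan with the Aldous/Billingsley stopping-time argument sketched above.
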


For Markov processes (as in our case),
the condition \eqref{la} simplifies: 
by the Markov property,
it suffices to consider the case $m=1$.

A technical problem that prevents us from a direct application of \refL{LA}
to our processes, using \refT{Tbingo} to verify the condition,
is that in \eqref{la}, $u-t_m$ may be
arbitrarily small, while in \refT{Tbingo}, $B/A$ is supposed to be
bounded below by some $\cg>1$. We thus first prove the following variant of
\refL{LA}, where we have a lower bound on $u-t_m$. 
For simplicity, we state the lemma only in the Markov case.
We assume also, again for simplicity, 
that the processes are strong Markov; recall that this means,
informally, that the Markov property holds not only at fixed times, but also
at stopping times. A discrete-time Markov process, or a process such as our
$n\qqw\gD_{\floor{2nt}}$ that essentially has discrete time, 
is automatically strong Markov.

The main difference from \refL{LA} is that the condition $0\le u-t_m\le\gd$
is replaced by $\gd\le u-t\le 2\gd$.
We also add a condition that the jumps are uniformly bounded 
(which trivially holds in our case); 
we do not know whether this condition really is needed.
(The condition can presumably be weakened to stochastic boundedness of the
jumps, as in \cite{Billingsley-tightness}, but we have not pursued this.)

\begin{lemma}\label{LT}
Suppose that $Z_n(t)$ is a sequence of strong 
Markov processes in $\Doo$ 
satisfying the following conditions.
\begin{romenumerate}
\item \label{LTa}
$\set{Z_n(t):n\ge1}$ is stochastically bounded.
\item \label{LTb}
For each $n\ge1$, $T >0$, $\varepsilon >0$, $\lambda < \infty$ and $\gd>0$,
there exists a number
$
\alpha_n(\lambda, \varepsilon, \delta, T)$
such that
\begin{align}\label{lt1}
\Pr\bigsqpar{| Z_n(u) - Z_n(t)| > \varepsilon \bigm| 
Z_n(t) } 
\leq \alpha_n(\lambda, \varepsilon, \delta, T)
\end{align}
\as{} on the event 
$\set{ |Z_n(t)| \leq \lambda }$,
for every $t$ and $u$ with $0\le t\le u\le T$ and $t+\gd\le u\le t+2\gd$.
Furthermore, these numbers $\ga_n$ satisfy
\begin{align}\label{lt2}
\lim_{\delta \downarrow 0} 
 \limsup_{n \to \infty} \alpha_n (\lambda, \varepsilon, \delta, T) =0, 
\end{align}
for every $\gl,T,\eps$.
\item \label{LTc}
The jumps are bounded by $1$:
\begin{equation}\label{lt3}
|  Z_n(t)-Z_n(t-)|\le1
\end{equation}
for all $n$ and $t$.
\end{romenumerate}
Then the sequence $Z_n(t)$ is tight in $\Doo$.
\end{lemma}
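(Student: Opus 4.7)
The plan is to invoke Aldous's tightness criterion in its stopping-time formulation (see e.g.\ Billingsley): a sequence of processes $\{Z_n\}$ in $\Doo$ is tight provided (a) for each $t$, $\{Z_n(t)\}$ is stochastically bounded (immediate from the uniform condition \ref{LTa}), and (b) for every $T>0$, every sequence of $Z_n$-stopping times $\tau_n\le T$, and every non-negative deterministic sequence $h_n\to 0$, one has $Z_n(\tau_n+h_n)-Z_n(\tau_n)\pto 0$. The sole obstacle is that condition \ref{LTb} controls $|Z_n(u)-Z_n(t)|$ only for $u-t\in[\delta,2\delta]$, whereas Aldous's condition asks for control over arbitrarily short gaps. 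We bridge this by a simple decomposition that expresses a short increment as a difference of two longer ones.

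To verify (b), fix $\eps,\eta>0$ and use \ref{LTa} to pick $M$ with $\Pr[\sup_{t\le T+1}|Z_n(t)|>M]<\eta/2$ for all large $n$. For any $\delta\in(0,1)$ and $n$ large enough that $h_n\le\delta/2$, write
\[
Z_n(\tau_n+h_n)-Z_n(\tau_n)=\bigpar{Z_n(\tau_n+\delta)-Z_n(\tau_n)}-\bigpar{Z_n(\tau_n+\delta)-Z_n(\tau_n+h_n)}.
\]
The first bracket is an increment from the stopping time $\tau_n$ of length $\delta\in[\delta,2\delta]$; by the strong Markov property at $\tau_n$ and \ref{LTb} applied with parameter $\delta_0=\delta$, on $\{|Z_n(\tau_n)|\le M\}$ it exceeds $\eps/2$ with conditional probability at most $\alpha_n(M,\eps/2,\delta,T+1)$. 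Since $h_n$ is deterministic, $\tau_n+h_n$ is also a stopping time, and the second bracket is an increment from $\tau_n+h_n$ of length $\delta-h_n\in[\delta/2,\delta]$; by strong Markov at $\tau_n+h_n$ and \ref{LTb} applied with parameter $\delta_0=\delta/2$ (so that $\delta-h_n\in[\delta_0,2\delta_0]$), on $\{|Z_n(\tau_n+h_n)|\le M\}$ it exceeds $\eps/2$ with conditional probability at most $\alpha_n(M,\eps/2,\delta/2,T+1)$. Both bounded events hold on $\{\sup_{t\le T+1}|Z_n(t)|\le M\}$, so combining yields
\[
\Pr\bigsqpar{|Z_n(\tau_n+h_n)-Z_n(\tau_n)|>\eps}\le\tfrac{\eta}{2}+\alpha_n(M,\eps/2,\delta,T+1)+\alpha_n(M,\eps/2,\delta/2,T+1).
\]

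By \eqref{lt2}, both $\alpha_n$ terms tend to $0$ as $\delta\to 0$ after first taking $\limsup_n$; hence $\limsup_n\Pr\bigsqpar{|Z_n(\tau_n+h_n)-Z_n(\tau_n)|>\eps}\le\eta$ for $\delta$ small enough. As $\eta>0$ is arbitrary, $Z_n(\tau_n+h_n)-Z_n(\tau_n)\pto 0$, which is Aldous's condition (b), and tightness follows. The main obstacle in the plan is the bridging decomposition itself: it requires applying \ref{LTb} not only at $\tau_n$ but also at the shifted stopping time $\tau_n+h_n$, which is possible because $h_n$ is deterministic and the processes $Z_n$ are assumed strong Markov. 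The bounded-jump condition \ref{LTc} does not enter this argument, consistent with the authors' own remark that its necessity is unclear.
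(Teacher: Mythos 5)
Your proof is correct, and it takes a genuinely different route from the paper's. The paper first proves an auxiliary result (\refL{LM}): it defines $\tau$ as the first time in $[t,t+\delta]$ at which the process deviates by $2\eps$ from $Z(t)$, examines $Z(t+2\delta)$, and thereby upgrades the $[\delta,2\delta]$-gap bound in condition (ii) to a supremum bound over $[t,t+\delta]$; the bounded-jump condition (iii) is used there precisely to bound $|Z(\tau)|$ after the possible overshoot at $\tau$. That supremum bound is then fed into the Mackevi{\v c}ius/Whitt form of Aldous's criterion (\refL{LA}). You bypass \refL{LM} entirely: you verify the stopping-time form of Aldous's criterion directly, decomposing a short increment $Z_n(\tau_n+h_n)-Z_n(\tau_n)$ as the difference of two medium-length increments started at the stopping times $\tau_n$ and $\tau_n+h_n$, and applying the strong Markov property at each of these. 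Since the relevant values $|Z_n(\tau_n)|$ and $|Z_n(\tau_n+h_n)|$ are controlled by the compact-containment event from condition (i) rather than by a jump bound after an overshoot, your argument never invokes condition (iii). In particular, it answers the authors' explicit question: condition (iii) is not needed for the conclusion. Each approach has a cost: the paper's reflection lemma delivers the stronger supremum estimate \eqref{lm00}, which is of independent use, whereas yours is more economical for the tightness conclusion alone. The one step worth making explicit in a write-up is the transfer of the deterministic-time bound \eqref{lt1} to stopping times $\tau_n$ and $\tau_n+h_n$ via the strong Markov property: for time-inhomogeneous processes this requires both the uniformity of $\alpha_n$ over $t\le T$ (which \eqref{lt1} provides) and, in full generality, a measurability argument; for the essentially discrete-time processes to which the lemma is applied here, that step is automatic.
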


We reduce to \refL{LA} using the following lemma.

 \begin{lemma}\label{LM}
   Suppose that $Z(t)$ is a strong Markov process in $\Doo$,
such that for some given numbers $\gl,T,\eps,\gd,\ga>0$, 
\begin{equation}
  \label{lm1}
\Pr\bigsqpar{|Z(u)-Z(t)|\ge\eps\mid Z(t)} \le\ga
\end{equation}
\as{} on the event $\set{|Z(t)|\le\gl+2\eps+1}$,
for
each $t$ and
$u$ with $0\le t\le u\le T+2\gd$ with $t+\gd\le u\le t+2\gd$.
Suppose further that the jumps in $Z(t)$ are bounded by $1$, i.e.,
\begin{equation}
  \label{lm2}
|Z(t)-Z({t-})|\le 1
\end{equation}
for all $t\ge0$.
Then, for each $t\le T$,
\begin{equation}\label{lm00}
  \Pr\Bigsqpar{\sup_{u\in[t,t+\gd]}|Z(u)-Z(t)|>2\eps\mid Z(t)}
\le 2\ga
\end{equation}
\as{} on the event $\set{|Z(t)|\le\gl}$.
 \end{lemma}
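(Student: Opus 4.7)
The plan is to reduce the supremum estimate \eqref{lm00} to two applications of the two-point hypothesis \eqref{lm1} via a first-exit stopping time, losing only a factor of two. First I introduce
\[
\tau := \inf\bigcpar{u\in[t,t+\gd]:\bigabs{Z(u)-Z(t)}>2\eps},
\]
with the convention $\inf\emptyset=+\infty$, and note that the target event $A:=\bigcpar{\sup_{u\in[t,t+\gd]}|Z(u)-Z(t)|>2\eps}$ coincides with $\{\tau\le t+\gd\}$, so $\tau$ is a stopping time bounded by $t+\gd$ on $A$.

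The next step is to collect the information about $Z(\tau)$ provided by the \cadlag{} path and the jump bound \eqref{lm2}. Right-continuity of $Z$ forces $|Z(\tau)-Z(t)|\ge 2\eps$ on $A$ (either $\tau$ attains the condition with strict excess, or one finds $u_n\downarrow\tau$ with $|Z(u_n)-Z(t)|>2\eps$ and takes the limit). At the same time, $|Z(\tau-)-Z(t)|\le 2\eps$ by the minimality of $\tau$, so \eqref{lm2} gives $|Z(\tau)|\le|Z(t)|+2\eps+1\le\gl+2\eps+1$ on $A\cap\{|Z(t)|\le\gl\}$. This is precisely what is needed to feed $\tau$ into \eqref{lm1}.

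Now I apply \eqref{lm1} at the pair $(\tau,t+2\gd)$ via the strong Markov property: the gap $t+2\gd-\tau$ lies in $[\gd,2\gd]$ because $\tau\in[t,t+\gd]$ on $A$, and the position bound $|Z(\tau)|\le\gl+2\eps+1$ was just verified, so
\[
\Pr\bigsqpar{\bigabs{Z(t+2\gd)-Z(\tau)}\ge\eps\bigm|\cF_\tau}\le\ga
\]
\as{} on $A\cap\{|Z(t)|\le\gl\}$. The reverse triangle inequality together with $|Z(\tau)-Z(t)|\ge 2\eps$ shows that on $A$, $|Z(t+2\gd)-Z(\tau)|<\eps$ forces $|Z(t+2\gd)-Z(t)|>\eps$. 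Taking conditional expectation given $Z(t)$ yields
\[
(1-\ga)\Pr\bigsqpar{A\mid Z(t)}\le \Pr\bigsqpar{\bigabs{Z(t+2\gd)-Z(t)}>\eps\mid Z(t)}
\]
\as{} on $\{|Z(t)|\le\gl\}$. A second, direct, application of \eqref{lm1} at $(t,t+2\gd)$, legal since $t\le T$ and $|Z(t)|\le\gl\le\gl+2\eps+1$, bounds the right-hand side by $\ga$, whence $\Pr[A\mid Z(t)]\le \ga/(1-\ga)$. If $\ga\le 1/2$ this is $\le 2\ga$; if $\ga>1/2$ the desired bound $2\ga$ is trivial.

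The only slightly delicate point is the \cadlag{} bookkeeping used to verify $|Z(\tau)-Z(t)|\ge 2\eps$ and $|Z(\tau)|\le\gl+2\eps+1$ simultaneously; once these are in hand, everything boils down to the strong Markov property and two well-chosen invocations of \eqref{lm1}, with the factor of $2$ in \eqref{lm00} coming from comparing $\ga/(1-\ga)$ against $2\ga$.
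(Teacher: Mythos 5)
Your proof is correct and is essentially the same argument as the paper's: introduce the first-exit stopping time $\tau$, use the jump bound and the definition of $\tau$ to place $|Z(\tau)|$ within the allowed window, apply \eqref{lm1} once at $(\tau, t+2\gd)$ via the strong Markov property and once directly at $(t, t+2\gd)$, and close with the triangle inequality and the comparison $\ga/(1-\ga)\le 2\ga$. The only cosmetic difference is that you define $\tau$ with strict inequality (so $\{\tau\le t+\gd\}$ equals the target event rather than merely containing it), whereas the paper uses $\ge$; this is immaterial.
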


\begin{proof}
Let $\cF_t$ be the $\gs$-field generated by \set{Z(s):s\le t}, and
define a stopping time by
\begin{equation}
  \label{tau}
\tau:=\inf\bigset{u\in[t,t+\gd]:|Z(u)-Z(t)|\ge2\eps},
\end{equation}
using the definition $\inf\emptyset:=\infty$ if there is no such $u$.

Let $v:=t+2\gd$. If $\tau<\infty$, then 
$\tau\in[t,t+\gd]$, and thus
$\tau+\gd\le v\le \tau+2\gd$;
hence,
by \eqref{lm1} and the strong Markov property
\begin{equation}
  \label{lm3}
\Pr\bigsqpar{|Z(v)-Z(\tau)|>\eps\mid\cF_{\tau}}\le\ga
\end{equation}
\as{} on the event $\set{\tau<\infty}\cap\set{|Z(\tau)|\le \gl+2\eps+1}$.
Furthermore, $\tau<\infty$ implies, by the definition \eqref{tau} and
right-continuity,
\begin{equation}
  \label{lm4}
|Z(\tau)-Z(t)|\ge2\eps, 
\end{equation}
and thus also $\tau>t$ and, by \eqref{lm1} again,
\begin{equation}
  \label{lm5}
|Z({\tau-})-Z(t)|\le2\eps.
\end{equation}

Let $\cE$ be any event with $\cE\in\cF_t$ and 
$\cE\subseteq \set{|Z(t)|\le \gl}$.
Then, on the event  $\cE\cap\set{\tau<\infty}$,
by \eqref{lm2} and \eqref{lm5},
\begin{equation}
  \label{lm6}
|Z(\tau)|
\le
|Z(t)|+
|Z(\tau-)-Z(t)|+
|Z(\tau)-Z(\tau-)|
\le \gl+ 2\eps+1.
\end{equation}
Hence, \eqref{lm3} applies, and thus, since $\cF_t\subseteq\cF_\tau$,
\begin{equation}
\Pr\bigsqpar{\set{|Z(v)-Z(\tau)|\le\eps}\cap\cE\cap\set{\tau<\infty}}
\ge(1-\ga)\Pr\bigsqpar{\cE\cap\set{\tau<\infty}}.
\end{equation}
In other words, recalling that $\cE$ can be any event in $\cF_t$
with
$\cE\subseteq \set{|Z(t)|\le \gl}$,
\begin{equation}
  \label{lm8}
\Pr\bigsqpar{\set{|Z(v)-Z(\tau)|\le\eps}\cap\set{\tau<\infty}\mid\cF_t}
\ge(1-\ga)\Pr\bigsqpar{\set{\tau<\infty}\mid\cF_t}
\end{equation}
\as{} on the event $\set{|Z(t)|\le \gl}$.

Furthermore, $\tau<\infty$ and $|Z(v)-Z(\tau)|\le\eps$  imply, using
  \eqref{lm4},
$|Z(v)-Z(t)|\ge\eps$. Consequently, \eqref{lm1} implies
(using the Markov property)
\begin{equation}
  \label{lm9}
\Pr\bigsqpar{\set{|Z(v)-Z(\tau)|\le\eps}\cap\set{\tau<\infty}\mid\cF_t}
\le
\Pr\bigsqpar{\set{|Z(v)-Z(t)|\ge\eps}\mid\cF_t}\le\ga
\end{equation}
\as{} on the event $\set{|Z(t)|\le \gl}$.

Assume first $\ga\le1/2$.
Combining \eqref{lm8} and \eqref{lm9}, we obtain
\begin{equation}
  \label{lm10}
\Pr\bigsqpar{\set{\tau<\infty}\mid\cF_t}
\le \frac{\ga}{1-\ga}
\le2\ga,
\end{equation}
\as{} on the event $\set{|Z(t)|\le \gl}$.
Since the event
$\set{\sup_{u\in[t,t+\gd]}|Z(u)-Z(t)|>2\eps}\subseteq\set{\tau<\infty}$,
the result follows.
The case $\ga>1/2$ is trivial.
\end{proof}

\begin{proof}[Proof of \refL{LT}]
\refL{LM} applies to each $Z_n$ with 
$\eps$ replaced by $\eps/2$ 
and
$\ga:=\ga_n(\gl+\eps+1,\eps/2,\gd,T+2\gd)$.
This shows that,
for each $t\le T$,
\begin{multline}\label{lm0}
  \Pr\Bigsqpar{\sup_{u\in[t,t+\gd]}|Z_n(u)-Z_n(t)|>\eps\mid Z_n(t)}
\\
\le \ga'_n(\gl,\eps,\gd,T):=2\ga_n(\gl+\eps+1,\eps/2,\gd,T+2\gd).
\end{multline}
\as{} on the event \set{|Z_n(t)|\le\gl}.
Hence, the assumption \ref{LAb} holds with $\ga_n$ replaced by $\ga_n'$; 
note that \eqref{laga} holds for $\ga_n'$ by \eqref{lt2} and \eqref{lm0}
(since it suffices to consider $\gd\le1$).

Hence,  \refL{LA} applies and the result follows.
\end{proof}

\subsection{Proof of \refT{Toi}}\label{SS:pfToi}

Note first that if we define $\Ga(t)$ by \eqref{G-Br}, then its covariance
function agrees with \eqref{toi1}; this shows that the Gaussian process
$\Ga(t)$ in \refT{Toi} really exists and is continuous on \oi.
(Also at $t=0$, since $\Br(t)$ is H\"older$(\frac12-\eps)$ for every $\eps>0$.)
Equivalently, we can define $\Ga(t)$ from a Brownian motion $B(t)$ by either
\begin{equation}\label{G-B1}
  \Ga(t):=(\ga-1/2)\qw\bigpar{t^{1-\ga}-t^{\ga}}B_{t^{2\ga-1}/(1-t^{2\ga-1})},
\qquad 0< t<1,
\end{equation}
or (reversing the flow of time)
\begin{equation}\label{G-B2}
  \Ga(t):=(\ga-1/2)\qw t^{\ga}B_{t^{1-2\ga}-1},
\qquad 0< t\le1.
\end{equation}
Again, these are verified by calculating the covariances.
Note that $\Ga(0)=\Ga(1)=0$, \eg{} by \eqref{G-Br}.

\begin{lemma}\label{Lfd}
  Finite-dimensional convergence holds in \eqref{toi2}, i.e., if $0\le
  t_1<\dots<t_m\le 1$ are fixed, then,
conditioned on $BINGO(n,n)$, as \ntoo,
  \begin{equation}\label{lfd}
        n\qqw\bigpar{ \gD_{\floor{2nt_1}},\dots,\gD_{\floor{2nt_m}}}
\dto \bigpar{\Ga(t_1),\dots,\Ga(t_m)}.
  \end{equation}
\end{lemma}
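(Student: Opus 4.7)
The plan is to compute the joint conditional distribution of $(\gD_{k_1},\ldots,\gD_{k_m})$, with $k_i:=\floor{2nt_i}$ and $A_i:=k_i/2\sim nt_i$, in closed form by Markov factorisation, and to identify its $n\to\infty$ limit as the density of $(\Ga(t_1),\ldots,\Ga(t_m))$. The cases $t_i\in\{0,1\}$ are trivial, since $\gD_0=0$ and $\gD_{2n}=0$ under $BINGO(n,n)$, matching $\Ga(0)=\Ga(1)=0$, so I restrict throughout to $0<t_1<\cdots<t_m<1$.

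First, setting $\gL_i:=\gD_{k_i}/2$, the strong Markov property of $(X_k,Y_k)$, together with the tautological disjointness of distinct states sharing a given coordinate sum, yields
\begin{align*}
 &\Pr\bigsqpar{\gD_{k_i}=2\gL_i\ \forall i,\ BINGO(n,n)}\\
 &\qquad=\Pr\bigsqpar{BINGO(1,1;\,A_1+\gL_1,A_1-\gL_1)}\\
 &\qquad\quad\times\prod_{i=1}^{m-1}\Pr\bigsqpar{BINGO(A_i+\gL_i,A_i-\gL_i;\,A_{i+1}+\gL_{i+1},A_{i+1}-\gL_{i+1})}\\
 &\qquad\quad\times\Pr\bigsqpar{BINGO(A_m+\gL_m,A_m-\gL_m;\,n,n)}.
\end{align*}
I would divide by $\Pr[BINGO(n,n)]\sim 2\beta n^{-\ga}$ from \refT{t3}, apply \refC{Cbingo2} to each of the $m-1$ transition factors (with $\theta=t_{i+1}/t_i$) and to the final factor (with $\theta=1/t_m$ and $\gG=0$), and handle the initial factor by a direct extension of the argument of \refT{t3}: when $|\gL_1|=O(\sqrt n)$, the mean shift induced in the continuous-time representation $\gD^\dagger$ is $O(\gL_1/A_1^\ga)=o(1)$ for $\ga>1$, so continuity of $f_\infty$ at $0$ gives the initial factor as $\sim 2\beta A_1^{-\ga}$ uniformly in $\gL_1$.

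After substituting $\gL_i=\tfrac12\sqrt n\,z_i$, the powers of $n$ from the $m$ numerator factors and the denominator collapse to $n^{-m/2}$, exactly compensating the unit lattice spacing of the $\gL_i$'s and converting the joint probability mass function into a joint density in $z\in\bbR^m$. The Gaussian exponents combine into $\exp(-\tfrac12 Q(z))$ for a tridiagonal positive-definite quadratic form $Q$ read off the exponents in \refC{Cbingo2}, and the prefactors assemble into an explicit normalising constant depending only on $(t_1,\ldots,t_m;\ga)$. The limit is therefore the density of some centered multivariate Gaussian $(Y_1,\ldots,Y_m)$.

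The main obstacle is showing $(Y_1,\ldots,Y_m)\eqd(\Ga(t_1),\ldots,\Ga(t_m))$, i.e.\ that $Q^{-1}$ agrees with the submatrix of the covariance \eqref{toi1}. Since the conditioned chain remains Markov, and $\Ga$ is Gauss--Markov by representation \eqref{G-Br}, both the inverse of \eqref{toi1} and $Q$ are tridiagonal; the matching therefore reduces to comparing diagonal and super-diagonal entries, or equivalently to checking agreement of the one- and two-dimensional marginal distributions. The one-dimensional check is $\Var Y_1=\frac{2}{2\ga-1}(t_1-t_1^{2\ga})$, which follows directly from the $m=1$ instance of the computation above; the pairwise check reduces to a $2\times2$ determinant identity using \eqref{toi1}, verified against the $m=2$ specialisation. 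Both are routine algebraic computations, with the only delicate bookkeeping being the factor of $\tfrac12$ arising from $\gL=\gD/2$.
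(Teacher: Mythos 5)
Your decomposition is identical to the paper's: factor the event $\{\gD_{k_i}=2\gL_i\ \forall i\}\cap BINGO(n,n)$ into a product of $m{+}1$ independent $BINGO$ probabilities using the Markov property, divide by $\Pr[BINGO(n,n)]$, and feed in \refT{t3} for the initial factor (your mean-shift remark is superfluous since \refT{t3} already gives $\Pr[BINGO(i,j)]\sim\beta(i^{-\ga}+j^{-\ga})$ uniformly along sequences with $i,j\to\infty$) and \refC{Cbingo2} for the remaining ones. Where you genuinely diverge is the identification of the limiting Gaussian. The paper writes the surviving product of one-step Gaussian kernels explicitly as the joint density of $(B(T_1),\dots,B(T_m))$ at the deterministic times $T_i=t_i^{1-2\ga}-1$ after the rescaling $y_i=\kk t_i^{-\ga}k_i/\sqrt n$, and then reads off the claim directly from the explicit construction \eqref{G-B2} of $\Ga$; no covariance matching is needed. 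You instead argue abstractly: the limiting precision matrix $Q$ is tridiagonal because the log-density is a sum of terms in consecutive pairs, $\Ga$ is Gauss--Markov (hence its finite-dimensional precision matrices are also tridiagonal), and a Gaussian Markov chain is determined by $\Var Y_1$ together with the consecutive $2\times 2$ covariance blocks, so it suffices to check the $m=1$ and $m=2$ cases. This is logically sound and has the merit of not requiring you to have already guessed the transformation \eqref{G-B2}, but it defers the actual algebraic verification of the $2\times 2$ match to "routine computation"; the paper's route gets the identification for free once the product is recognized as a Brownian transition density. Either way the bookkeeping of lattice spacing (your $z_i$ has step $2/\sqrt n$, matching the $2\kk t_i^{-\ga}/\sqrt n$ step of the paper's $y_i$) works out, as you note.
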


\begin{proof}
Since $\gD_0=\gD_{2n}=0$ and $\Ga(0)=\Ga(1)=0$ by definition, 
we may assume $0<t_1<\dotsm<t_m<1$.
We fix also some $M>0$.
Let $\nnn{i}:=\floor{2nt_i}$,
and let $k_1,\dots,k_m\in\bbZ$ with $\nnn{i}+k_i\in2\bbZ$
and $|k_i|\le M \sqrt n$.
(Assume $n$ so large that each $n_i\ge2$.)
Then $BINGO(n,n)$ holds together with $\gD(n_i)=k_i$ for $i=1,\dots,m$, 
if and only if the events $\cA_1,\dots \cA_{m+1}$ occur,
where we set 
\begin{align}\label{b1}
\mathcal{A}_1 
 &= BINGO\Bigpar{\frac{\nnn1 + k_1 }{2}, \frac{ \nnn1 - k_1 }{2}},
\intertext{and for $2\le i\le m+1$, with $n_{m+1}:=2n$ and $k_{m+1}:=0$,}
\mathcal{A}_i &= 
BINGO\Bigpar{\frac{\nnn{i-1} + k_{i-1} }{2}, \frac{\nnn{i-1} - k_{i-1} }{2};
\frac{\nnn{i} + k_{i} }{2}, \frac{\nnn{i} - k_{i} }{2}}.
\label{b2}
\end{align}
The events $\cA_1,\dots,\cA_{m+1}$ are independent, and thus
\begin{align}\label{b3}
&\P\bigsqpar{\Delta( \nnn{1}) = k_1,\dots, \Delta(\nnn{m})= k_m \mid BINGO(n,n)}
\nonumber
\\&\qquad
 = \frac{\prod_{i=1}^{m+1} \P[ \mathcal{A}_i] }{\P [ BINGO(n,n) ]}
 = \frac{\P[ \mathcal{A}_1] }{\P [ BINGO(n,n) ]} 
\prod_{i=2}^{m+1} \P[ \mathcal{A}_i] .
\end{align}
By \refT{t3}, and $|k_1|\le M\sqrt n \le M'\sqrt{n_1}$,
\begin{align}
\frac{\P[ \mathcal{A}_1] }{\P [ BINGO(n,n) ]} 
&=
\frac{\P\Bigsqpar{BINGO\bigpar{\frac{\nnn1 + k_1 }{2}, \frac{ \nnn1-k_1 }{2}}} }
{\P [ BINGO(n,n) ]} 
\nonumber\\&
\sim
\frac{\bigpar{\frac{\nnn1+k_1}{2}}^{-\ga}+\bigpar{\frac{\nnn1-k_1}{2}}^{-\ga}}
{2n^{-\ga}}
\sim
\frac{2(\nnn1/2)^{-\ga}}{2n^{-\ga}}
\sim t_1^{-\ga},
\label{b4}
\end{align}
where, as in the estimates below, the implicit factors $1+o(1)$ tend to 1
as \ntoo,
uniformly for all 
$k_1,\dots,k_m$ as above, for fixed $t_1,\dots,t_m$ and $M$.

Denote the probability density function of the normal distribution $N(0,t)$ by
\begin{equation}\label{phit}
  \phi_t(x) := (2\pi t)\qqw e^{-x^2/2t}.
\end{equation}
Furthermore, let 
\begin{align}
\kk&:=\sqrt{\ga-1/2},   
\label{kk}\\
  T_i&:=t_i^{1-2\ga}-1,  \label{Ti}
\\
y_i&:= \kk t_i^{-\ga}k_i/\sqrt n.  \label{yi}
\end{align}
Note that $n_i/2\sim t_in$, where we define
$t_{m+1}:=1$.
Thus \eqref{b2} and \refC{Cbingo2} yield,
for $2\le i\le m+1$,
\begin{align}
 \Pr[ \cA_i]
&\sim\sqrt{\frac{2\ga-1}{\pi}}
\frac{1}{n\qq t_i^{\ga}\sqrt{t_{i-1}^{1-2\ga}-t_i^{1-2\ga}}}
\exp\biggpar{-\frac{\bigpar{y_{i-1}-y_i}^2}{2(t_{i-1}^{1-2\ga}-t_i^{1-2\ga})} }
\nonumber\\
&=\frac{2\kk}{t_i^{\ga}\sqrt n}
\phi_{T_{i-1}-T_i}(y_{i-1}-y_i)
.\label{b5}
\end{align}
Consequently, \eqref{b3}, \eqref{b4} and \eqref{b5} yield,
uniformly for $|k_i|\le M\sqrt n$,
\begin{align}\label{b6}
&\P\bigsqpar{\Delta( \nnn{1}) = k_1,\dots, \Delta(\nnn{m})= k_m \mid BINGO(n,n)}
\nonumber
\\&\qquad
\sim 
t_1^{-\ga} \prod_{j=1}^m \frac{2\kk}{t_{j+1}^{\ga}\sqrt n}
\phi_{T_j-T_{j+1}}(y_j-y_{j+1})
=
 \prod_{j=1}^m \frac{2\kk}{t_{j}^{\ga}\sqrt n}
\phi_{T_j-T_{j+1}}(y_j-y_{j+1})
.
\end{align}

Note that $T_1>\dots>T_m>T_{m+1}=0$, and thus (recalling $y_{m+1}=0$)
$\prod_{i=1}^m\phi_{T_j-T_{j+1}}(y_j-y_{j+1})$ is the joint density function 
of $\bigpar{B(T_1),\dots,B(T_m)}$ for a Brownian motion $B(t)$.
Since $M$ is arbitrary, it follows easily, recalling the scaling \eqref{yi} and
noting that 
$k_i+n_i\in 2\bbZ$, so $k_i$ takes values spaced by 2, and thus $y_i$ takes
values spaced by $2\kk t_i^{-\ga}n^{-1/2}$, that
\begin{equation}
n\qqw  \bigpar{\kk t_1^{-\ga}\gDp{n_1},\dots,\kk t_m^{-\ga}\gDp{n_m}}
\dto \bigpar{B(T_1),\dots,B(T_m)}.
\end{equation}
Hence, 
\begin{equation}\label{krhmf}
  \begin{split}
    n\qqw  \bigpar{ \gDp{n_1},\dots, \gDp{n_m}}
&\dto \bigpar{\kk\qw t_1^{\ga}B(T_1),\dots,\kk\qw t_m^{\ga}B(T_m)}
  \end{split},
\end{equation}
which 
using \eqref{Ti} and the construction \eqref{G-B2} is the same as \eqref{lfd}.
\end{proof}

\begin{remark}\label{Runiform}
  We assumed in the proof above that $t_1,\dots,t_m$ are fixed.
In fact, the proof shows, using the uniformity assertion in \refT{Tbingo}, 
that the estimates, in particular \eqref{b6}, hold
uniformly for 
all $0<t_1<\dots<t_m<1$ with $\min\set{t_1,t_{i+1}-t_i, 1-t_m}\ge\delta$, for 
any fixed $M<\infty$ and  $\delta>0$.
\end{remark}

We let $Z_n$ denote the processes $Z_n(t):=n\qqw \gDnt$, 
always conditioned on $BINGO(n,n)$. 
We also let $\PrB$ denote probabilities conditional on $BINGO(n,n)$.

We proceed to tightness and functional convergence.
Our proof requires special arguments for $t$ close to the endpoints 0 and 1,
mainly because the lack of uniformity in \refT{Tbingo} when $A$ is close to
0 or $B$. We first prove that the sequence $Z_n(t)$ is stochastically
bounded on a subinterval $[a,b]\subset(0,1)$.

\begin{lemma}\label{LSB}
  Let $0<a<b<1$. Then, conditioned on $BINGO(n,n)$, 
the sequence of stochastic processes
  $Z_n(t):=n\qqw\gD_{\floor{2nt}}$ is stochastically bounded on $[a,b]$.
\end{lemma}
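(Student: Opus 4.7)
The aim is to show that for every $\varepsilon>0$ there exists $M$ with
\[\limsup_{n\to\infty}\widehat{\P}\bigsqpar{\max_{a\le t\le b}|Z_n(t)|>M}<\varepsilon,\]
which I would do in three steps.

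First, I would establish a uniform one-point Gaussian tail: for all $t\in[a,b]$ and $n$ large,
\[\widehat{\P}\bigsqpar{|Z_n(t)|>M}\le C_1 e^{-c_1 M^2},\]
with $C_1,c_1>0$ depending only on $a,b,\alpha$. Writing
\[\widehat{\P}\bigsqpar{\gD_{\floor{2nt}}=\ell}=\frac{\Pr\bigsqpar{BINGO(1,1;A+\ell/2,A-\ell/2)}\cdot \Pr\bigsqpar{BINGO(A+\ell/2,A-\ell/2;n,n)}}{\Pr\bigsqpar{BINGO(n,n)}}\]
with $A:=\floor{2nt}/2$, \refT{t3} gives the first factor $\sim 2\beta A^{-\alpha}$ uniformly for $|\ell|=o(A)$, \refC{Cbingo2} (with $\zeta=t$) gives the second $\sim CA^{\alpha-1/2}n^{-\alpha}\exp(-c(t)\ell^2/A)$ with $c(t)$ bounded below on $[a,b]$, and \eqref{bingo} gives the denominator $\sim 2\beta n^{-\alpha}$. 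Summing over $|\ell|\ge M\sqrt n$ yields the bound.

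Second, I would pass from pointwise to uniform control via a fixed grid $a=s_0<s_1<\dots<s_N=b$ of spacing $\delta$. Observe
\[\Bigset{\max_t |Z_n(t)|>2M}\subseteq \Bigset{\max_j|Z_n(s_j)|>M}\cup\bigcup_{j=1}^N E_j,\]
where $E_j:=\bigset{|Z_n(s_{j-1})|\le M,\ \sup_{t\in[s_{j-1},s_j]}|Z_n(t)|>2M}$. The first event has probability at most $(N+1)C_1 e^{-c_1 M^2}$ by Step~1 and a union bound. To bound $\widehat{\P}[E_j]$, I would condition on $\gD_{2ns_{j-1}}=\ell_0$ with $|\ell_0|\le M\sqrt n$, and let $\tau$ be the first $k\in I_j:=[2ns_{j-1},2ns_j]$ with $|\gD_k|\ge 2M\sqrt n$. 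By the strong Markov property, $E_j\cap BINGO(n,n)$ decomposes (summing over $\tau$ and $\gD_\tau$) into a first-passage segment whose unconditional probability is $\le e^{-cM^2}$ by \refL{LQ} applied with $\gL=M\sqrt n$, times a continuation segment from a high-$|\gD|$ state to $(n,n)$ of probability $O(n^{-\alpha})$ by \refC{Cbingo2} (or \refL{LD}). Summing and dividing by $\Pr[BINGO(n,n)]\sim 2\beta n^{-\alpha}$ yields $\widehat{\P}[E_j]\le C_2 e^{-c_2 M^2}$, and thus $\sum_j \widehat{\P}[E_j]\le N C_2 e^{-c_2 M^2}$. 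Taking $M$ large (with $N$ fixed) makes the total smaller than $\varepsilon$.

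The hard part is the excursion estimate for $E_j$: a naive union bound over the $\Theta(n)$ intermediate times in $I_j$ would introduce a spurious factor of $n$ that swamps the Gaussian exponential. \refL{LQ} is essential precisely because it bounds the entire union of first-reach events uniformly in the terminal endpoint, so the first-passage step contributes only $e^{-cM^2}$ regardless of where in $I_j$ the excursion occurs. Some additional care is needed near the right end of $I_j$, where the scaling in \refC{Cbingo2} degenerates as $B-A\to 0$; that regime can be absorbed into the event $\{|Z_n(s_j)|>2M-o(1)\}$ using that the jumps of $Z_n$ are bounded by $n^{-1/2}=o(1)$ and hence handled by Step~1 already.
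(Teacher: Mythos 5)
Your overall strategy (decompose $\Pr[E_j\cap BINGO(n,n)]$ via the strong Markov property at the first exit time, bound the pieces using the $BINGO$ estimates, and divide by $\Pr[BINGO(n,n)]\sim 2\beta n^{-\ga}$) is sound, and is the same mechanism the paper uses. However, you have misattributed which segment carries which factor, and in doing so misapplied \refL{LQ}.

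You claim the ``first-passage segment'' (i.e., the event that the walk, having $|\gD_{2ns_{j-1}}|\le M\sqrt n$, exceeds $2M\sqrt n$ somewhere on $[2ns_{j-1},2ns_j]$) has unconditional probability $\le e^{-cM^2}$ by \refL{LQ}. That is backwards: \refL{LQ} bounds the probability of starting at displacement $\gL$ at level $A$ and \emph{returning} close to the diagonal at level $B$; it says nothing about first passage \emph{away} from the diagonal. And this is not a cosmetic issue — in the unconditional process the rich get richer, so escaping the diagonal is \emph{not} Gaussian-unlikely. The unconditional probability of the first-passage piece is governed essentially by $\Pr[|\gD_{2ns_{j-1}}|\le M\sqrt n]$, which by \refT{t3} is of order $Mn^{1/2-\ga}$, not $e^{-cM^2}$. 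Symmetrically, you assign the continuation segment (from a state at displacement $\approx 2M\sqrt n$ to $(n,n)$) a bound $O(n^{-\ga})$; but \refC{Cbingo2} (or \refL{LD}, or \refL{LQ} applied to this segment, which \emph{is} the right direction) give $O(n^{-1/2}e^{-cM^2})$. So the Gaussian decay belongs to the continuation, not the first passage, and the polynomial factor for the first-passage piece is $n^{1/2-\ga}$, not $1$. Your two errors happen to compensate in the final exponent, but the argument as stated is not valid.

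Beyond this, two smaller points. First, the grid $s_0<\dots<s_N$ and the events $E_j$ are not needed: the paper achieves the same conclusion with a single stopping time $\tau_K=\inf\set{k\ge A_0:|\gD_k|\ge Kn\qq}$ over the whole block $[A_0,B_0]=[\floor{2na},\floor{2nb}]$, splitting into $\set{\tau_K=A_0}$ (handled by finite-dimensional convergence, \refL{Lfd}) and $\set{A_0<\tau_K\le B_0}$ (handled by the strong Markov decomposition as above). Your Step 1 is then also superfluous, since the one-point bound at $t=a$ follows from \refL{Lfd}. Second, in your Step 1 you invoke \refC{Cbingo2} for all $|\ell|\ge M\sqrt n$, but that corollary requires $\gL=O(\sqrt A)$; for larger $|\ell|$ you would need \refL{LD}. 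This is fixable, but you should note it.
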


\begin{proof}
Let $A_0:=\floor{2na}$ and $B_0:=\floor{2nb}$.
Further, let $K>0$ be a large number.
Define (for each $n$) the stopping time   
\begin{equation}\label{tauk}
  \tau_K:=\inf\set{k\ge A_0:|\gD_k| \ge Kn\qq},
\end{equation}
as always with $\inf\emptyset:=\infty$.
Then
\begin{align}\label{kaj}
&  \Pr\bigsqpar{\set{A_0<\tau_K\le B_0 }\land BINGO(n,n)}
\nonumber\\&\qquad=
\sum_{k=A_0+1}^{B_0}\P[\tau_K=k] \Pr\bigsqpar{BINGO(n,n)\mid \tau_K=k}.
\end{align}
If $\tau_K=k>A_0$, then $|\gD_k|=\ceil{Kn\qq}$ or $\ceil{Kn\qq}+1$ (depending
on the parity of $k$). Denoting this number by $\gDx$,
and assuming $A_0<k\le B_0$,
we have by \refC{Cbingo2}, 
for all $n\ge n_0$ for some $n_0$ not depending on $k$,
and some $C$ not depending on $n$, $k$ or $K$ (but perhaps on $a,b,\ga$),
\begin{align}
&\Pr\bigsqpar{BINGO(n,n)\mid \tau_K=k}
= \Pr\Bigsqpar{BINGO\Bigpar{\frac{k\pm\gDx}2,\frac{k\mp\gDx}2 ;n,n}} 
\nonumber\\
&\qquad
\le C n\qqw e^{-\gDx^2/2k}
\le C n\qqw e^{-K^2 n/2k}
\le C n\qqw e^{-K^2/4}.
\label{kai}
\end{align}
Note also that $\tau_K=k>A_0$ implies $|\gD_{A_0}|<Kn\qq$ by \eqref{tauk}.
Hence, \eqref{kaj} and \eqref{kai} yield, for $n\ge n_0$,
\begin{align}\label{kak}
&  \Pr\bigsqpar{\set{A_0<\tau_K\le B_0 }\land BINGO(n,n)}
\nonumber\\&\qquad
\le
\sum_{k=A_0+1}^{B_0}\P[\tau_K=k] Cn\qqw e^{-K^2/4}
\nonumber\\&\qquad
= Cn\qqw e^{-K^2/4} \P\bigsqpar{A_0<\tau_K\le B_0}
\nonumber\\&\qquad
\le C  n\qqw e^{-K^2/4} \P\bigsqpar{|\gD_{A_0}|<Kn\qq}.
\end{align}
Furthermore, by \refT{t3}, as \ntoo,
\begin{align}
  \P\bigsqpar{|\gD_{A_0}|<Kn\qq}
&
=\sum_{|j|<Kn\qq} \Pr\Bigsqpar{BINGO\Bigpar{\frac{A_0+j}2,\frac{A_0-j}2 }} 
\nonumber\\&
\sim 2Kn\qq\cdot2\gb (A_0/2)^{-\ga}
\sim 4\gb K a^{-\ga} n^{1/2-\ga}.
\label{kam}
\end{align}
By the same theorem,  $\Pr\bigsqpar{BINGO(n,n)}\sim 2\gb n^{-\ga}$.
Consequently, \eqref{kak} implies
\begin{align}
&\limsup_\ntoo  
\PrB\bigsqpar{\set{A_0<\tau_K\le B_0 }}
\nonumber\\&\qquad
=\limsup_\ntoo
\frac{ \Pr\bigsqpar{\set{A_0<\tau_K\le B_0 }\land BINGO(n,n)}}
{\Pr\bigsqpar{BINGO(n,n)}}
\nonumber\\&\qquad
\le\limsup_\ntoo
\frac{C n\qqw e^{-K^2/4} \cdot
4\gb K a^{-\ga} n^{1/2-\ga}}
{2\gb n^{-\ga}}
= C_1 K e^{-K^2/4}.
\end{align}

We have also, by \refL{Lfd}, as \ntoo,
\begin{align}
  \PrB\bigsqpar{\tau_K=A_0 }
&= \PrB \bigsqpar{|\gD_{A_0}|\ge K n\qq}
\to \Pr\bigsqpar{G_\ga(a)\ge K}.
\end{align}
Consequently, conditioned on $BINGO(n,n)$,
\begin{align}
\limsup_\ntoo  \PrB\bigsqpar{\sup_{a\le t\le b}|Z_n(t)|\ge K}
&=
\limsup_\ntoo  \PrB\bigsqpar{A_0\le \tau_k\le B_0 }
\nonumber\\&
\le C_1 K e^{-K^2/4} +  \Pr\bigsqpar{G_\ga(a)\ge K}.
\end{align}
The right-hand side tends to 0 as $K\to\infty$, and the result follows.
\end{proof}

We next prove that $Z_n(t)$ is (with large probability)
uniformly small for small $t$.

\begin{lemma}\label{LSB0}
For every $\eps>0$ and $\eta>0$, there exists $\gd>0$ such that, 
\begin{align}\label{lsb0}
\sup_{n\ge1}
\PrBX{\sup_{0\le t\le \gd}|Z_n(t)| >\eps}\le\eta.
\end{align}
\end{lemma}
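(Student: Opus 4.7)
Introduce the first-passage time $\tau := \inf\{k \ge 2 : |\gD_k| \ge \eps\sqrt n\}$, so that $\set{\sup_{0\le t\le\gd}|Z_n(t)|>\eps}=\{\tau\le\floor{2n\gd}\}$. The aim is to show $\sup_n\PrB[\tau\le\floor{2n\gd}]\le\eta$ for $\gd$ small enough. The strong Markov property at $\tau$, together with the fact that $|\gD_\tau|\in\{\ceil{\eps\sqrt n},\ceil{\eps\sqrt n}+1\}$, gives
\begin{equation*}
\Pr\bigsqpar{\tau=k, BINGO(n,n)} = \sum_{\pm} \Pr\bigsqpar{\tau=k,\gD_\tau=\pm v_k}\cdot\Pr\bigsqpar{BINGO\bigpar{\tfrac{k\pm v_k}2,\tfrac{k\mp v_k}2; n,n}},
\end{equation*}
where $v_k$ denotes the value of $|\gD_\tau|$ on $\{\tau=k\}$.

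For the terminal $BINGO$ probability, since $B/A=2n/k\ge 1/\gd>1$ when $\gd<1$, \refT{Tbingo} is applicable. When $k\gtrsim\eps^2n$ (so that $v_k=O(\sqrt k)$), \refC{Cbingo0} directly gives the bound $\le C(k/n)^{\ga-1/2}n^{-1/2}\exp(-c\eps^2n/k)$; when $k$ is smaller, so that the hypothesis $|\gL|=O(\sqrt A)$ fails, \refL{LQ} with $\gG=0$ furnishes $\exp(-c\eps^2n/(2k))$ without the polynomial prefactor. In either case, the exponential factor at $k=\floor{2n\gd}$ reduces to $\exp(-c\eps^2/\gd)$, which vanishes as $\gd\to 0$.

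The main difficulty is converting these pointwise estimates into a supremum bound uniformly in $n$. A naïve union bound, combining the estimate $\Pr[\tau=k]\le\Pr[|\gD_k|=v_k]\le Ck^{-\ga}$ (from \refT{t3}, when $v_k\ll k$) with the terminal bound, produces a residual factor that grows with $n$ after dividing by $\Pr[BINGO(n,n)]\asymp n^{-\ga}$. To circumvent this I would exploit the mean-reverting structure of $(\gD_k)$ under $\PrB$. A direct computation via the Doob $h$-transform, using the asymptotics of \refC{Cbingo0}, shows that the conditional drift at state $v$ is of order $-v/k$ toward $0$ for $|v|\ll k$; this is consistent with the Ornstein--Uhlenbeck limit appearing in \refT{thm:oulimit}. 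Thus there is an approximate $\PrB$-martingale $M_k:=c_k\gD_k$ with weights $c_k\asymp k^{\ga-1}$ chosen so that $c_{k+1}/c_k$ cancels the drift, with increments bounded by $O(k^{\ga-1})$ and quadratic variation $\langle M\rangle_k\sim k^{2\ga-1}/(2\ga-1)$.

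An Azuma--Hoeffding type maximal inequality applied to $M_k$, combined with the monotonicity of $c_k$ to translate $|\gD_k|=|M_k|/c_k$ across varying $k$, then yields the $n$-uniform sub-Gaussian bound
\begin{equation*}
\PrB\biggsqpar{\sup_{k\le\floor{2n\gd}}|\gD_k|\ge\eps\sqrt n}\le C\exp(-c\eps^2/\gd),
\end{equation*}
which is less than $\eta$ for $\gd$ small enough. The principal obstacle is the rigorous verification of the approximate-martingale identity for $M_k$: this requires controlling first-order error terms in the $h$-transform drift beyond the leading-order asymptotic supplied by \refC{Cbingo0}, together with the deployment of a maximal inequality compatible with the resulting \emph{approximate} (rather than exact) martingale structure. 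A more computational alternative uses a dyadic decomposition of the range $k\in[\eps\sqrt n,2n\gd]$ combined with sub-Gaussian concentration on each dyadic level, which is essentially equivalent but notationally heavier.
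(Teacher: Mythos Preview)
Your setup via the stopping time $\tau$ and the strong Markov decomposition is correct, and you correctly diagnose that the naive union bound leaves a residual $\sqrt n$ factor. However, your proposed fix via an approximate $\PrB$-martingale $M_k=c_k\gD_k$ is not what the paper does, and you yourself flag that the rigorous verification of the $h$-transform drift to sufficient accuracy is left open. The paper avoids this entirely.

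The paper's argument is in fact the dyadic route you mention in your last sentence, but with two ingredients you do not identify. First, rather than going directly from $(\tfrac{k\pm v_k}2,\tfrac{k\mp v_k}2)$ to $(n,n)$, the paper inserts an intermediate stop at time $n$ (i.e., at the diagonal point $n/2+j/2,\,n/2-j/2$), and works on the event $\{|\gD_n|\le M\sqrt n\}$ for a large fixed $M$. On the first leg, \refL{LQ} (summed over $|j|\le M\sqrt n$) gives the exponential $e^{-c\eps^2 n/k}$; on the second leg, \refC{Cbingo2} gives the clean factor $Cn^{-1/2}$. Second, and crucially, on each dyadic block $(A,2A]$ the paper bounds $\Pr[A<\tau\le 2A]$ not by $\sum_k\Pr[|\gD_k|=v_k]$ but by $\Pr[|\gD_A|<\eps\sqrt n]\le C\eps\sqrt n\,A^{-\ga}$ via \refT{t3}; this replaces a sum of $\Theta(A)$ terms each of size $A^{-\ga}$ by a single term of size $\eps\sqrt n\,A^{-\ga}$, and it is exactly this gain that makes the geometric sum $\sum_j A_j/n\le C\gd$ close. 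Your martingale machinery is therefore unnecessary: the ``notationally heavier'' alternative you set aside is both lighter and complete once these two observations are in place.
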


\begin{proof}
The argument is similar to the proof of \refL{LSB}, but with some differences.
Define (for each $n$) the stopping time   
\begin{equation}\label{taue}
  \taue:=\inf\set{k\ge 0:|\gD_k| \ge \eps n\qq}.
\end{equation}
Let $A_0:=\ceil{\eps n\qq}$ and
fix also some $M\ge \eps$.
Note that if $k\le A_0$, then $|\gD_k|\le k-2<\eps n\qq$.
Thus, $\taue>A_0$.

Suppose that 
\begin{align}
  \label{Ac}
A_0\le A\le \frac{\eps}{16M}n.
\end{align}
Then,
with $\gDx$ as in the proof of \refL{LSB} (with $\eps$ instead of $K$),
\begin{align}
&  \Pr\bigsqpar{\set{A<\taue\le 2A }\land\set{|\gD_n|\le Mn\qq}\land 
 BINGO(n,n)}
\nonumber\\&\qquad=
\sum_{k=A+1}^{2A}\sum_{|j|\le Mn\qq}\P[\taue=k]\times
\nonumber\\ &\hskip4em
\Pr\Bigsqpar{BINGO\Bigpar{\frac{k\pm\gDx}2,\frac{k\mp\gDx}2
    ;\frac{n+j}2,\frac{n-j}2}} \times
\nonumber\\ &\hskip6em
\Pr\Bigsqpar{BINGO\Bigpar{\frac{n+j}2,\frac{n-j}2;n,n}}.
\label{ll1}
\end{align}
Let $c$ and $C$ denote positive constants, 
not depending on $n$, $A$, $\eps$ or $M$, but
possibly varying from one occurence to another.
\refL{LQ} applies by \eqref{Ac}, 
provided $n\ge n_1(M)$ for some $n_1(M)$ depending on $M$ only,
and yields, for every $k$ in the sum in
\eqref{ll1},
\begin{align}
\sum_{|j|\le Mn\qq}
&  \Pr\Bigsqpar{BINGO\Bigpar{\frac{k+\gDx}2,\frac{k-\gDx}2
    ;\frac{n+j}2,\frac{n-j}2}} 
\le e^{-c \eps^2 n /2k}
\nonumber\\&\hskip2em
\le e^{-c \eps^2 n /A}.  
\end{align}
By symmetry the same holds with $\gDx$ replaced by $-\gDx$.
Similarly, \refC{Cbingo2} yields,
provided $n\ge n_2(M)$, 
\begin{align}
\Pr\Bigsqpar{BINGO\Bigpar{\frac{n+j}2,\frac{n-j}2;n,n}}
\le C n\qqw.
\end{align}
Hence, \eqref{ll1} implies, assuming from now on that 
$n\ge n_1(M)\lor n_2(M)$, 
\begin{align}
&  \Pr\bigsqpar{\set{A<\taue\le 2A }\land\set{|\gD_n|\le Mn\qq}\land 
 BINGO(n,n)}
\nonumber\\&\qquad
\le C\Pr\bigsqpar{A<\taue\le 2A} e^{-c\eps^2 n/A} n\qqw. 
\label{ll2}
\end{align}

If $A\ge 2A_0$, we
use \refT{t3} similarly to \eqref{kam} and obtain
\begin{align}\label{ll3}
  \Pr\bigsqpar{A<\taue\le 2A}
\le \Pr\bigsqpar{|\gD_A|< \eps n\qq}
\le C \eps n\qq A^{-\ga}.
\end{align}
Combining \eqref{ll2} and \eqref{ll3},
and recalling \eqref{bingo}, 
we then obtain
\begin{align}
  \PrB\bigsqpar{\set{A<\taue\le 2A }\land\set{|\gD_n|\le Mn\qq}}
&\le C \eps e^{-c\eps^2 n/A} A^{-\ga} n^{\ga} 
\nonumber\\&
\le C \eps^{1-2(\ga+1)} A/n.
\label{ll4}
\end{align}
If $A_0\le A<2A_0$, we instead use $  \Pr\bigsqpar{A<\taue\le 2A}\le1$ and
obtain from \eqref{ll2} similarly
\begin{align}
&  \PrB\bigsqpar{\set{A<\taue\le 2A }\land\set{|\gD_n|\le Mn\qq}}
\le C  e^{-c\eps^2 n/A} n\qqw n^{\ga} 
\nonumber\\&\qquad
\le C  e^{-c\eps n\qq}  n^{\ga-1/2}
\le C \eps^{-2\ga-1} A_0/n,
\label{ll4a}
\end{align}
yielding the same conclusion as \eqref{ll4}.

Assume $0<\gd\le \eps/32 M$ and sum \eqref{ll4} or \eqref{ll4a}
with $A=A_j:=2^jA_0$,
for $0\le j\le j_0:= \floor{\log_2(2\gd n/A_0)}$;
note that $\gd n < A_{j_0}\le 2\gd n$, and that \eqref{Ac} holds for each $A_j$.
This yields, 
with $C(\eps)$ denoting constants that may depend on $\eps$,
\begin{align}
  \PrB\bigsqpar{\set{A_0<\taue\le 2\gd n }\land\set{|\gD_n|\le Mn\qq}}
&\le C(\eps) 
\sum_{j=0}^{j_0} \frac{A_{j}}n
\le C(\eps) 
\gd
\label{ll5}
\end{align}
and thus, recalling that $\taue>A_0$,
\begin{align}
  \PrB\bigsqpar{\max\set{|\gD_k|:k\le 2\gd n }\ge\eps n\qq}
&=   \PrB\bigsqpar{{A_0<\taue\le 2\gd n }}
\nonumber\\&
\le C(\eps)\gd  
+   \PrB\bigsqpar{|\gD_n|> Mn\qq}. 
\label{ll6}
\end{align}
The right-hand side can be made smaller than $\eta$, uniformly in $n$, by
choosing $M$ large and $\gd$ small.
We assumed above that 
$n\ge n_1(M)\lor n_2(M)$. 
The result extends trivially to all $n$ as
stated in \eqref{lsb0} by decreasing $\gd$.
\end{proof}

\begin{lemma}\label{LSB1}
For every $\eps>0$ and $\eta>0$, there exists $\gd>0$ such that, 
\begin{align}\label{lsb1}
\limsup_\ntoo
\PrBX{\sup_{1-\gd\le t\le 1}|Z_n(t)| >\eps}\le\eta.
\end{align}
\end{lemma}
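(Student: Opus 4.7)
My plan is to mirror the proof of \refL{LSB0}, interchanging the roles of initial and terminal times, using the fact that under $\PrB$ the value $\gD_T$ for $T := \floor{(1-\gd)2n}$ is approximately $N(0,2\gd n)$-distributed, which is much smaller than $\eps\sqrt n$ when $\gd\ll\eps^2$. Set $A_0 := \ceil{\eps\sqrt n}$ and note that under $BINGO(n,n)$ we have $|\gD_k|\le 2n-k<\eps\sqrt n$ for $k>2n-A_0$ automatically, so the event in question is contained in $\set{\tau\le 2n-A_0}$, where $\tau := \inf\set{k\ge T : |\gD_k|\ge\eps\sqrt n}$. Choosing $J_0 := \eps\sqrt n/2$, I would split
\begin{equation*}
\PrBX{\tau \le 2n-A_0} \le \PrBX{|\gD_T| \ge J_0} + \sup_{|j|<J_0}\PrBX{\tau \le 2n-A_0\mid\gD_T = j}
\end{equation*}
and bound each contribution separately.

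For the first term, I would use \refT{t3} (to get $\Pr[BINGO(1,1;(T+j)/2,(T-j)/2)]\sim 2\beta(T/2)^{-\ga}$ uniformly for $|j|=O(\sqrt n)$) together with \refC{Cbingo2} (applied with $A=T/2$, $B=n$, $\cg=1/(1-\gd)$, using $1-(1-\gd)^{2\ga-1}\sim(2\ga-1)\gd$ for small $\gd$) to deduce that $\PrBX{\gD_T=j}\sim(4\pi\gd n)\qqw\exp(-j^2/(4\gd n))$, i.e.\ $\gD_T/\sqrt n$ is approximately $N(0,2\gd)$. A standard Gaussian tail bound then gives $\PrBX{|\gD_T|\ge J_0}\le C\exp(-\eps^2/(16\gd))$.

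For the conditional probability, the strong Markov property at $\tau$ writes the numerator as
\begin{equation*}
\Pr[\tau\le 2n-A_0,BINGO\mid\gD_T=j] = \sum_{k,\pm}\Pr[\tau=k,\gD_\tau=\pm\gDx\mid\gD_T=j]\cdot\Pr[BINGO((k\pm\gDx)/2,(k\mp\gDx)/2;n,n)],
\end{equation*}
with $\gDx\in\set{A_0,A_0+1}$ depending on parity. By \refL{LD}, the final $BINGO$-factor is at most $C(n-k/2)\qqw\exp(-\gDx^2/(4(n-k/2)))$; since the function $a\mapsto a\qqw\exp(-\gDx^2/(4a))$ is increasing on $(0,\eps^2 n/2]$ and hence on $[A_0/2,\gd n]$ (for $\gd\le\eps^2/2$), it is uniformly bounded by its value at $a=\gd n$, namely $C(\gd n)\qqw\exp(-\eps^2/(4\gd))$. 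Combined with the trivial bound $\sum_{k,\pm}\Pr[\tau=k,\gD_\tau=\pm\gDx\mid\gD_T=j]\le 1$, the numerator is $\le 2C(\gd n)\qqw\exp(-\eps^2/(4\gd))$. Since \refC{Cbingo2} also gives the denominator $\Pr[BINGO\mid\gD_T=j]\ge c(\gd n)\qqw\exp(-\eps^2/(16\gd))$ uniformly for $|j|<J_0$, dividing yields $\PrBX{\tau\le 2n-A_0\mid\gD_T=j}\le C'\exp(-3\eps^2/(16\gd))$. Combining both contributions gives $\PrBX{\tau\le 2n-A_0}\le C''\exp(-\eps^2/(16\gd))$, which is $\le\eta$ once $\gd$ is sufficiently small.

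The main difficulty will be that the leading constant of \refC{Cbingo2} contains $\sqrt{1-\cg^{1-2\ga}}\qw \sim ((2\ga-1)\gd)\qqw$, which diverges as $\cg=1/(1-\gd)\to 1$; this is harmless because $\gd$ is kept fixed while $n\to\infty$ in the limsup, and the $(\gd n)\qqw$ prefactor appears identically in the \refL{LD} numerator bound and the \refC{Cbingo2} denominator, cancelling in the ratio. The whole estimate then depends on $\gd$ only through the competition between the two Gaussian tails $\exp(-\eps^2/(4\gd))$ from \refL{LD} and $\exp(-\eps^2/(16\gd))$ from \refC{Cbingo2}, both vanishing as $\gd\to 0$.
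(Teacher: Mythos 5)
Your proof is correct and follows essentially the same route as the paper's: both introduce the stopping time from $T=\floor{(1-\gd)2n}$ to first exit of $[-\eps\sqrt n,\eps\sqrt n]$, bound the escape contribution via \refL{LD} (exploiting that $a\mapsto a^{-1/2}e^{-\gDx^2/4a}$ is maximized at the $T$-end of the window when $\gd\le\eps^2/2$), and control the starting state $\gD_T$ by its approximate Gaussian $N(0,2\gd n)$ law. The bookkeeping differs in one respect: you explicitly condition on $\gD_T=j$ and need the \refC{Cbingo2} lower bound for the conditional denominator $\Pr[BINGO\mid\gD_T=j]$, checking (correctly) that the divergent $(1-\cg^{1-2\ga})^{-1/2}$ prefactor matches the $(\gd n)^{-1/2}$ from \refL{LD} and cancels in the ratio. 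The paper avoids this extra step: it bounds the joint probability $\Pr\bigsqpar{\{A_0<\taue<2n\}\land BINGO}$ directly, uses the crude bound $\Pr[\taue>A_0]\le\Pr\bigsqpar{|\gD_{A_0}|<\eps\sqrt n}=O(\eps n^{1/2-\ga})$ (from \refT{t3} alone), and divides by $\Pr[BINGO(n,n)]\sim 2\gb n^{-\ga}$, so no conditional lower bound is needed; it then handles the remaining event $\taue=A_0$ via \refL{Lfd}. Your slightly finer decomposition (threshold $J_0=\eps\sqrt n/2$ for the Gaussian tail, rather than the full $\eps\sqrt n$) just shifts constants in the exponent — both arguments give a bound of the form $C(\eps)e^{-c\eps^2/\gd}+\Pr\bigsqpar{|\Ga(1-\gd)|\ge c'\eps}$, each vanishing as $\gd\downarrow0$. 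One small inaccuracy: in the density formula $\PrB[\gD_T=j]\approx (4\pi\gd n)^{-1/2}e^{-j^2/4\gd n}$ you omitted the factor $2$ coming from $\gD_T$ taking values in a lattice of spacing $2$, but this affects only constants, not the $\gd$-dependence of the tail.
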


Unlike \refL{LSB0}, we can not here replace $\limsup_n$ by $\sup_n$,
since trivially $\sup_{t\ge 1-\gd}|Z_n(t)|\ge n\qqw$ for every $n$ and $\gd$.

\begin{proof}
Assume $0<\gd<\frac12$,
let $A_0:=\floor{(1-\gd)2 n}$ and
define (for each $n$) the stopping time   
\begin{equation}\label{taue1}
  \taue:=\inf\set{k\ge A_0:|\gD_k| \ge \eps n\qq}.
\end{equation}
Let $\gDx$, $C$ and $C(\eps)$ be as in the proof of \refL{LSB0}.
Then,
using \refL{LD},
\begin{align}
&  \Pr\bigsqpar{\set{A_0<\taue< 2n }\land BINGO(n,n)}
\nonumber\\&\qquad
=\sum_{k=A_0+1}^{2n-1}\P[\taue=k]
\Pr\Bigsqpar{BINGO\Bigpar{\frac{k\pm\gDx}2,\frac{k\mp\gDx}2
    ;n,n}}
\nonumber\\&\qquad
\le\sum_{k=A_0+1}^{2n-1}\P[\taue=k]
\frac{C}{\gDx}e^{-\eps^2 n/4(2n-k)}
\nonumber\\&\qquad
\le C(\eps)n\qqw e^{-\eps^2/8\gd}\P[\taue>A_0].
\label{ll11}
\end{align}
Furthermore, 
similarly to \eqref{kam},
\refT{t3} yields
\begin{align}\label{ll13}
  \Pr\bigsqpar{\taue >A_0}
= \Pr\bigsqpar{|\gD_{A_0}|< \eps n\qq}
\le C \eps n\qq A_0^{-\ga}
\le C \eps n^{1/2-\ga}. 
\end{align}
Combining \eqref{ll11} and \eqref{ll13} and recalling \eqref{bingo}, we obtain
\begin{align}
  \PrB\bigsqpar{A_0<\taue< 2n }
\le C(\eps) e^{-\eps^2/8\gd}.
\label{ll1q}
\end{align}
Moreover, as \ntoo, by \refL{Lfd},
\begin{align}\label{ll1w}
 \PrB\bigsqpar{ \taue=A_0}
=
\PrB\bigsqpar{|\gD_{A_0}|\ge \eps n\qq}
\to \Pr\bigsqpar{|G_{1-\gd}|\ge\eps}
\end{align}

Hence, combining \eqref{ll1q} and \eqref{ll1w},
\begin{align}
\limsup_\ntoo
  \PrB\Bigsqpar{\sup_{t\ge{1-\gd}}|Z_n(t)|\ge\eps}
&=  \limsup_\ntoo\PrB\bigsqpar{\set{A_0 \le \taue< 2n }}
\nonumber\\&
\le C(\eps) e^{-\eps^2/8\gd} + \Pr\bigsqpar{|G_{1-\gd}|\ge\eps}.
\end{align}
The right-hand side can be made less than $\eta$ by choosing $\gd$
small, which yields \eqref{lsb1}.
\end{proof}

Next, consider the processes only on an interval $[a,1)$, for some fixed
$a\in(0,1)$. 

\begin{lemma}\label{La1}
Let\/ $0<a<1$.
Then $Z_n(t)\dto \Ga(t)$ in $\cD[a,1)$ as \ntoo.
\end{lemma}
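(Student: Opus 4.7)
My plan is to deduce convergence in $\cD[a,1)$ from two ingredients: the finite-dimensional convergence already established in \refL{Lfd}, and tightness of $\{Z_n\}$ in $\cD[a,b]$ for every $b\in(a,1)$. Since a sequence converges in $\cD[a,1)$ iff it converges in $\cD[a,b]$ for each $b<1$, these two ingredients together yield the lemma.

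For tightness on $[a,b]$, I would apply the criterion \refL{LT} with $T=b$. Condition \ref{LTa} is \refL{LSB}, and \ref{LTc} is immediate since $\gD_k$ has unit jumps, so the jumps of $Z_n(t)=n\qqw\gD_{\floor{2nt}}$ are bounded by $n\qqw\le1$. The real work is verifying the conditional estimate \eqref{lt1}--\eqref{lt2}. To that end, fix $\gl<\infty$ and $\eps>0$, and consider $0\le t\le u\le b$ with $\gd\le u-t\le 2\gd$ for $\gd\le\gd_0$ small enough that $b+2\gd_0<1$. Put $m:=\floor{2nt}$ and $m':=\floor{2nu}$. Since the conditioned chain $(\gD_k)$ is (strong) Markov, Bayes's rule gives, for $k,k'$ of the correct parities,
\begin{equation*}
\PrB[\gD_{m'}=k'\mid\gD_m=k]
=\frac{p_1(k,k')\,p_2(k')}{p_3(k)},
\end{equation*}
where $p_1,p_2,p_3$ are the three BINGO probabilities between $(\tfrac{m+k}2,\tfrac{m-k}2)$, $(\tfrac{m'+k'}2,\tfrac{m'-k'}2)$, and $(n,n)$, taken in the obvious order.

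For $\gd\le\gd_0$ both ratios $m'/m$ and $2n/m'$ are bounded away from $1$ uniformly, so \refT{Tbingo} applies to each of $p_1,p_2,p_3$. Plugging in the Gaussian densities and using the parameterization of \refR{Rgauss}, the ratio on the right becomes, uniformly in $k,k'$ with $|k|\le\gl\sqrt n$ and $|k'|\le (\gl+1)\sqrt n$, a $(1+o(1))$-times Gaussian density in $k'$. A short algebraic manipulation of the variances $\gss_1,\gss_2,\gss_3$ from \refR{Rgauss} shows that the conditional variance of $\gD_{m'}$ given $\gD_m=k$ and $BINGO(n,n)$ equals $\bigl(\gss_1^{-1}+\gss_2^{-1}-\gss_3^{-1}\bigr)^{-1}$, which after substituting $A\sim nt$, $B\sim nu$ and using $1-(A/B)^{2\ga-1}\sim (2\ga-1)(u-t)/t$ reduces to $O(n(u-t))=O(n\gd)$; equivalently, $\Var(Z_n(u)-Z_n(t)\mid Z_n(t))=O(\gd)$ uniformly on $\{|Z_n(t)|\le\gl\}$. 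Chebyshev's inequality then yields
\begin{equation*}
\PrB[|Z_n(u)-Z_n(t)|>\eps\mid Z_n(t)]\le C\gd/\eps^2=:\ga_n(\gl,\eps,\gd,T),
\end{equation*}
and since $\ga_n\to0$ as $\gd\to0$, \refL{LT} delivers the required tightness.

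The main obstacle is the algebraic step in which the three Gaussian factors from \refR{Rgauss} are combined: one must verify that the prefactors $1/\sqrt{1-(A/B)^{2\ga-1}}$, which individually blow up as $B/A\to1$, cancel precisely so that the net conditional variance scales linearly in $u-t$. This is structurally dictated by the fact that the limiting Gaussian process $\Ga$ has continuous sample paths with $\Var(\Ga(u)-\Ga(t)\mid\Ga(t))=O(u-t)$ as $u-t\to0$, and in practice I would carry out the calculation by applying the Gaussian product/quotient identity to the three densities rather than tracking each factor separately.
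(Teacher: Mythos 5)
Your approach is in substance the same as the paper's: apply the Markov-process tightness criterion \refL{LT}, factor the conditional transition probability via Bayes/Markov into a ratio of three $BINGO$ probabilities, and feed in the local CLT estimates of \refT{Tbingo}. Two steps as you have written them do not, however, go through.

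First, the precision formula $\bigl(\gss_1^{-1}+\gss_2^{-1}-\gss_3^{-1}\bigr)^{-1}$ is incorrect. In the product $p_1(k,k')\,p_2(k')/p_3(k)$, the denominator $p_3(k)$ does not depend on $k'$ at all, so $\gss_3$ cannot enter the conditional precision of $k'$. Moreover, by \refR{Rgauss}, $p_2(k')$ is a Gaussian density evaluated at $\gG=0$ with mean $(B_2/A_2)^\ga(k'/2)$ and variance $\gss_2$; viewed as a function of $k'$ its precision is therefore $(B_2/A_2)^{2\ga}\gss_2^{-1}$, not $\gss_2^{-1}$. The correct conditional precision of $k'/2$ is $\gss_1^{-1}+(B_2/A_2)^{2\ga}\gss_2^{-1}$, which after the paper's rescaling $T_i:=t_i^{1-2\ga}-1$ is exactly the Brownian-bridge identity
\begin{equation*}
\frac{\phi_{T_1-T_2}(x-y)\,\phi_{T_2}(y)}{\phi_{T_1}(x)}
=\phi_{(T_1-T_2)T_2/T_1}\!\Bigl(y-\tfrac{T_2}{T_1}x\Bigr),
\end{equation*}
used in \eqref{tall}, giving variance $(T_1-T_2)T_2/T_1$. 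Your order-of-magnitude conclusion $O(n\gd)$ survives because $\gss_1^{-1}=\Theta\bigl((n\gd)^{-1}\bigr)$ dominates both the missing $(B_2/A_2)^{2\ga}$ factor and the spurious $\gss_3^{-1}=\Theta(n^{-1})$ term, but the formula itself is wrong and the cancellation you invoke is not the one actually occurring.

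Second, Chebyshev controls $\Pr\bigl[|Z_n(u)-\E[Z_n(u)\mid Z_n(t)]|>\eps\mid Z_n(t)\bigr]$, whereas \eqref{lt1} requires $\Pr\bigl[|Z_n(u)-Z_n(t)|>\eps\mid Z_n(t)\bigr]$; you must additionally check that the conditional mean shift $\bigl|\E[Z_n(u)\mid Z_n(t)]-Z_n(t)\bigr|$ is uniformly $O(\gd)$ on $\{|Z_n(t)|\le\gl\}$. This is true (the bridge mean $(T_2/T_1)x$ differs from $(t/u)^\ga x$ by $O(\gd)$ after rescaling) but cannot be omitted. Also, \refT{Tbingo} gives a $\bigl(1+o_A(1)\bigr)$ multiple of a Gaussian density, not exact Gaussianity, so extracting an exact conditional variance for Chebyshev requires another (small) argument. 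The paper sidesteps both issues by estimating the conditional tail probability directly: it sums the asymptotic density over $j$ in \eqref{gran}--\eqref{grannas} to get a Gaussian tail probability whose mean and variance manifestly degenerate as $\gd\to0$. Your route is fixable, but the paper's tail-probability computation is cleaner precisely because it avoids having to justify convergence of conditional moments.
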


\begin{proof}
The space $\cD[a,1)$ of functions, equipped with the Skorohod topology,
is homeomorphic to the space $\Doo$ by a
change of variable. (Any continuous increasing bijection $[a,1)\to[0,\infty)$  
will do; we pick one, for example $t\mapsto(t-a)/(1-t)$.)
It follows that \refL{LT} applies to $\cD[a,1)$ as well, considering only
$T<1$ in \ref{LTa}, \eqref{SB} and \ref{LTb},
and  $t\ge a$  in \ref{LTb}.

The stochastic boundedness on $[a,1)$ is given by \refL{LSB}.
The condition \eqref{lt3} is satisfied, because trivially each jump in
$Z_n(t)$ is $n\qqw$.

It remains to verify \ref{LTb} in \refL{LT}. Let $\ga_n(\gl,\eps,\gd,T)$ be
the smallest number such that \eqref{lt1} holds, i.e., the supremum of the
left-hand side over all $t,u$ and $Z_n(t)$ satisfying the conditions.

Let $n_1:=\floor{2nt}$, $n_2:=\floor{2nu}$ and let $k$ be an integer with
$|k|\le\gl n\qq$.
Then
\begin{align}
&  \PrBx{|Z_n(u)-Z_n(t)|\le\eps\mid Z_n(t)=kn\qqw}
\nonumber\\&\qquad
=  \PrBx{|\gDp{n_2}-\gDp{n_1}|\le\eps n\qq\mid \gDp{n_1}=k}
\nonumber\\&\qquad
=\sum_{|j-k|\le \eps n\qq} \frac{\Prz{\cA_1(j)}\Prz{\cA_2(j)}}{\Prz{\cA_0}},
\label{gran}
\end{align}
where we define the events
\begin{align}
  \cA_1(j)&:=BINGO\Bigpar{\frac{n_1+k}2,\frac{n_1-k}2;\frac{n_2+j}2,\frac{n_2-j}2},
\\
\cA_2(j)&:=BINGO\Bigpar{\frac{n_2+j}2,\frac{n_2-j}2;n,n},
\\
\cA_0&:=BINGO\Bigpar{\frac{n_1+k}2,\frac{n_1-k}2;n,n}.
\end{align}
Let $\phi_t(x)$ and $\kk$ be 
as in \eqref{phit} and \eqref{kk}. Let further, cf.\ \eqref{Ti}--\eqref{yi},
$T_1:=t^{1-2\ga}-1$, $T_2:=u^{1-2\ga}-1$,
$x:=\kk t^{-\ga} k/\sqrt n$ and $y:=\kk u^{-\ga} j/\sqrt n$.
By calculations as in the proof of \refL{Lfd}, see \eqref{b5}, we obtain
\begin{align}\label{tall}
  \frac{\Prz{\cA_1(j)}\Prz{\cA_2(j)}}{\Prz{\cA_0}}
&
\sim \frac{2\kk}{u^\ga\sqrt n}\frac{\phi_{T_1-T_2}(x-y)\phi_{T_2}(y)}{\phi_{T_1}(x)} 
\nonumber\\&
=\frac{2\kk}{u^\ga\sqrt n}
\phi_{{(T_1-T_2)T_2}/{T_1}}\Bigpar{y-\frac{T_2}{T_1}x},
\end{align}
uniformly in all $t, u, k, j$, satisfying the conditions above (cf.\
\refR{Runiform}). 
Using \eqref{gran} and
summing \eqref{tall} 
over all $j$ with $|j-k|\le \eps n\qq$ and $j\equiv n_2\pmod 2$, so 
$y$ takes values with step $2\kk u^{-\ga}/\sqrt n$, we obtain,
with $Z\sim N(0,1)$ a standard normal variable, 
\begin{align}
&  \PrBx{|Z_n(u)-Z_n(t)|\le\eps\mid Z_n(t)=kn\qqw}
\nonumber\\&\qquad
\sim \int_{|u^\ga y -t^{\ga} x|\le\kk \eps}
\phi_{{(T_1-T_2)T_2}/{T_1}}\Bigpar{y-\frac{T_2}{T_1}x}\dd y
\nonumber\\&\qquad
=\PrX{ \Bigabs{\Bigpar{\frac{(T_1-T_2)T_2}{T_1}}\qq Z +\frac{T_2}{T_1}x
  -\frac{t^\ga}{u^\ga}x} 
\le \kk\eps u^{-\ga}},
\label{grann}
\end{align}
uniformly in $t$ and $u$ satisfying the conditions.
Hence, taking complements and recalling the definition of $\ga_n$,
\begin{multline}
\limsup_\ntoo \ga_n(\gl,\eps,\gd,T)
\\
=\sup
\PrX{ \Bigabs{\Bigpar{\frac{(T_1-T_2)T_2}{T_1}}\qq Z +\frac{T_2}{T_1}x
  -\frac{t^\ga}{u^\ga}x} 
> \kk\eps u^{-\ga}},
\label{grannas}
\end{multline}
taking the supremum over $t,u,x$ with
$a\le t\le u\le T$, $t+\gd\le u\le t+2\gd$ and $|x|\le \kk t^{-\ga}\gl\le
\kk a^{-\ga}\gl$.
For fixed $\eps,\gl>0$ and $T<1$, if $\gd\to0$, then $t/u\to 1$, $T_2/T_1\to
1$ and $T_1-T_2\to 0$, uniformly for all $t$ and $u$ satisfying these
conditions.
It follows from \eqref{grannas} that
$\limsup_\ntoo \ga_n(\gl,\eps,\gd,T)\to 0$ as $\gd\to0$, which verifies
\ref{LTb} in \refL{LT}.

We have verified the conditions in \refL{LT}, and the lemma thus shows that
the sequence $Z_n(t)$ is tight in $\cD[a,1)$.
Combined with the finite-dimensional convergence in \refL{Lfd}, this shows 
convergence to $G_\ga(t)$ in $\cD[a,1)$.
\end{proof}

\begin{proof}[Proof of \refT{Toi}]
\refL{La1} shows $Z_n(t)\dto \Ga(t)$ in $\cD[a,1)$ for every $a\in(0,1)$.
This can equivalently be expressed as convergence in $\cD(0,1)$, considering
the open interval $(0,1)$, and this can be improved to convergence in $D[0,1]$
using Lemmas \ref{LSB0} and \ref{LSB1}, see e.g.\ \cite[Proposition 2.4]{SJI}.

A direct proof can be made as follows.
Let $\eps,\eta>0$ be given.
Find $\gd>0$ such that \eqref{lsb0} and \eqref{lsb1} hold, and furthermore
\begin{align}
\PrX{\sup_{0\le t\le \gd}|\Ga(t)| >\eps}\le\eta,
&&&
\PrX{\sup_{1-\gd\le t\le 1}|\Ga(t)| >\eps}\le\eta.
\end{align}
By \refL{La1} with $a=\gd$, $Z_n(t)\dto \Ga(t)$ in $\cD[\gd,1)$, and thus in
$\cD[\gd,1-\gd]$.
By the Skorohod coupling theorem
\cite[Theorem 4.30]{Kallenberg},
we may assume that $Z_n(t)\to \Ga(t)$ uniformly on  $[\gd,1-\gd]$.
Then, now writing $\Pr$ instead of $\PrB$,
\begin{align}
&\limsup_\ntoo \PrX{\sup_{0\le t\le 1}|Z_n(t)-\Ga(t)|>2\eps}
\nonumber\\&\qquad
\le 4\eta + 
 \limsup_\ntoo \PrX{\sup_{\gd\le t\le 1-\gd}|Z_n(t)-\Ga(t)|>2\eps}
= 4 \eta.
\end{align}
Since $\eta$ and $\eps$ are arbitrary, this shows
$Z_n(t)\to\Ga(t)$ in $\cD\oi$ in probability, and thus in distribution.
\end{proof}

\subsection{The initial part of the process}\label{SS:on}

\begin{proof}[Proof of \refT{Too}]
The proof is very similar to the proof of \refT{Toi}, and we only point out
the differences. 
For convenience, we change the notation by replacing $n$ by $N$ and $m_n$ by
$2n$. Note that the normalization then is by $(2n)\qqw$, differing by a
factor $2\qqw$ from the one in \refT{Toi}.

Finite-dimensional convergence is proved as in \refL{Lfd}.
The main difference is that the probability of the last event $\cA_{m+1}$ is
estimated using \refC{Cbingo0} instead of \refC{Cbingo2}, and that we define
$T_i:=t_i^{1-2\ga}$, where now $t_i\in(0,\infty)$. Then the same calculations
as before show that \eqref{krhmf} holds, which yields finite-dimensional
convergence in \eqref{too}.

Stochastic boundedness on any interval $[a,b]$ with $0<a<b<\infty$ follows
as in \refL{LSB}, again using \refC{Cbingo0}.

The convergence \eqref{too} in the space $\cD[a,\infty)$ for any $a>0$ now
follows as in \refL{La1}, again using \refL{LT}.
This is equivalent to convergence in $\cD(0,\infty)$.

Finally, the analogue of \refL{LSB0} holds, by the same proof with trivial
modifications, which yields convergence also in $\cD\ooo$.
\end{proof}

\begin{proof}[Proof of \refT{thm:oulimit}]
  Apply \refT{Too} with $m_n:=e^{t_n}$. Then \eqref{too} holds in $\Doo$,
  and thus in $\cD(0,\infty)$. The change of variables $t=e^s$ yields
  \begin{equation}\label{ptou2}
e^{-t_n/2} \gDp{\floor{e^{s+t_n}}}
\dto (2\ga-1)\qqw e^{(1-\ga)s} B\bigpar{e^{(2\ga-1)s}},
  \end{equation}
in $\cD(-\infty,\infty)$.
Multiplying \eqref{ptou2} by the continuous function $e^{-s/2}$ yields 
  \begin{equation}\label{ptou3}
e^{-(s+t_n)/2} \gDp{\floor{e^{s+t_n}}}
\dto Z(s):=(2\ga-1)\qqw e^{(\frac12-\ga)s} B\bigpar{e^{(2\ga-1)s}},
  \end{equation}
in $\cD(-\infty,\infty)$, which is \eqref{tou1}.
The covariance \eqref{tou2} follows from the definition of $Z(s)$ in
\eqref{ptou3}. 
\end{proof}

\appendix

\section{Quadratic functionals of Gaussian variables}\label{App}

A \emph{Gaussian Hilbert space} is a closed subspace $H$
of $L^2\OFP$, for some probability space $\OFP$,
such that every element $f$ of $H$ is a  random variable with a
centered Gaussian distribution $N(0,\gss)$ (where $\gss=\norm{f}_2^2$).
(In this appendix, we consider real vector spaces and real-valued
functions; thus $L^2=L^2_\bbR$ is the space of real-valued square
integrable functions.)
We review here, for the readers' and our own convenience, 
some basic and more or less well-known facts; 
further details can be found \eg{} in 
\cite{SJIII}.
In our application above, $H$ is the closed linear span of 
$\set{B_t:0\le  t\le n}$, 
as usual defined on some anonymous probability space $\OFP$,
but we state the results generally.

If $\xi,\eta\in H$, define their \emph{Wick product} by
$\wick{\xi\eta}:=\xi\eta-\E(\xi\eta)$, i.e., the centered product.
Let $\HH$ be the closed linear span of all Wick products $\wick{\xi\eta}$,
$\xi,\eta\in H$. 
Hence, if $X=Q(\xi_1,\dots,\xi_N)$ is a quadratic form in
Gaussian random variables $\xi_1,\dots,\xi_N\in H$, then $X-\E X\in \HH$,
and conversely, every element of $\HH$ is a limit of such centered quadratic
forms, and can be written as a quadratic form in (in general) infinitely
many variables. Moreover, this form can be diagonalized by a suitable choice
of orthonormal basis in $H$, leading to the following representation
theorem. 
(Note that every $X\in\HH$ has $\E X=0$ as a consequence of the definition.)

\begin{theorem}[{\cite[Theorem 6.1]{SJIII}}]\label{T61}
If $X\in\HH$, then there exists a finite or infinite sequence
$(\gl_j)_{j=1}^N$, $0\le N\le \infty$, of non-zero real numbers
such that $\sum_j\gl_j^2<\infty$ and
\begin{equation}\label{t61}
  X\eqd \sum_{j=1}^N \gl_j\bigpar{\xi_j^2-1},
\end{equation}
where $\xi_j$ are \iid{} $N(0,1)$ random variables.
The numbers $\gl_j$ are the non-zero eigenvalues (counted with
multiplicities) of the compact symmetric bilinear form
\begin{equation}\label{Bform}
  \BB_X(\xi,\eta):=\tfrac12\E(X\xi\eta),
\qquad \xi,\eta\in H,
\end{equation}
or, equivalently, of the corresponding compact symmetric operator $\xB_X$ on
$H$ defined by $\innprod{\xB_X(\xi),\eta}=\BB_X(\xi,\eta)=\tfrac12\E(X\xi\eta)$,
$ \xi,\eta\in H$.
\end{theorem}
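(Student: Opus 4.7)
The plan is to diagonalize the operator $\xB_X$ via the spectral theorem for compact self-adjoint operators on the Hilbert space $H$, and read off the representation \eqref{t61} directly from its eigenexpansion. Concretely, I will first verify that $\xB_X$ is Hilbert--Schmidt and symmetric; then apply the spectral theorem to obtain an orthonormal basis of $H$ consisting of eigenvectors of $\xB_X$; and finally show that the quadratic form built from that eigenexpansion coincides with $X$, using that $X\mapsto\xB_X$ is injective on $\HH$.

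Fix any orthonormal basis $\{f_i\}$ of $H$. A standard Wick (Isserlis) calculation shows that $\{f_if_j:i<j\}\cup\{f_i^2-1:i\}$ is a total orthogonal family in $\HH$ with $L^2$-norms $1$ and $\sqrt{2}$, respectively, so every $X\in\HH$ has an expansion
\begin{equation*}
X=\sum_{i<j}a_{ij}f_if_j+\sum_i b_i(f_i^2-1),
\qquad
\norm{X}_2^2=\sum_{i<j}a_{ij}^2+2\sum_i b_i^2.
\end{equation*}
The same moment formula gives $\BB_X(f_i,f_j)=a_{ij}/2$ for $i\neq j$ and $\BB_X(f_i,f_i)=b_i$, whence
\begin{equation*}
\sum_{i,j}\BB_X(f_i,f_j)^2=\tfrac12\E(X^2)<\infty.
\end{equation*}
Thus $\xB_X$ is Hilbert--Schmidt (hence compact), and symmetric by inspection of \eqref{Bform}.

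By the spectral theorem, $H$ admits an orthonormal basis $\{e_j\}$ of eigenvectors of $\xB_X$; enumerate the nonzero eigenvalues (with multiplicities) as $(\gl_j)_{j=1}^N$, $0\le N\le\infty$, so that $\sum_j\gl_j^2=\norm{\xB_X}_{HS}^2=\tfrac12\E(X^2)<\infty$. Orthonormality inside the Gaussian Hilbert space $H$ forces the $e_j$ to be \iid{} $N(0,1)$, so
\begin{equation*}
Y:=\sum_{j}\gl_j(e_j^2-1)
\end{equation*}
converges in $L^2$ (the summands are orthogonal with $\norm{\gl_j(e_j^2-1)}_2^2=2\gl_j^2$), and $Y\in\HH$.

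It remains to show $X=Y$. The linear map $X\mapsto\xB_X$ is injective on $\HH$: if $\xB_X=0$ then $\E(X\xi\eta)=0$ for all $\xi,\eta\in H$, and since $\E X=0$ this gives $\E(X\wick{\xi\eta})=0$ for all $\xi,\eta\in H$, so $X\perp\HH$; combined with $X\in\HH$ this forces $X=0$. So it suffices to compute $\xB_Y$ and check it agrees with $\xB_X$ on the eigenbasis. The identity $\E\bigpar{(e_k^2-1)e_ie_j}=2\delta_{ik}\delta_{jk}$ yields $\BB_Y(e_i,e_j)=\gl_i\delta_{ij}$, so the $e_j$ diagonalize $\xB_Y$ with the same eigenvalues $\gl_j$ as $\xB_X$. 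Hence $\xB_X=\xB_Y$, so $X=Y$ almost surely, yielding \eqref{t61}. The main obstacle is the bookkeeping around the Wick basis in the second paragraph; once that is in hand, the remainder is abstract spectral theory combined with the injectivity of $X\mapsto\xB_X$.
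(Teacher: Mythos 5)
Note that the paper does not prove this result; it cites it as \cite[Theorem 6.1]{SJIII} and uses it as a black box. Your proof is correct and is essentially the standard spectral-theoretic argument for diagonalizing the second Wiener chaos, as in the cited reference: identify $\HH$ with symmetric Hilbert--Schmidt operators on $H$ via the Wick/Isserlis basis $\{f_if_j\}_{i<j}\cup\{\wick{f_i^2}\}$, apply the spectral theorem for compact self-adjoint operators to $\xB_X$, use that an orthonormal family in a Gaussian Hilbert space is \iid{} $N(0,1)$, and close the loop via injectivity of $X\mapsto\xB_X$ on $\HH$ (which requires $\E X=0$, valid since Wick products are centered). One small remark: you actually prove almost-sure equality $X=\sum_j\gl_j(e_j^2-1)$, not merely equality in distribution, which is a slightly stronger conclusion than the statement requires.
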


In particular, \eqref{t61} yields the moment generating function,
for all real $t$ such that $2\gl_j t<1$ for every $j$,
\begin{equation}\label{mgf1}
  \E e^{tX}=\prod_j \bigpar{1-2\gl_j t}\qqw e^{-\gl_j t}.
\end{equation}

In our application, we deal with non-centered quadratic functionals, and
then the following version is more directly applicable.
(Cf.\ the more general but less specific \cite[Theorem 6.2]{SJIII}.
We do not know a reference for the precise statements in
\refT{T+}, so we give a complete proof.)

\begin{theorem}\label{T+}
Suppose that 
\begin{romenumerate}
\item \label{T+a}
$X$ is a random variable such that $X-\E X\in\HH$;
\item \label{T+b}
$X\ge 0$ a.s., and $\P(X<\eps)>0$ for every $\eps>0$
(i.e., the lower bound of the support of $X$ is $0$);
\item \label{T+c}
the bilinear form $\BB=\BB_{X-\E X}$
is positive, i.e.,
\begin{equation}
\BB_{X-\E X}(\xi,\xi)
= \tfrac12\E \bigpar{(X-\E X)\xi^2}
\ge0  
\end{equation}
for every $\xi\in H$.
\end{romenumerate}
Then 
\begin{equation}\label{tab}
  X\eqd \sum_{j=1}^N \gl_j\xi_j^2,
\end{equation}
where $\xi_j$ are \iid{} $N(0,1)$ random variables and
the coefficients $\gl_j>0$ are the non-zero eigenvalues (counted with
multiplicities) of $\BB$.
Furthermore,\begin{equation}\label{trace}
 \E X= \sum_{j=1}^N \gl_j<\infty
\end{equation}
and, for  $-\infty<t<(2 \max_j\gl_j)\qw$,
\begin{equation}\label{mgf+}
  \E e^{tX}=\prod_j \bigpar{1-2\gl_j t}\qqw .
\end{equation}
\end{theorem}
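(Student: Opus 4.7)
The natural plan is to reduce \refT{T+} to \refT{T61} applied to the centered variable $Y := X - \E X$, which by (i) lies in $\HH$. That result gives
\begin{equation*}
Y \eqd \sum_j \mu_j(\xi_j^2 - 1),
\end{equation*}
with \iid{} $N(0,1)$ variables $\xi_j$, where the $\mu_j$ are the non-zero eigenvalues of $\BB$ and $\sum_j \mu_j^2 < \infty$. Hypothesis (iii) forces every $\mu_j > 0$. If in addition we can show that $\sum_j \mu_j < \infty$, then the rewriting
\begin{equation*}
X \eqd c + \sum_j \mu_j \xi_j^2, \qquad c := \E X - \sum_j \mu_j,
\end{equation*}
is legitimate, and \eqref{tab}, \eqref{trace}, and \eqref{mgf+} reduce to showing $c = 0$ and computing a Gaussian Laplace transform.

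The main obstacle is the summability of the $\mu_j$, not just the square-summability that \refT{T61} provides. I argue by contradiction: suppose $\sum_j \mu_j = \infty$ and for each $N$ split $Y = Y_N + Z_N$ with $Y_N := \sum_{j \le N}\mu_j(\xi_j^2 - 1)$. Since $\E Z_N = 0$ and $\Var Z_N \le 2\sum_j \mu_j^2$ uniformly in $N$, Chebyshev yields a constant $M$ with $\Prx{Z_N \le M} \ge 3/4$ for every $N$. Independently, the event $\{|\xi_j| \le 1/2 \text{ for all } j \le N\}$ has positive probability and on it $Y_N \le -\tfrac34 \sum_{j \le N}\mu_j$. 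Choose $N$ so large that $\tfrac34\sum_{j \le N}\mu_j > \E X + M$; on the intersection of these two independent events, which has positive probability, $Y < -\E X$, so $X = \E X + Y < 0$ with positive probability, contradicting (ii). Hence $\sum_j \mu_j < \infty$.

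It then remains to identify $c$. Set $S := \sum_j \mu_j \xi_j^2 \ge 0$; a short argument (Markov on the tail, plus positive probability that the finite head is small) shows $\Prx{S < \eps} > 0$ for every $\eps > 0$. Since $c + S$ has the same distribution as $X$ and $X \ge 0$ \as{}, we have $c + S \ge 0$ \as{}; letting $S$ be small forces $c \ge 0$. Conversely $X \ge c$ \as{}, so the condition in (ii) that $\Prx{X < \eps} > 0$ for every $\eps > 0$ forces $c \le 0$. Hence $c = 0$, which gives \eqref{tab} and $\E X = \sum_j \mu_j$, i.e.\ \eqref{trace}. Finally \eqref{mgf+} is routine: by independence and the standard identity $\E e^{t\mu_j\xi_j^2} = (1-2\mu_j t)^{-1/2}$, we obtain $\E e^{tX} = \prod_j (1 - 2\mu_j t)^{-1/2}$ whenever $2\mu_j t < 1$ for all $j$, the infinite product being justified by monotone convergence for $t > 0$ and dominated convergence for $t < 0$.
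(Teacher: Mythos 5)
Your proof is correct, but it takes a genuinely different route from the paper's. Both proofs begin the same way, by applying \refT{T61} to $X-\E X$ and observing that positivity of $\BB$ forces $\gl_j>0$. The key issue in both is to upgrade the square-summability $\sum_j\gl_j^2<\infty$ coming from \refT{T61} to summability $\sum_j\gl_j<\infty$, and to identify $\E X=\sum_j\gl_j$. The paper does this in a single analytic stroke: starting from the Wick-form MGF identity \eqref{mgf1} with $t$ replaced by $-t>0$, it divides by $t$ and lets $t\to\infty$. Hypothesis \ref{T+b} makes the left side $\frac1t\ln\E e^{-t(X-\E X)}\to\E X$ (since $X\ge0$ gives the upper bound and $\P(X<\eps)>0$ gives the lower bound), while on the right side each summand $(t\gl_j-\tfrac12\ln(1+2\gl_j t))/t$ increases monotonically to $\gl_j$ by concavity of $\ln$, so monotone convergence yields $\sum_j\gl_j$. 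This gives finiteness of the trace and $\E X=\sum_j\gl_j$ simultaneously. You instead argue probabilistically in two steps: first you derive a contradiction with $X\ge0$ a.s.\ by exhibiting an explicit positive-probability event on which the truncated partial sum is very negative while the tail is bounded by Chebyshev (using square-summability); then you pin down the additive constant $c=\E X-\sum_j\gl_j$ by separately showing $c\ge0$ (from $X\ge0$ a.s.\ and $\P(S<\eps)>0$) and $c\le0$ (from $\P(X<\eps)>0$ and $X\ge c$ a.s.). Both arguments use hypothesis \ref{T+b} crucially but in different ways: the paper exploits it to compute a limit of a Laplace transform, you exploit it twice, once to rule out $\sum\gl_j=\infty$ and once to kill $c$. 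Your approach is more elementary and self-contained (it does not rely on the MGF identity \eqref{mgf1} for the trace identity, only for the final formula \eqref{mgf+}); the paper's argument is shorter and unifies the two conclusions.
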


\begin{proof}
  \refT{T61} yields the representation
  \begin{equation}\label{a7}
X-\E X = \sum_j \gl_j\bigpar{\xi_j^2-1},
  \end{equation}
where $\gl_j>0$ since the positive form $\BB$ has only non-negative
eigenvalues. Hence, \eqref{mgf1} applies for any $t<0$, and thus, replacing
$t$ by $-t$, for every $t>0$,
\begin{equation}\label{a8}
  \frac1t\ln\E e^{-t(X-\E X)}
=\sum_j \frac{t\gl_j-\frac12\ln(1+2\gl_j t)}{t}.
\end{equation}
Now let $t\to\infty$. By \ref{T+b}, $\E e^{-tX}\le1$, but
$\liminf_{\ttoo}\frac1t\ln \E e^{-tX}\ge -\eps$ for every $\eps>0$, and thus 
$\frac1t\ln \E e^{-tX}\to0$ and $\frac1t\ln \E e^{-t(X-\E X)}\to \E X$.
In the sum on the \rhs{} of \eqref{a8}, each term is positive, and increases
to $\gl_j$ as $\ttoo$ (because $\ln$ is concave);
hence the sum tends to $\sum_j\gl_j$ by monotone convergence.
Consequently, $\E X=\sum_j\gl_j$, and since $\E X<\infty$, \eqref{trace} holds.

The representation \eqref{tab} follows from \eqref{a7} and \eqref{trace},
and \eqref{mgf+} is an immediate consequence.
\end{proof}

\begin{remark}\label{Rcompact}
The operator $\xB$  in \refT{T61} is a Hilbert--Schmidt operator, since
$\sum_j\gl_j^2<\infty$. Similarly, in \refT{T+}, $\xB$ is a trace class
operator, with trace and trace norm $\sum_j\gl_j=\E X$.
\end{remark}

If $Y\in H$, then $X:=Y^2$ satisfies \ref{T+a} and \ref{T+b} in \refT{T+}.
Furthermore, for $\xi,\eta\in H$, since $X-\E X= Y^2-\E Y^2=\wick{Y^2}$,
by \cite[Theorem 3.9]{SJIII},
\begin{equation}\label{a10}
  \BB_{X-\E X}(\xi,\eta)=\tfrac12\E \bigpar{\wick{Y^2}\xi\eta}
=\tfrac12\E \bigpar{\wick{Y^2}\wick{\xi\eta}} 
=\E (Y\xi)\E (Y\eta).
\end{equation}
Taking $\eta=\xi$, we see that  $\BB$ is a positive form.
Hence the conditions \ref{T+a}--\ref{T+c} hold for $X=Y^2$, and it follows
that they hold also for any finite sum of squares $\sum_i Y_i^2$ with
$Y_i\in H$,  
for example $\tL_n$ in \eqref{tL}. Moreover, we can take limits, and
conclude that the conditions also hold for, for example, the integral
$\hL_n$ in \eqref{hL}. (Note that \ref{T+b} is obvious for $\hL_n$.)
Hence, \refT{T+} applies to $\hL_n$.

\subsection{Stochastic integration}

In order to find the eigenvalues $\gl_i$ in Theorem  \ref{T+}
in our application to $\hL_n$,
it is convenient to transfer from the Gaussian Hilbert space $H$ to the
function space $L^2\ono$ by means of stochastic integrals. 

The stochastic integral $\inton f(t)\dd B_t$ can be defined for every
(deterministic) function $f\in L^2\ono$ as follows.
(This is a simple form of stochastic integrals; we have no need for the
general theory of random integrands here.)

First, $\inton \etta_{(0,a)}(t)\dd B_t=B_a$ for every $a\in[0,n]$.
This and linearity defines $\inton f(t)\dd B_t$ for every step function $f$
(in the obvious, naive way). A simple calculation shows that 
\begin{equation}
  \E\Bigpar{\inton f(t)\dd B_t}^2 = \inton f(t)^2\dd t.
\end{equation}
Hence, 
the mapping
$I:f\mapsto \int f\dd B_t$ is an isometry from the subspace of step functions
in $L^2\ono$ to the linear span of the random variables $B_t$, $t\in\on$.
We let $H$ be the closure of the latter space, regarded as a subspace of
$L^2\OFP$; then $I$ extends by continuity to an isometry $I: L^2\ono\to H$.
We may write $I(f)=\inton f(t)\dd B_t$.
This isometry enables us to regard the bilinear form $\BB$ and operator $\xB$
as defined on $L^2\ono$.

If $Y=I(f)$, $\xi=I(g)$ and $\eta=I(h)$, for some $f,g,h\in L^2\ono$, 
and $X=Y^2$, then \eqref{a10} yields
\begin{equation}
  \BB(\xi,\eta)=\E(Y\xi)\E(Y\eta)=\innprod{f,g}\innprod{f,h}
=\inton\inton f(x)f(y)g(x)h(y)\dd x\dd y,
\end{equation}
which shows that $\xB$, regarded as an operator on $L^2\ono$, 
is the integral operator with kernel $f(x)f(y)$.

\begin{example}\label{EhL}
  For any $t\in\on$, $B_t=I(\etta_{(0,t)})$, and thus
$X=B_t^2$ corresponds to the integral operator $\xB$ with kernel
$\etta_{(0,t)}(x)\etta_{(0,t)}(y)$.
It follows easily 
that $\hL_n=\int_1^n \bigpar{B_t^2/t^2} \dd t$ corresponds to
the integral operator with kernel
\begin{equation}\label{kernel}
K_n(x,y):=\int_1^n \frac{1}{t^2}\etta_{(0,t)}(x)\etta_{(0,t)}(y) \dd t
=\int_{1\lor x\lor y}^n\frac{ \dd t}{t^2}
=\frac{1}{1\lor x\lor y}-\frac{1}{n}.
\end{equation}
\end{example}

We summarize as follows.
\begin{lemma}\label{LhL}
\refT{T+} applies to $X=\hL_n$, with
$\gl_j$ the non-zero eigenvalues of the symmetric integral operator on
$L^2\ono$ with kernel $K_n$ given by \eqref{kernel}.
\end{lemma}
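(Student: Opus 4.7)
The proof is largely an assembly of pieces already developed: the abstract criterion \refT{T+}, the formula \eqref{a10} identifying the bilinear form attached to a squared Gaussian, and the stochastic integral isometry introduced just before \refE{EhL}. My plan is to first verify the three hypotheses \ref{T+a}--\ref{T+c} for $X=\hL_n$ by approximating $\hL_n$ by finite nonnegative linear combinations of squared Brownian values, and then to identify the resulting operator using the kernel computation of \refE{EhL}.

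For the verification I would write $\hL_n$ as the $L^2(\OFP)$-limit of Riemann sums $S_\pi:=\sum_k (t_k-t_{k-1})\,t_k^{-2}B_{t_k}^2$ along partitions $\pi$ of $[1,n]$; convergence in $L^2$ follows from a.s.\ continuity of $t\mapsto B_t^2/t^2$ on $[1,n]$ together with the uniform $L^2$-bound $\E\bigpar{B_t^2/t^2}^2=3/t^2\le3$. Each $S_\pi$ is a nonnegative linear combination of squares $Y^2$ with $Y=I(\etta_{(0,t_k)})\in H$, so the discussion after \eqref{a10} yields \ref{T+a}--\ref{T+c} for $S_\pi$. I would then argue that these properties pass to the limit: \ref{T+a} because $\HH$ is closed in $L^2(\OFP)$ by definition; \ref{T+b} because $\hL_n\ge0$ pointwise, and on the positive-probability event $\set{\sup_{t\le n}|B_t|\le\delta}$ one has $\hL_n\le\delta^2\int_1^n t^{-2}\dd t<\delta^2$, so $\P(\hL_n<\eps)>0$ for every $\eps>0$; and \ref{T+c} because for fixed $\xi\in H$ the map $X\mapsto\BB_{X-\E X}(\xi,\xi)=\tfrac12\E\bigpar{(X-\E X)\xi^2}$ is continuous on $L^2(\OFP)$ (since $\xi^2\in L^2(\OFP)$ as $\xi$ is Gaussian), so pointwise nonnegativity of the forms associated with $S_\pi$ is preserved.

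To identify the operator $\xB=\xB_{\hL_n}$ I would transfer via the isometry $I:L^2\ono\to H$. Formula \eqref{a10} with $Y=B_t=I(\etta_{(0,t)})$ shows that the operator attached to $B_t^2$ is the rank-one integral operator with kernel $\etta_{(0,t)}(x)\etta_{(0,t)}(y)$; linearity and continuity of $X\mapsto\BB_X$ on $L^2(\OFP)$ allow us to integrate this expression against $t^{-2}\dd t$ over $[1,n]$ and obtain \eqref{kernel}, as already computed in \refE{EhL}. The only delicate step, and the one I would check carefully, is justifying this interchange of the $t$-integral with the operator identification; by Fubini and the continuity of the linear functional $X\mapsto\E(X\xi\eta)$ on $L^2(\OFP)$ for each fixed $\xi,\eta\in H$, the kernel of $\xB_{\hL_n}$ is indeed the integral of the rank-one kernels, giving $K_n$ as claimed.
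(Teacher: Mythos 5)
Your proposal is correct and follows the same route the paper takes: verify the hypotheses of \refT{T+} by reducing to squares $Y^2$ with $Y\in H$ via the discussion around \eqref{a10}, pass to the limit from finite nonnegative combinations to the integral $\hL_n$, and identify the operator via the isometry $I$ and the kernel calculation of \refE{EhL}. The paper is terser — it simply says ``we can take limits'' — whereas you spell out the Riemann-sum approximation and the three limiting arguments explicitly, but the underlying argument is the same.
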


\end{document}